\definecolor{darkblue}{rgb}{0.1,0.1,0.7}
\def\cyril#1{\color{darkgreen}{ [CL: #1] }\color{black}}
\definecolor{darkgreen}{rgb}{0.1,0.7,0.1}
\def\fabio#1{\color{darkred}{ [FT: #1] }\color{black}}
\definecolor{darkred}{rgb}{0.7,0.1,0.1}
\numberwithin{equation}{section}
\def\Lip{\textsl{Lip}}
\newcommand{\ttv}{\mathtt v}
\newcommand{\ttw}{\mathtt w}
\newcommand{\ttu}{\mathtt u}
\newcommand{\tu}{\mathtt u}
\newcommand{\tv}{\mathtt v}
\newcommand{\Tw}{\mathtt w}
\newcommand{\bbC}{\mathbb{C}}
\newcommand{\bbD}{\mathbb{D}}
\newcommand{\bbE}{\mathbb{E}}
\newcommand{\bbN}{\mathbb{N}}
\newcommand{\bbP}{\mathbb{P}}
\newcommand{\bbR}{\mathbb{R}}
\newcommand{\bbZ}{\mathbb{Z}}
\newcommand{\cC}{\mathcal{C}}
\newcommand{\cD}{\mathcal{D}}
\newcommand{\cE}{\mathcal{E}}
\newcommand{\cF}{\mathcal{F}}
\newcommand{\cJ}{\mathcal{J}}
\newcommand{\cL}{\mathcal{L}}
\newcommand{\cN}{\mathcal{N}}
\newcommand{\cR}{\mathcal{R}}
\newcommand{\cS}{\mathcal{S}}
\newcommand{\R}{\mathbb{R}}
\newcommand{\C}{\mathbb{C}}
\newcommand{\N}{\mathbb{N}}
\newcommand{\E}{\mathbb{E}}
\renewcommand{\P}{\mathbb{P}}
\DeclareMathOperator{\Var}{Var}
\DeclareMathOperator{\Cov}{Cov}
\DeclareMathOperator{\EE}{E}
\newtheorem{theorem}{Theorem}[section]
\newtheorem{remark}[theorem]{Remark}
\newtheorem{assumption}[theorem]{Assumption}
\newtheorem{lemma}[theorem]{Lemma}
\newtheorem{proposition}[theorem]{Proposition}
  \crefname{theorem}{Theorem}{Theorems}
  \crefname{lemma}{Lemma}{Lemmas}
  \crefname{remark}{Remark}{Remarks}
  \crefname{proposition}{Proposition}{Propositions}
\crefname{notation}{Notation}{Notations}
\crefname{claim}{Claim}{Claims}
  \crefname{definition}{Definition}{Definitions}
  \crefname{corollary}{Corollary}{Corollaries}
  \crefname{section}{Section}{Sections}
  \crefname{figure}{Figure}{Figures}
    \crefname{assumption}{Assumption}{Assumptions}
\def\tun{\mathbf{1}} 
\newcommand{\mBB}{m_{\text{\tiny{bridge}}}}
\newcommand{\mBE}{m_{\text{\tiny{exc}}}}
\renewcommand{\[}{\begin{equation}}
\renewcommand{\]}{\end{equation}}
\title{Convergence of dynamical stationary fluctuations}
\author{Cyril Labb\'e}
\address{Universit\'e Paris Cit\'e, Laboratoire de Probabilit\'es, Statistique et Mod\'elisation, UMR 8001, F-75205 Paris, France and Institut Universitaire de France (IUF).}
\email{clabbe@lpsm.paris}
\author{Beno\^it Laslier}
\address{Universit\'e Paris Cit\'e, Laboratoire de Probabilit\'es, Statistique et Mod\'elisation, UMR 8001, F-75205 Paris, France}
\email{laslier@lpsm.paris}
\author{Fabio Toninelli}
\address{TU Wien, Faculty of Mathematics and Geoinformation, Wiedner Hauptstra\ss e 8-10, A-1040 Vienna, Austria}
\email{fabio.toninelli@tuwien.ac.at}
\author{Lorenzo Zambotti}
\address{Sorbonne Universit\'e, Laboratoire de Probabilit\'es, Statistique et Mod\'elisation, UMR 8001, F-75205 Paris, France}
\email{zambotti@lpsm.paris}
\date{}
\begin{document}
	
\maketitle

\begin{abstract}
  We present a general \emph{black box} theorem that ensures
  convergence of a sequence of stationary Markov processes, provided a
  few assumptions are satisfied. This theorem relies on a control of
  the resolvents of the sequence of Markov processes, and on a
  suitable characterization of the resolvents of the limit. One major
  advantage of this approach is that it circumvents the use of the
  Boltzmann-Gibbs principle: for instance, we deduce in a rather
  simple way that the stationary fluctuations of the one-dimensional
  zero-range process converge to the stochastic heat equation. More importantly, it
  allows to establish results that were probably out of reach of
  existing methods: using the black box result, we are able to prove
  that the stationary fluctuations of a discrete model of ordered
  interfaces, that was considered previously in the statistical
  physics literature, converge to a system of reflected stochastic
  PDEs.
	
	\noindent
	{\it AMS 2010 subject classifications}: Primary 60H15, 60K35; Secondary 46N55. \\
	\noindent
	{\it Keywords}: {Markov processes; Resolvent; Integration by parts; Reflection; Stochastic heat equation; statistical physics.}
\end{abstract}

\setcounter{tocdepth}{1}
\tableofcontents

\section{Introduction}

This work deals with scaling limits of stationary fluctuations of Markov processes. We typically think of statistical physics models such as particle systems (exclusion process, zero-range process), evolving interfaces ($\nabla \phi$ interfaces, height functions) or dimer models, and the limiting process is usually given by a stochastic PDE. There exists by now a large literature on this topic. However, establishing rigorously such a convergence result often requires a delicate estimate, the so-called Boltzmann-Gibbs principle, that allows to replace space-time averages of discrete observables by a function of the local density up to some negligible error. We refer to~\cite[Sec 11.1]{KipLan} for more details on the Boltzmann-Gibbs principle in a reversible setting, ~\cite{ChangLandimOlla} in a non-reversible setting and also~\cite{GoncalvesJara} for an extension called a second order Boltzmann-Gibbs principle; let us mention also
that the Boltzmann-Gibbs principle, together with the relative entropy method, plays an important role for the
proof of convergence of \emph{non-equilibrium fluctuations} in some interacting particle systems \cite{JM}.
Moreover, there are some interesting models from statistical physics for which the scaling limit of the stationary fluctuations is still out of reach of existing methods.\\

In this article, we present a general theorem, dubbed ``black box theorem'' in the sequel, that ensures convergence of a sequence of stationary Markov processes towards some limiting stationary Markov process, provided a set of five assumptions are fulfilled. Having in mind the models of statistical physics alluded to above, this theorem is designed to circumvent the use of the Boltzmann-Gibbs principle. As we will see, this theorem not only provides simpler proof for existing convergence results (see the example of the zero-range process below) but it also allows to prove new convergence results (see the pair of ordered interfaces below).\\
Our black box theorem relies on a change of perspective on how one characterizes the limiting process. In classical methods, the characterization of the limit goes through a martingale problem and the Boltzmann-Gibbs principle is used for computing the quadratic variation of the scaling limits of the discrete martingales. In our approach, we instead require a characterization of the limit through an identity satisfied by its resolvents, see Assumption \ref{Ass:Charac} below.\\
The remaining four assumptions concern the sequence of stationary Markov processes. We naturally require the sequence of invariant measures to converge to the invariant measure of the limit, Assumption \ref{Ass:State}, and we ask the sequence of Markov processes to be tight, Assumption \ref{Ass:Tight}. The last two assumptions deal specifically with the resolvents of the sequence of Markov processes. In Assumption \ref{Ass:Equi}, we require the sequence of resolvents to be equicontinuous in order to extract converging subsequences. Finally, Assumption \ref{Ass:IbPF} imposes that the sequence of resolvents are characterized by an identity that is asymptotically the same as the identity satisfied by the resolvents of the limit.\\

Our black box theorem finds its origin in an article of the fourth author~\cite{Zam04} on the convergence of the stationary fluctuations of a $\nabla\phi$  interface model with reflection towards the solution of a reflected SPDE introduced by Nualart and Pardoux~\cite{NualartPardoux92}. This convergence had been established by Funaki and Olla \cite{FunaOlla} using the Boltzmann-Gibbs principle: the proof in~\cite{Zam04} allowed to circumvent this delicate step. Later this technique was turned into a "black box" result in \cite{MR2267701},
however this approach was restricted to continuous-in-time approximations, while a paper of the first author \cite{EthLab} proved that the convergence result of \cite{Zam04} was true also for discrete approximations. The present work generalises both techniques to a much
more general setting that encompasses both discrete and continuous state-spaces.\\

In the present paper, we apply our general result to three models of statistical physics. We first consider in Section \ref{sec:0} the one-dimensional zero-range process on a segment and show how our approach circumvents the use of the delicate Boltzmann-Gibbs principle, and simply requires some standard estimates under the invariant measure of the discrete dynamics. We emphasize that the result for the zero-range process is not new (and that it holds under weaker conditions on the transition rates than those we assume below) and that we present it here for pedagogical reasons, that is, to first illustrate our method in a simple context.\\

The second and main application of our general result concerns an
evolving pair of ordered lattice paths $\ttv \ge \ttw$ on $[0,2N]$,
that are bound to the height $0$ at $0$ and $2N$ and evolve according
to a corner-flip dynamics: the corners of $\ttv$ and $\ttw$ flip at
some prescribed rates, except if the flip breaks the ordering of the
two interfaces. This last constraint gives rise to a pair of
\emph{reflection measures} in the evolution equations: one that pushes
up $\ttv$ and one that pushes down $\ttw$. Note that this process is a
particular instance of a model introduced previously in
\cite{luby2001markov,wilson2004mixing} in the context of dynamics of
planar structures and tilings. At a higher level, our
  motivation to look at this process is the aim to understand the scaling limit of the
  evolution of \emph{two-dimensional} random surfaces. The Markov chain
  on a pair of ordered lattice paths is then a toy model where the
  paths represent the level lines of the height function, and only two
  level lines are present. See the end of this section, as well as
  Remark \ref{rem:k>2}, for a natural generalization with an arbitrary
  number $k$ of interacting level lines.

\begin{figure}\centering
	\begin{tikzpicture}[xscale=0.7, yscale=0.7, >=stealth]\label{Fig}
		\draw[-,thin,color=black] (0,0) -> (14,0);

		\draw[-,thick,color=black] (0,0) node[below left] {$0$} -- (1,1) -- (2,0.08) -- (3,-0.92) -- (4,0.08) -- (5,1) -- (6,2) -- (7,1) -- (8,0.08) -- (9,1.08) -- (10,0.08) -- (11,1) -- (12,2) -- (13,1) -- (14,0) node[below right] {$2N$};
		\draw[-,thick,color=black] (0,0)  -- (1,-1) -- (2,0) -- (3,-1) -- (4,0) -- (5,-1) -- (6,-2) -- (7,-1) -- (8,0) -- (9,1) -- (10,0) -- (11,-1) -- (12,-2) -- (13,-1) -- (14,0) node[below right] {$2N$};
		
		%
		%
		%
		%

	\end{tikzpicture}
	\caption{Example of a pair of reflected interfaces: $k=3$ and $k=9$ are \emph{contact points}. At contact points, a reflection term prevents the jumps that would break the ordering to occur.}
\end{figure}
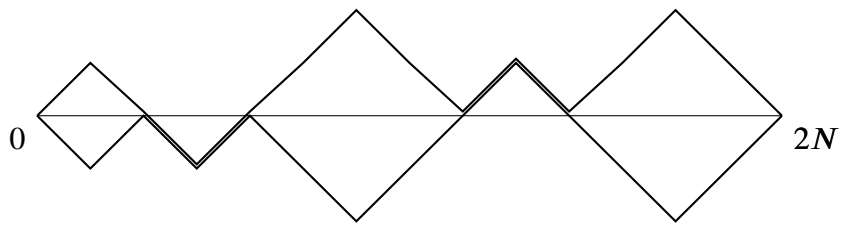

In Section \ref{Sec:Reflected}, we will show that the stationary fluctuations of this model of reflected interfaces, under a diffusive rescaling, converge to the stationary solution of the following system of stochastic PDEs driven by independent space-time white noises $\xi^v$, $\xi^w$
\begin{align}
	\begin{cases}
		\partial_t v = \frac12\partial^2_x v + \xi^v + \frac1{\sqrt 2}\eta\;,\quad x\in [0,1]\;,\quad t\ge 0\;,\\
		\partial_t w = \frac12\partial^2_x w + \xi^w - \frac1{\sqrt 2}\eta\;,\quad x\in [0,1]\;,\quad t\ge 0\;,\\
		v\ge w\;,\quad \eta(dt,dx)\ge 0\;,\quad \int_{\R_+ \times (0,1)} (v-w)(t,x) \eta(dt,dx) = 0\;,
	\end{cases}
\end{align}
In these equations, $\eta$ is a random measure on $\R_+\times (0,1)$, carried by the space-time points where $v$ and $w$ meet, and whose role is to preserve the ordering $v\geq w$. This system of stochastic PDEs is intimately related to the reflected stochastic heat equation introduced by Nualart and Pardoux~\cite{NualartPardoux92}, see Subsection \ref{Sec:SPDEPair}. The main difficulty in the proof of the convergence of the discrete dynamics consists in showing that the two discrete reflection measures converge to the \emph{same} reflection measure $\eta$ in the limit: this question, for a very similar model, was left open in~\cite{EthLab}. Implementing the Boltzmann-Gibbs principle to prove such a statement seems quite delicate: instead, checking the assumptions necessary for our black box theorem to apply mainly boils down to obtaining some estimates under the invariant measure of the process.\\

Finally we state a convergence result for discrete-time approximations of
$\nabla\phi$ interfaces (with or  without a ``hard wall  constraint'' that imposes positivity of $\phi$) with convex potential in one dimension in Section \ref{sec:nablaphi}, although we do not work out all details. These models have relevance in statistical physics, as describing interface pinning/wetting phenomena in two-dimensions, see for instance the classic reference \cite{Fisher} as well as the monograph \cite{Giacominbook}.\\

Although the three examples presented above are reversible w.r.t.~their invariant measures, we emphasize that our black box result requires neither reversibility, nor the validity of the so-called  ``gradient condition'' \cite{KipLan}.

There is a number of directions that could be investigated in the future. First, our
proof of the convergence of an evolving pair of ordered interfaces could be
extended to cover systems of $k$ ordered interfaces for some arbitrary
but fixed value $k\ge 2$, see Remark \ref{rem:k>2} for more
details.
Second, a very challenging question would be to apply the
black box theorem (or some generalization thereof) to prove the
convergence of the fluctuations of the height function of the
dimer model on the honeycomb graph (or rhombus tilings of the plane), under the ``tower dynamic'' introduced in
\cite{luby2001markov,wilson2004mixing}, to a two-dimensional
stochastic heat equation with additive noise.  Informally speaking,
this tiling dynamics corresponds to a system of $k$ ordered interfaces
with $k$ itself of order $N$. The exact form of the limit stochastic heat equation can be guessed from the hydrodynamic limit equation obtained by the second and third authors in \cite{LT1,LT2}.

We emphasize that the rhombus-tiling tower-dynamic of
    \cite{wilson2004mixing} satisfies a certain ``gradient
    condition'', that comes together with a $L^1$ contraction property \cite[Sec. 3]{LT1}. This  feature allows for instance to prove a hydrodynamic
    limit \cite{LT1,LT2}, that is, the convergence of the height
    profile, under diffusive scaling, to the solution of a non-linear
    parabolic PDE. This is in contrast with the usual rhombus tilings ``single-flip''
    Glauber dynamic, for which no ``gradient
    condition'' or exact contraction property holds, so that even proving that the relaxation time
    scales diffusively is highly non-trivial (see \cite{CMTcmp,LTahp}
    for progress in this direction).

Third, it would be interesting to
apply our black-box result to non-reversible systems, in particular, weakly asymmetric particle systems whose
fluctuations converge to the KPZ equation.

\subsection*{Organization of the paper}
Section \ref{sec:ageneral} contains our ``black box'' theorem that,
under certain general assumptions, allows to obtain the scaling limit
of the fluctuation process of a sequence of Markov chains. The rest of
the work applies the theorem to concrete cases: the zero-range process
on a segment (Section \ref{sec:0}), the pair of reflected
interfaces (Section \ref{Sec:Reflected}) and $1$-dimensional $\nabla\phi$ interfaces with convex potentials (Section \ref{sec:nablaphi}). In the latter case, we do not work out the details.

\subsection*{Acknowledgements} The work of CL and LZ was partially funded by the ANR project Smooth ANR-22-CE40-0017. The work of BL was partially funded by the ANR project DIMERS  ANR-18-CE40-0033. The research of FT was funded in part by the Austrian Science Fund (FWF) 10.55776/F1002.  For open access purposes, the author FT  has applied a CC BY public copyright license to any author-accepted manuscript version arising from this submission. CL is supported by the Institut Universitaire de France.

\section{A black box theorem}
\label{sec:ageneral}
This section presents a general result that ensures convergence in law of a sequence of stationary Markov processes (typically on discrete state-spaces) towards some stationary Markov process (typically on a continuous state-space). It relies on five assumptions: the first assumption concerns the limit process and the four others concern the sequence of discrete processes.

\subsection*{Assumption on the limit}

We consider a càdlàg Markov process $(u(t))_{t\geq 0}$ taking values in a separable\footnote{We do not require $E$ to be a Polish space since completeness is not necessary for our black box theorem to hold. In concrete examples, we will actually consider spaces $E$ that are not complete.} metric space $E$, and we let $\P_x$ be the law of this process starting from $x\in E$. We assume that this Markov process admits a stationary (\emph{not necessarily reversible}) probability measure $m$, which has full support on $E$.\\

For any bounded measurable function $f:E\to \R$ and any $t\ge 0$ we set
$$ P_t f(x) := \E_x[f(u(t))]\;,\quad x\in E\;.$$
By density, $P_t$ can be extended into a bounded operator on $L^2(E,dm)$. We then assume that $(P_t,t\ge 0)$ is a strongly continuous semigroup of bounded operators on $L^2(E,dm)$. The resolvent operators $\cR_\lambda$, $\lambda > 0$ are defined as
$$ \cR_\lambda f := \int_0^\infty e^{-\lambda t} P_t f dt\;,\quad f\in L^2(E,dm)\;.$$
It is well-known that the law of the process $u$ is completely characterized by the knowledge of the action of the operators $\cR_\lambda$, $\lambda > 0$ on a large enough set of functions, see Subsection \ref{Subsec:Reduction} for more details.\\

%
%
%

We can now spell out our assumption, we refer the reader to the discussion below the statement for some explanations.
\begin{assumption}[Characterization of the limit]\label{Ass:Charac}
	There exists a collection of finite complex measures $\Sigma^{\psi}$ on $E$, indexed by $\psi \in \cC$ where $\cC$ is some set of bounded and continuous functions from $E$ to $\C$, such that the following holds. For any $\lambda > 0$ and any bounded Lipschitz function $f:E\to \R$, $\cR_\lambda f$ is the unique bounded Lipschitz function $F:E\to\R$ that satisfies the identity
	\begin{equation}\label{Eq:DirFormResolv}
			-\int_E F(u) \psi(u) \Sigma^{\psi}(du) + \lambda \int_E F(u) \psi(u) m(du) = \int_E f(u) \psi(u) m(du)\;,\quad \forall \psi \in \cC\;.
		\end{equation}
\end{assumption}

	Let us give some explanations on this assumption. An alternative definition of the resolvent operator can be given in terms of the generator $\cL$ of the process: $\cR_\lambda = (\lambda - \cL)^{-1}$. Provided the class $\cC$ is large enough, this leads to the weak characterization of $\cR_\lambda f$ as the unique element such that
	$$\langle (\lambda - \cL) \cR_\lambda f , \psi \rangle = \langle f, \psi \rangle$$
	for all $\psi \in \cC$. By duality, we obtain
	\[
	\forall \psi\in\cC \, \langle  \cR_\lambda f , (\lambda - \cL^*)\psi \rangle = \langle f, \psi \rangle \,.
	\]
	Comparing with equation \ref{Eq:DirFormResolv}, this is consistent if $\cL^* \psi(u) m(du) = \psi(u)  \Sigma^\psi(du)$. However we insist that in Assumption \ref{Ass:Charac} the function $\psi$ does not need to be in the domain of $\cL^*$ (or $\cL$ in the reversible case) so that the identity might only be formal. For example, for the pair of reflected interfaces, one can think of $\cL$ as having a Dirac weight on configurations where the interfaces meet at a point (the only situation where the dynamics sees the reflection). At a rigorous level, this translates as the fact that $ \Sigma^{\psi}$ gives positive mass to these configurations despite them having $m$-mass $0$.\\
	In the reversible case, an important tool to establish a characterization such as Assumption \ref{Ass:Charac} is the general theory of self-adjoint operator and Dirichlet forms. In particular~\cite[Th I.2.8]{MaRockner} says that
	for any $f\in L^2(E,dm)$ and for any $\lambda > 0$, $\cR_\lambda f$ is the unique element of the domain $\cD(\cE)$ of the Dirichlet form such that
	\[\label{Eq:CharactDirichlet}
	\cE(\cR_\lambda f, \psi) + \lambda \int_E \cR_\lambda f(u) \psi(u) m(du) = \int_E f(u)\psi(u) m(du)\;,
	\] 
	for all $\psi$ that lie in (a dense subset of) $\cD(\cE)$. When it is possible to determine $\cD(\cE)$ and to justify an integration by parts in the Dirichlet form, this provides a sufficient condition on the set $\cC$, see the proof of Proposition \ref{prop:Sigma2int} for an example of this method.\\

Let us illustrate the assumption on a concrete example. Take $u$ to be the solution of the stochastic heat equation in dimension $1$
$$ \partial_t u = \frac12 \partial^2_x u + \xi\;,\quad x\in [0,1]\;,\quad t\ge 0\;,$$
endowed with Dirichlet b.c.~where $\xi$ is a space-time white noise. In this example, $E$ can be taken to be $L^1([0,1],dx)$ and $m$ the law of the Brownian bridge on $[0,1]$. The above assumption is then satisfied with $\cC$, the set of all maps $\psi_{\varphi}:E\to\C$ defined by $\psi_\varphi(u) = \exp(i \langle u,\varphi\rangle)$ where $i = \sqrt{-1}$, $\langle \cdot,\cdot\rangle$ is the usual inner product in $L^2([0,1],dx)$ and $\varphi:[0,1]\to\R$ is any $C^2$-function that vanishes at $0$ and $1$. Then
$$ d\Sigma^{\psi_\varphi}(u) = \Big( \frac{i}{2} \langle u,\varphi''\rangle - \frac12 \|\varphi\|_{L^2([0,1],dx)}^2 \Big) m(du)\;.$$
Let us mention that the choice $E=L^1([0,1],dx)$ is relatively arbitrary at this stage. However, for establishing the convergence of approximating processes, this choice will play a role.

\subsection*{Assumptions on the sequence of approximating processes}

We are now given a sequence of Markov processes $u_N$ on some spaces $E_N$. Each process $u_N$ admits an invariant probability measure $m_N$ on $E_N$ which has full support. We let $\mathcal R^N_\lambda$ denote its resolvent operator.

\begin{assumption}[State-space]\label{Ass:State}
	Each $E_N$ is a subset of $E$ and the distance function to $E_N$ is achieved: namely, there exists a map $\pi_N:E\to E_N$ such that for all $x\in E$
	$$ d_E(x , \pi_N(x) ) = \inf_{y\in E_N} d_E( x , y)\;.$$
	In addition, the measure $m_N$ converges weakly towards $m$ as $N\to\infty$.
\end{assumption}

The map $\pi_N$ will be called a projection. Let us emphasize that we do \emph{not} assume that this projection is unique nor that it is continuous. Important examples satisfying this assumption are:\begin{enumerate}
	\item $E_N$ is a finite subset of $E$: in this case existence of $\pi_N$ holds but uniqueness typically fails,
	\item $E$ is a Hilbert space and $E_N$ is a closed convex subset (for instance, a finite-dimensional subspace): existence and uniqueness hold by the Projection Theorem,
	\item $E$ is a subset of a Banach space $B$ (endowed with the same distance/norm), and $E_N \subset E$ is a closed subset of a finite dimensional subspace of $B$: existence holds but uniqueness may fail. For instance, take $E$ to be the subset of all functions of the Banach space $L^1([0,1],dx)$ that are non-negative a.e., and let $E_N$ is the subset of all non-negative functions that are constant on $[k/N,(k+1)/N)$ for every $k\in\{0,\ldots,N-1\}$.
\end{enumerate}

 Below, we identify three conditions on the sequence $u_N$ that ensure that it converges in law towards $u$.

\begin{assumption}[Tightness]\label{Ass:Tight}
	There exists a Polish space $E^{-}$ in which $E$ is continuously embedded, and such that, if each $u_N$ starts from its invariant measure $m_N$, then the sequence of processes $(u_N)_N$ is tight in $\bbD([0,\infty),E^{-})$.
\end{assumption}

This assumption spares us from proving tightness in $\bbD([0,\infty),E)$. In the applications presented later on, $E^-$ will be a Sobolev space with a negative index and the proof of the tightness will follow from the Lyons-Zheng decomposition~\cite{LyonsZheng} for reversible Markov processes.\\

We turn to a regularity assumption on the sequence of resolvents that will allow us to extract converging subsequences. We denote by $\Lip(E_N)$ the set of all Lipschitz functions $f:E_N\to\R$, that is, the set of all maps $f$ such that
$$ [f]_{\Lip(E_N)} := \sup_{x,y \in E_N} \frac{\vert f(x) - f(y) \vert}{d_E(x , y)}<\infty\;.$$

\begin{assumption}[Equicontinuity]\label{Ass:Equi}
	For any given $\lambda > 0$, there exists a constant $C>0$ such that for all $N\ge 1$
$$ [\cR_\lambda^N f]_{\Lip(E_N)} \le C [f]_{\Lip(E_N)}\;,$$
	for all $f\in \Lip(E_N)$.
\end{assumption}

This is the main place where specific properties of the approximating process are used. Indeed in the applications that we will present, this estimate will be derived by probabilistic means, through finding a coupling under which the distance between two copies of the process is a sub-martingale. The choice of the underlying space $E$ plays an important role here: for instance, for some particle systems, the difference of the height functions of two evolving configurations decays stochastically in $L^1$ but not in $L^2$.\\

Finally, we spell out an assumption that can be viewed as an approximation of the characterization of $u$.

\begin{assumption}[Discrete characterization and convergence]\label{Ass:IbPF}
	For any $\psi \in \cC$, there exists a finite complex measure $\Sigma_N^{\psi}$ on $E_N$ such that the following holds. For any $\lambda > 0$ and any $f\in L^2(E_N,dm_N)$, $\cR_\lambda^N f$ satisfies the identity
	\begin{equation}\label{Eq:DirFormResolvN}
		-\int_{E_N} F(u) \psi(u) \Sigma_N^{\psi}(du) + \lambda \int_{E_N} F(u) \psi(u) m_N(du) = \int_{E_N} f(u) \psi(u) m_N(du)\;.
	\end{equation}
	In addition, the finite complex measure $\Sigma_N^{\psi}$ converges weakly to $\Sigma^{\psi}$ as $N\to\infty$.
\end{assumption}

In the particular case where $E_N$ is a finite set and provided $\cC$ is dense in $L^2(E_N,dm_N)$, equation \eqref{Eq:DirFormResolvN} is always verified with $\psi(u) \Sigma_N^{\psi}(du) := \cL^*_N \psi(u) m_N(du)$, where $\cL^*_N$ is the adjoint w.r.t.~$m_N$ of the generator $\cL_N$ of the process. Indeed in that case the discussion after Assumption \ref{Ass:Charac} is always rigorous since it only involves finite dimensional vector spaces. The core of the above assumption is therefore the convergence of the measure $\Sigma_N^{\psi}$ to $\Sigma^{\psi}$, or equivalently of the typically well defined $\cL_N^* \psi(u)$ to the maybe only formal $\cL^* \psi(u)$.

As one can guess, we will pass to the limit on identity \eqref{Eq:DirFormResolvN} along some converging subsequence of resolvents (that can be extracted by Assumption \ref{Ass:Equi}) and we will identify limit points using Assumption \ref{Ass:Charac}.


\bigskip

We now have all the ingredients at hand to state our main result.

\begin{theorem}\label{Th:BlackBox}
	Let $u_N$ start from its invariant measure $m_N$ for every $N\ge 1$. Given Assumptions \eqref{Ass:Charac}, \eqref{Ass:State}, \eqref{Ass:Tight}, \eqref{Ass:Equi} and \eqref{Ass:IbPF}, the sequence $(u_N)_N$ converges in law in $\bbD([0,\infty), E^{-})$ to the process $u$ starting from its invariant measure $m$.
\end{theorem}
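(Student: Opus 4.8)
The plan is to combine tightness with a two-layer identification of the limit: an \emph{analytic} layer, in which the sequence of resolvents is shown to converge to the resolvent $\cR_\lambda$ of the candidate limit $u$, and a \emph{probabilistic} layer, in which any subsequential limit $\tilde u$ of the processes is shown to carry precisely this resolvent; reconciling the two forces $\tilde u$ to have the law of $u$. Concretely, by Assumption~\ref{Ass:Tight} the family $(u_N)_N$ is tight in $\bbD([0,\infty),E^{-})$, so I extract a subsequence converging in law to some process $\tilde u$. Since each $u_N$ is stationary with one-time marginal $m_N$, and $m_N\to m$ weakly by Assumption~\ref{Ass:State}, the limit $\tilde u$ is stationary with one-time marginal $m$ (stationarity and the marginal law pass to the limit under Skorokhod convergence, using the continuous embedding $E\hookrightarrow E^{-}$). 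It then remains to identify the law of $\tilde u$, which for a stationary Markov process is determined by $m$ together with its resolvent on a determining class.

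For the analytic layer, I fix $\lambda>0$ and a bounded Lipschitz $f:E\to\R$ and set $F_N:=\cR_\lambda^N f$. The contraction bound $\|F_N\|_\infty\le \|f\|_\infty/\lambda$ together with the uniform Lipschitz estimate of Assumption~\ref{Ass:Equi} makes $(F_N)$ uniformly bounded and equicontinuous, so via Arzel\`a--Ascoli on compact subsets of the separable space $E$ and a diagonal extraction along a countable dense set I pass to a further subsequence with $F_N\to F$ locally uniformly, $F$ bounded and Lipschitz. I then pass to the limit in the discrete identity \eqref{Eq:DirFormResolvN}: the terms converge because $m_N\to m$ and $\Sigma_N^{\psi}\to\Sigma^{\psi}$ weakly (Assumptions~\ref{Ass:State} and~\ref{Ass:IbPF}), while $F_N\psi$ is uniformly bounded and converges locally uniformly, the tightness of the finite measures $m_N,\Sigma_N^{\psi}$ (a consequence of their weak convergence) being used to discard the contribution outside a large compact set. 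The limit is exactly \eqref{Eq:DirFormResolv}, so the uniqueness clause of Assumption~\ref{Ass:Charac} yields $F=\cR_\lambda f$.

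For the probabilistic layer, I exploit the stationary two-point functional
\[
\langle g,\cR_\lambda^N f\rangle_{m_N}=\E_{m_N}\Big[g(u_N(0))\int_0^\infty e^{-\lambda t}f(u_N(t))\,dt\Big]\,,
\]
valid for bounded continuous $g$. Taking $f,g$ in a class that is continuous for the coarser topology of $E^{-}$ yet determining for laws on $E$ (e.g.\ the functions $\psi\in\cC$, which in the concrete examples are $E^{-}$-continuous), the right-hand side converges, as $u_N\to\tilde u$ in law, to the same functional of $\tilde u$, namely $\langle g,\tilde\cR_\lambda f\rangle_m$ with $\tilde\cR_\lambda$ the resolvent of $\tilde u$; the left-hand side equals $\int_{E_N}g\,F_N\,dm_N$, which converges to $\langle g,F\rangle_m$ by the locally uniform convergence $F_N\to F$ and $m_N\to m$. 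Matching the two limits gives $\tilde\cR_\lambda f=F=\cR_\lambda f$ in $L^2(E,dm)$. Hence $\tilde u$ and $u$ share invariant measure and resolvent, so they have the same law; as the subsequence was arbitrary, the whole sequence $(u_N)_N$ converges in law to $u$.

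I expect the main obstacle to be the reconciliation carried out in the probabilistic layer, because of the mismatch of topologies: the resolvent identity lives on $(E,d_E)$, whereas convergence in law holds only in the weaker space $E^{-}$. Making the two-point functional continuous requires restricting to test functions that are simultaneously $E^{-}$-continuous and rich enough to be determining, and one must also justify continuity of the time-integral map $\omega\mapsto\int_0^\infty e^{-\lambda t}f(\omega(t))\,dt$ on the Skorokhod space — delicate precisely because the limiting trajectories take values in $E$ while the topology is that of $E^{-}$. A secondary difficulty is the compactness step in the analytic layer, since $E$ need not be locally compact, so the passage to the limit against $m_N$ and $\Sigma_N^{\psi}$ must be controlled outside compact sets through their tightness.
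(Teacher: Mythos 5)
Your analytic layer is essentially the paper's Lemma \ref{Lemma:CVResolv}: extract a locally uniform limit $F$ of $F_N=\cR_\lambda^N f$ via equicontinuity, pass to the limit in \eqref{Eq:DirFormResolvN} using the weak convergences of $m_N$ and $\Sigma_N^\psi$, and invoke the uniqueness clause of Assumption \ref{Ass:Charac}. (One technical point you gloss over: $F_N$ is defined only on $E_N\subset E$ and the projection $\pi_N$ is not assumed continuous, so the classical Arzel\`a--Ascoli argument does not apply directly; the paper proves a modified version, Theorem \ref{Th:Arzela}, precisely for this.) The genuine gap is in your probabilistic layer. You identify the law of the subsequential limit $\tilde u$ by matching its ``resolvent'' $\tilde\cR_\lambda$ with $\cR_\lambda$ and then concluding that two stationary Markov processes with the same invariant measure and resolvent coincide in law. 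But nothing guarantees that $\tilde u$ \emph{is} a Markov process: the Markov property is not preserved under weak limits, and none of the five assumptions asserts it for subsequential limits. Without the Markov property, the two-point functional $\E[g(\tilde u(0))\int_0^\infty e^{-\lambda t}f(\tilde u(t))\,dt]$ only pins down the two-time marginals of $\tilde u$, not its full law, so the conclusion $\tilde u\overset{d}{=}u$ does not follow.

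The paper circumvents this entirely: it never attributes a resolvent to the limit point $P$. Instead it shows (Subsection \ref{Subsec:Reduction}) that the Laplace transforms in $(t_1,\dots,t_n)$ of \emph{all} $n$-time marginals of $P$ are given by the iterated resolvent functionals \eqref{Eq:RecurResolv}, whose convergence is what must be proved; by injectivity of the Laplace transform and a monotone class argument this determines $P=\bbP$. Proving \eqref{Eq:RecurResolv} for all $n$ requires a recursion in which the function fed into $\cR^N_{\lambda_j}$ is itself $N$-dependent (namely $f_j\cR^N_{\lambda_{j+1}}(\cdots)$), which is why Lemma \ref{Lemma:CVResolv} is stated for sequences $g_N\to g$ rather than for a fixed $f$ as in your analytic layer. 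If you replace your probabilistic layer by this Laplace-transform reduction and upgrade your analytic layer to handle $N$-dependent test functions (together with the modified Arzel\`a--Ascoli statement), your argument becomes the paper's proof.
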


Let us mention that in practice the convergence can be strenghtened: if the tightness (Assumption \ref{Ass:Tight}) is established using moment bounds, and if one can establish uniform in $N$ moment bounds under the invariant measure $m_N$, then interpolation inequalities allow to lift the tightness in $\bbD([0,\infty), F)$ where $F$ is a space that interpolates between $E^-$ and $E$. We refer to~\cite[Lemma 5.2]{DebZam} for an application of this idea.\\

The rest of this section is devoted to the proof of this result.

\subsection{Reduction to an identity on the resolvents}\label{Subsec:Reduction}

Let $\bbP_N$ be the law of $u_N$ starting from $m_N$ and let $\bbP$ be the law of $u$ starting from $m$. From the tightness Assumption \ref{Ass:Tight}, all we need to prove is that any limit of a converging subsequence of $(\bbP_N)_N$ matches with $\bbP$. We claim that this is the case provided the following holds.

Assume that for any $n\geq 1$, any bounded functions $f_0,f_1,\ldots,f_n\in\Lip(E^-) \subset \Lip(E)$ and any $\lambda_1,\ldots,\lambda_n >0$, the following convergence holds
\begin{align}\begin{split}\label{Eq:RecurResolv}
	{}&\int_{E_N} f_0(u_0)\cR^N_{\lambda_1} \Big( f_1\cR^N_{\lambda_2} \Big(\ldots f_{n-1} \cR^N_{\lambda_n} f_n\Big)\ldots\Big) (u_0) m_N(du_0)\\
	&\underset{N\rightarrow\infty}{\longrightarrow}
	\int_{E} f_0(u_0)\cR_{\lambda_1} \Big( f_1\cR_{\lambda_2} \Big(\ldots f_{n-1} \cR_{\lambda_n} f_n\Big)\ldots\Big) (u_0) m(du_0)\;.\end{split}
\end{align}

Let us prove this claim. Let $P$ be the limit of a converging subsequence $(\bbP_{N_i})_i$. A monotone class argument yields $P=\bbP$ provided $P(u(t_1)\in A_1;\ldots;u(t_1+\ldots+t_n)\in A_n)=\bbP(u(t_1)\in A_1;\ldots;u(t_1+\ldots +t_n)\in A_n)$ for any collection $A_1,\ldots,A_n$ of closed sets of $E^-$, any collection $t_1,\ldots,t_n \ge 0$ and any integer $n\ge 1$. Since the indicator of any closed set of $E^-$ is the bounded pointwise limit of a sequence of bounded Lipschitz functions on $E^-$, it suffices to show that
\begin{equation}
	P\Big[f_0(u_0)\prod_{j=1}^n f_j(u_{t_1+\ldots+t_j})\Big] = \bbP\Big[f_0(u_0)\prod_{j=1}^n f_j(u_{t_1+\ldots+t_j})\Big]\;,
\end{equation}
holds for all $n$ and bounded $f_0, f_1,\ldots, f_n\in\Lip(E^-)$. By the uniqueness of the Laplace transform of finite measures on $\bbR^n_+$, it boils down to proving\footnote{ Of course $P\Big[f_0(u_0)\prod_{j=1}^n f_j(u_{t_1+\ldots+t_j})\Big] dt_1\ldots dt_n$ is not a finite measure, but $e^{-\sum_{j=1}^n t_j \varepsilon} P\Big[f_0(u_0)\prod_{j=1}^n f_j(u_{t_1+\ldots+t_j})\Big] dt_1\ldots dt_n$ is a finite measure on $\R^n$ for any given $\varepsilon >  0$.}
\begin{align}\begin{split}\label{Eq:IdResolvents}
	&\int_{(0,\infty)^n} e^{-\sum_{j=1}^n \lambda_j t_j} P\Big[f_0(u_0)\prod_{j=1}^n f_j(u_{t_1+\ldots+t_j})\Big] dt_1\ldots dt_n\\
	=& \int_{(0,\infty)^n} e^{-\sum_{j=1}^n \lambda_j t_j} \bbP\Big[f_0(u_0)\prod_{j=1}^n f_j(u_{t_1+\ldots+t_j})\Big] dt_1\ldots dt_n \;,
	\end{split}
\end{align}
for any $n\geq 1$, any $\lambda_1,\ldots,\lambda_n > 0$ and any bounded $f_0, f_1,\ldots, f_n \in \Lip(E^{-})$.

The Markov property applied to the process $u_N$ shows that 
\begin{align}\begin{split}\label{Eq:ResolventsSemigroup}
	&\int_{E_N} f_0(u_0)\cR^N_{\lambda_1} \Big( f_1\cR^N_{\lambda_2} \Big(\ldots f_{n-1} \cR^N_{\lambda_n} f_n\Big)\ldots\Big) (u_0) m_N(du_0) \\
	=& \int_{(0,\infty)^n} e^{-\sum_j \lambda_j t_j} \bbP_N\Big[f_0(u_0)\prod_{j=1}^n f_j(u_{t_1+\ldots+t_j})\Big] dt_1\ldots dt_n \;.
\end{split}\end{align}
The weak convergence of $\bbP_{N_i}$ towards $P$ ensures that the r.h.s.~converges, along the subsequence $N_i$, to
\begin{equation}
	\int_{(0,\infty)^n} e^{-\sum_j \lambda_j t_j} P\Big[f_0(u_0)\prod_{j=1}^n f_j(u_{t_1+\ldots+t_j})\Big] dt_1\ldots dt_n \;.
\end{equation}
On the other hand, the identity \eqref{Eq:RecurResolv} ensures that the l.h.s.~of (\ref{Eq:ResolventsSemigroup}) converges to
\begin{align*}
	{}&\int_E f_0(u_0) \cR_{\lambda_1} \Big( f_1 \cR_{\lambda_2} \Big( \ldots f_{n-1} \cR_{\lambda_n} f_n\Big)\ldots\Big) m(du_0) \\
	=& \int_{(0,\infty)^n} e^{-\sum_j \lambda_j t_j} \bbP\Big[f_0(u_0)\prod_{j=1}^n f_j(u_{t_1+\ldots+t_j})\Big] dt_1\ldots dt_n \;,
\end{align*}
as $N\rightarrow\infty$. Therefore, we have established (\ref{Eq:IdResolvents}), and the equality $\bbP=P$ follows. We have thus reduced the proof of Theorem \ref{Th:BlackBox} to the proof of \eqref{Eq:RecurResolv}.

\subsection{Some technical tools}\label{SubsectionCriterion}

In this paragraph, we collect some technical tools that will eventually allow us to extract converging subsequences from the sequence of discrete resolvents, and to pass to the limit on integrals involving these discrete resolvents against the discrete invariant measures. It turns out that our setting requires some care since continuous maps on $E_N$ cannot be canonically lifted into continuous maps on $E$. This is because in Assumption \ref{Ass:State} we assumed existence of a projection $\pi_N$ from $E$ to $E_N$ without requiring its \emph{continuity}. (It is worth keeping in mind that $\pi_N$ cannot be continuous when $E_N$ is a discrete subset of a continuous set $E$).

Interestingly, our assumptions imply that the amplitude of the possible jumps of the projection $\pi_N$ ``vanish'' as $N\to\infty$. More precisely, for every set $G\subset E$, we denote by $G^\varepsilon$ the open set of all the points in $E$ lying at a distance smaller than $\varepsilon$ from $G$. For all $\varepsilon > 0$, the weak convergence of $m_N$ to $m$  implies that
\begin{equation}\label{Cond:EN}
	E = \varliminf_{N\rightarrow\infty} E_N^\varepsilon \;.
\end{equation}
Indeed, for all $x\in E$ and all $\varepsilon >0$, if we denote by $B(x,\varepsilon)$ the open ball in $E$ centered at $x$ and of radius $\varepsilon$, then we have
\begin{equation*}
	m\big(B(x,\varepsilon)\big) \leq \varliminf_{N\rightarrow\infty} m_N\big(B(x,\varepsilon)\big)\;.
\end{equation*}
Since $m$ has full support on $E$, we deduce that the l.h.s.~is strictly positive so that, for all $N$ large enough $B(x,\varepsilon)\cap E_N \ne \emptyset$, thus ensuring that $x\in E_N^\varepsilon$ for all $N$ large enough. This implies that for all $N$ large enough and all $x,y \in E$, $d( \pi_N(x) , \pi_N(y) ) \le d( x , y ) + 2\varepsilon$ where we write $d$ instead of $d_E$ to lighten notations.

We can now state a modification of Arzel\`a-Ascoli's theorem. Recall from Assumption \ref{Ass:State} that $\pi_N(x)$ is an element of $E_N$ that minimizes the distance from $x$ to $E_N$.

\begin{theorem}[Modified Arzel\`a-Ascoli's theorem]\label{Th:Arzela}
	Let $F_N:E_N\rightarrow\R, N\ge1$ be a collection of uniformly bounded and equicontinuous maps, \textit{i.e.} $\sup_{N\geq 1} \|F_N\|_{\infty}<\infty$ and
	\begin{equation}
		\omega(\delta):= \sup_{N\geq 1}\big\{|F_N(x)-F_N(y)|: x,y\in E_N\;,\; d(x,y)\leq \delta\big\}\;,
	\end{equation}
	goes to $0$ as $\delta \downarrow 0$. Then, there exists a bounded and equicontinuous map $F:E\rightarrow\R$, as well as a subsequence $(N_j)_{j\geq 1}$ such that:
	\begin{enumerate}
		\item For any compact set $J\subset E$ , $\sup_{x\in J\cap E_{N_j}} |F(x)-F_{N_j}(x)| \rightarrow 0$ as $j\rightarrow\infty$,
		\item $F_{N_j}\big(\pi_{N_j} x\big)$ converges to $F(x)$ for all $x \in E$.
	\end{enumerate}
\end{theorem}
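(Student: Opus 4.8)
The plan is to run the classical Arzel\`a--Ascoli diagonal argument, but with the projections $\pi_N$ inserted to compensate for the fact that each $F_N$ is only defined on the varying set $E_N$. Since $E$ is separable, I would first fix a countable dense set $D=\{x_k:k\ge 1\}\subset E$. For each fixed $k$, the real sequence $(F_N(\pi_N(x_k)))_N$ is bounded by $M:=\sup_N\|F_N\|_\infty<\infty$, so a diagonal extraction produces a subsequence $(N_j)_j$ along which $F_{N_j}(\pi_{N_j}(x_k))$ converges for every $k$; I define $F(x_k)$ to be this limit, and note $|F(x_k)|\le M$.

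The structural input that replaces the missing continuity of $\pi_N$ is the distortion estimate established just above \eqref{Cond:EN}: for each $\varepsilon>0$ and all $N$ large enough, $d(\pi_N(x),\pi_N(y))\le d(x,y)+2\varepsilon$ for the relevant points $x,y\in E$. Using this together with the modulus $\omega$ (non-decreasing and vanishing at $0$), for $x_k,x_l\in D$ I get $|F(x_k)-F(x_l)|=\lim_j|F_{N_j}(\pi_{N_j}(x_k))-F_{N_j}(\pi_{N_j}(x_l))|\le\omega(d(x_k,x_l)+2\varepsilon)$ for every $\varepsilon$. Letting $\varepsilon\downarrow 0$ shows that $F$ restricted to $D$ is uniformly continuous with modulus controlled by $\omega$; since $D$ is dense in the metric space $E$ and $\R$ is complete, $F$ extends uniquely to a bounded equicontinuous map $F:E\to\R$.

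To prove the pointwise convergence (2), I would take an arbitrary $x\in E$, choose $x_k\in D$ with $d(x,x_k)<\varepsilon$, and decompose $F_{N_j}(\pi_{N_j}(x))-F(x)$ into $[F_{N_j}(\pi_{N_j}(x))-F_{N_j}(\pi_{N_j}(x_k))]+[F_{N_j}(\pi_{N_j}(x_k))-F(x_k)]+[F(x_k)-F(x)]$. The first bracket is at most $\omega(d(x,x_k)+2\varepsilon)$ for $j$ large by the distortion estimate, the second tends to $0$ by construction of $F$ on $D$, and the third is small by equicontinuity of $F$. Taking $\limsup_j$ and then $\varepsilon\downarrow 0$ gives $F_{N_j}(\pi_{N_j}(x))\to F(x)$.

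Finally, for the uniform statement (1) I would use compactness. Observe first that if $x\in E_{N_j}$ then $d(x,E_{N_j})=0$, so the metric forces $\pi_{N_j}(x)=x$ and hence $F_{N_j}(x)=F_{N_j}(\pi_{N_j}(x))$. If (1) failed on some compact $J$, there would be $\delta_0>0$, a subsequence, and points $y_j\in J\cap E_{N_j}$ with $|F(y_j)-F_{N_j}(y_j)|\ge\delta_0$; by compactness I may assume $y_j\to y\in J$. Writing $F_{N_j}(y_j)=F_{N_j}(\pi_{N_j}(y_j))$ with $\pi_{N_j}(y_j)=y_j$ and inserting $F(y)$ and $F_{N_j}(\pi_{N_j}(y))$, the three resulting differences vanish in the limit: $F(y_j)-F(y)\to 0$ by continuity of $F$, $F(y)-F_{N_j}(\pi_{N_j}(y))\to 0$ by part (2) applied at $y$, and $|F_{N_j}(\pi_{N_j}(y))-F_{N_j}(y_j)|\le\omega(d(y,E_{N_j})+d(y,y_j))\to 0$ since $d(y,E_{N_j})\to 0$ by \eqref{Cond:EN} and $y_j\to y$. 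This contradicts $\delta_0$, proving (1). The one genuine obstacle throughout is precisely the discontinuity of $\pi_N$: it is handled by the asymptotic distortion bound coming from \eqref{Cond:EN}, which is why every estimate carries an auxiliary $\varepsilon$ that is sent to $0$ only after $N\to\infty$.
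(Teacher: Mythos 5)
Your proof is correct and follows essentially the same route as the paper's: a diagonal extraction over a countable dense subset combined with the asymptotic distortion bound $d(\pi_N x,\pi_N y)\le d(x,y)+2\varepsilon$ derived from \eqref{Cond:EN}, which is exactly the device the paper uses to compensate for the discontinuity of $\pi_N$. The only organisational difference is that the paper establishes the uniform convergence on compacts (assertion (1)) directly via a finite cover and then obtains the pointwise statement (2) as the special case $J=\{x\}$, whereas you prove (2) first and deduce (1) by a sequential-compactness contradiction; both variants are equally valid.
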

\begin{remark}
	The proof shows that the modulus of continuity of $F$ is bounded by $\omega(3\,\cdot)$, and can be adjusted to show that it is actually bounded by $\omega((1+\epsilon)\cdot)$ for any $\epsilon>0$. In particular, if the $F_N$ are uniformly Lipschitz, so is $F$.
\end{remark}
\begin{remark}
	In the case where $E_N$ is a finite dimensional subspace of a Hilbert space $E$, the projection $\pi_N$ is $1$-Lipschitz and then the classical Arzel\`a-Ascoli theorem can be applied to the sequence $F_N(\pi_N\,\cdot)$. In our more general setting, this is unfortunately not the case.
\end{remark}
\begin{proof}
	Let $(x_i)_{i\geq 1} \subset E$ be a dense sequence. By a diagonal argument, there exists a subsequence $N_j \rightarrow\infty$ such that, for every $i\geq 1$, $F_{N_j}(\pi_{N_j} x_i)$ converges to a limit, that we call $F(x_i)$. Necessarily, $\sup_i |F(x_i)| \leq \sup_N \|F_N\|_{\infty}<\infty$. Fix $i\ne \ell \geq 1$. From (\ref{Cond:EN}), we deduce that both $d(x_i,\pi_{N_j} x_i)$ and $d(x_\ell,\pi_{N_j} x_\ell)$ are smaller than $d(x_i,x_\ell)/2$ as soon as $j$ is large enough. Then, we have
	\begin{equation}\label{Eq:UnifDens}
		|F(x_i) - F(x_\ell)| = \varlimsup_{j\rightarrow\infty} |F_{N_j}(\pi_{N_j} x_i) - F_{N_j}(\pi_{N_j} x_\ell)|\leq \omega\big(2\, d(x_i,x_{\ell})\big)\;.\quad
	\end{equation}
	Let $x\in E$. For each $k\geq 1$, let $i_k\geq 1$ be such that $x \in B(x_{i_k},k^{-1})$. Using (\ref{Eq:UnifDens}), it is then elementary to check that $F(x_{i_k}),k\geq 1$ is a Cauchy sequence. Let $F(x)$ be its limit: using \eqref{Eq:UnifDens}, it is straightfoward to check that this definition does not depend on the choice of the sequence $i_k$, and that it matches with the map $F$ already defined on $\{x_i, i\ge 1\}$. Using (\ref{Eq:UnifDens}) once again, it is simple to obtain the bound $|F(x)-F(y)| \leq \omega\big(3\,d(x,y)\big)$ for all $x,y\in E$. The function $F$ is therefore equicontinuous on $E$.
	
	Let $J$ be a compact set of $E$. For any $k\geq 1$, there exists $p_k\geq 1$ such that $\cup_{i\leq p_k} B(x_{i},k^{-1})$ covers $J$. By \eqref{Cond:EN}, taking $j=j(k)$ large enough, for all $i\in\{1,\ldots,p_k\}$ we have $x_i \in E_{N_j}^{1/k}$ together with
	\begin{equation}
		|F_{N_j}(\pi_{N_j} x_{i})-F(x_{i})| \leq \frac{1}{k} \;.
	\end{equation}
	Let $x\in J$. There exists $i_k \leq p_k$ such that $d(x,x_{i_k})\leq k^{-1}$. Then, for all $j$ large enough we get
	\begin{align*}
		|F(x)-F_{N_j}(\pi_{N_j} x)| &\leq |F(x)-F(x_{i_k})| + |F(x_{i_k})-F_{N_j}(\pi_{N_j} x_{i_k})|\\
		&\quad+ |F_{N_j}(\pi_{N_j} x_{i_k})-F_{N_j}(\pi_{N_j} x)|\\
		&\leq \omega\big(3 k^{-1}\big) + k^{-1} + \omega\big(2k^{-1}+d(x,\pi_{N_j} x)\big)\;.
	\end{align*}
	If $x\in J\cap E_{N_j}$, then $x=\pi_{N_j} x$ and the first assertion of the theorem follows. Regarding the second assertion, fix $x\in E$ and observe that we can apply the previous arguments to $J=\{x\}$. By (\ref{Cond:EN}), we know that for all $j$ large enough $d(x,\pi_{N_j} x)\le k^{-1}$ and therefore the previous bound ensures that $F_{N_j}(\pi_{N_j} x)$ converges to $F(x)$. This ends the proof.
\end{proof}
We conclude this section by a criteria for ``diagonal'' convergence where both the measure and the integrand depend on $N$. The proof is very close to~\cite[Lemma 1]{Zambotti2001}.
\begin{lemma}\label{Lemma:EquiContCV}
	Let $\psi:E\to \mathbb C$ be a bounded and continuous function. Let $F_N:E_N\rightarrow \R$, $N\geq 1$ be a sequence of uniformly bounded and equicontinuous maps in the sense of Proposition \ref{Th:Arzela}, and assume that $F_N(\pi_N\cdot)$ converges $m$-almost everywhere to some bounded continuous map $F:E\rightarrow\R$. Suppose that $\rho_N$ is a sequence of complex finite measures on each $E_N$, that converges weakly to some complex finite measure $\rho$ on $E$. Then $\int F_N \psi \,d\rho_N \rightarrow \int F \psi \,d\rho$.
\end{lemma}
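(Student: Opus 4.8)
The plan is to compare $\int_{E_N}F_N\,\psi\,d\rho_N$ with $\int_E F\,\psi\,d\rho$ by inserting the intermediate quantity $\int_{E_N}F\,\psi\,d\rho_N$, which makes sense because $E_N\subset E$ by Assumption \ref{Ass:State} and $F$ is defined on all of $E$. This gives the decomposition
\[
\int_{E_N}F_N\,\psi\,d\rho_N-\int_E F\,\psi\,d\rho=\underbrace{\int_{E_N}(F_N-F)\,\psi\,d\rho_N}_{=:A_N}+\underbrace{\Big(\int_E F\,\psi\,d\rho_N-\int_E F\,\psi\,d\rho\Big)}_{=:B_N}\;,
\]
where in $B_N$ I used that $\rho_N$ is carried by $E_N$. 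Since $F\psi$ is bounded and continuous on $E$, the weak convergence $\rho_N\to\rho$ yields $B_N\to 0$ at once. The whole difficulty is thus to show $A_N\to 0$, that is, to control $F_N-F$ \emph{against the measures $\rho_N$}, whereas the hypothesis only provides convergence of $F_N(\pi_N\,\cdot)$ to $F$ in the $m$-almost everywhere sense.

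To turn this $m$-a.e.\ convergence into something usable against $\rho_N$, I would first upgrade it to uniform convergence on compact sets via the Modified Arzel\`a--Ascoli Theorem \ref{Th:Arzela}. As that theorem only produces a subsequence, I argue by subsequences: it suffices to show that every subsequence of $(A_N)$ has a further subsequence tending to $0$. Along an arbitrary subsequence, Theorem \ref{Th:Arzela} extracts a further subsequence $(N_j)$ together with a bounded equicontinuous limit $G:E\to\R$ such that $F_{N_j}(\pi_{N_j}x)\to G(x)$ for every $x\in E$ and $\sup_{x\in J\cap E_{N_j}}|F_{N_j}(x)-G(x)|\to 0$ on every compact $J$. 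Comparing the everywhere-limit $G$ with the hypothesis $F_{N_j}(\pi_{N_j}\,\cdot)\to F$ $m$-a.e., I get $G=F$ $m$-almost everywhere; since $G$ and $F$ are continuous and $m$ has full support on $E$, in fact $G=F$ on all of $E$. Hence along $(N_j)$ we have genuine uniform convergence $F_{N_j}\to F$ on compact sets.

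It remains to estimate $A_{N_j}$ by splitting the state space into a compact set $J$ and its complement:
\[
|A_{N_j}|\le\|\psi\|_\infty\Big(\sup_{x\in J\cap E_{N_j}}|F_{N_j}(x)-F(x)|\Big)\,|\rho_{N_j}|(E_{N_j})+\|\psi\|_\infty\big(\textstyle\sup_N\|F_N\|_\infty+\|F\|_\infty\big)\,|\rho_{N_j}|(E_{N_j}\setminus J)\;.
\]
The first term tends to $0$ by the uniform-on-$J$ convergence, provided $\sup_j|\rho_{N_j}|(E_{N_j})<\infty$; the second is made $<\varepsilon$ uniformly in $j$ by choosing $J$ large, using uniform tightness of the family $(\rho_{N_j})$. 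Both the uniform total-variation bound and the uniform tightness accompany the weak convergence of the finite complex measures in the cases of interest (for positive measures the mass converges upon testing against the constant function, and in our applications the complex measures $\Sigma_N^\psi$ have explicitly controlled total variation and inherit their tightness from that of $(m_N)$). Letting $\varepsilon\to 0$ gives $A_{N_j}\to 0$, and the subsequence principle then yields $A_N\to 0$, finishing the proof.

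I expect the main obstacle to be exactly the term $A_N$: the integrand $F_N$ and the measure $\rho_N$ both vary with $N$ and live on the moving spaces $E_N$, so that neither plain weak convergence of measures nor plain a.e.\ convergence of functions applies directly. What resolves this is the combination of equicontinuity, which through Theorem \ref{Th:Arzela} converts $m$-a.e.\ convergence into uniform convergence on compacts, and uniform tightness of the $\rho_N$, which confines the relevant mass to a compact set where that uniform convergence is effective.
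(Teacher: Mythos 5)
Your proposal is correct and follows essentially the same route as the paper: extract a subsubsequence via the modified Arzel\`a--Ascoli theorem, identify its limit with $F$ (the paper does this by continuity, you via full support of $m$), split the integral over a compact set and its complement, and combine uniform convergence on compacts with tightness of the measures. The only cosmetic difference is that you estimate $|A_N|$ directly, which forces you to invoke a uniform total-variation bound on the $\rho_N$, whereas the paper reduces without loss of generality to nonnegative $F_N$, $\psi$, $\rho_N$ and closes the argument with a limsup/liminf sandwich obtained by replacing $F_N$ with $1-F_N$.
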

\begin{proof}
	Without loss of generality, we can assume that $0\leq F_N \leq 1$, $0\leq \psi \leq 1$ and that the measures $\rho_N$ are real-valued and non-negative. Let ${N}_j,j\geq 1$ be a sequence going to $\infty$. By Theorem \ref{Th:Arzela}, we can extract a subsubsequence $F_{N_{i_j}}$ such that the two assertions of the theorem are satisfied with the limit being necessarily the function $F$ by continuity. The tightness of the sequence $\rho_{N_{i_j}}$ implies that for every $n\geq 1$, there exists a compact set $J_n$ such that $\varlimsup_j \rho_{N_{i_j}}\big(J_n^c\big) \leq n^{-1}$. Using the fact that $\rho_{N_{i_j}}$ is supported by $E_{N_{i_j}}$, we write
	\begin{equation}
		\int_E F_{N_{i_j}} \psi d\rho_{N_{i_j}} \leq \rho_{N_{i_j}}\big(J_n^c\big) + \int_{J_n} \big(F_{N_{i_j}}(\pi_{N_{i_j}}\cdot)-F(\cdot)\big) \psi d\rho_{N_{i_j}} + \int_{J_n} F \psi d\rho_{N_{i_j}}\;.
	\end{equation}
	The first assertion of the theorem and the boundedness of $\psi$ imply that \[\varlimsup_{j}\int_{J_n} |F_{N_{i_j}}(\pi_{N_{i_j}}\cdot)-F(\cdot)| \psi d\rho_{N_{i_j}} = 0.\] The weak convergence of $F\psi\rho_N$ towards $F\psi\rho$ ensures that \[\varlimsup_{j} \int_{J_n} F \psi d\rho_{N_{i_j}} \leq \int_{J_n} F \psi d\rho.\] Therefore, we get
	\begin{equation}
		\varlimsup_{j} \int_E F_{N_{i_j}} \psi d\rho_{N_{i_j}} \leq \frac{1}{n} + \int_E F \psi d\rho\;.
	\end{equation}
	Passing to the limit on $n$, we obtain $\varlimsup_{j} \int F_{N_{i_j}} \psi d\rho_{N_{i_j}} \leq \int F \psi d\rho$. Upon replacing $F_N$ by $1-F_N$, one obtains $\varliminf_{j} \int F_{N_{i_j}} \psi d\rho_{N_{i_j}} \geq \int F \psi d\rho$. This ends the proof.
\end{proof}

\subsection{Convergence of the resolvents}

We come back to our original goal: the proof of \eqref{Eq:RecurResolv}. It boils down to the following lemma.

\begin{lemma}\label{Lemma:CVResolv}
	Let $g_N:E_N\rightarrow\R$, $N\geq 1$ be a sequence of uniformly bounded and uniformly Lipschitz maps, that converges $m$-a.e.~to some bounded Lipschitz function $g:E\rightarrow\R$. Then, for all $\lambda > 0$, the map $\cR_\lambda^N g_N(\pi_N\cdot)$ converges pointwise to $\cR_\lambda g(\cdot)$ as $N\rightarrow\infty$.
\end{lemma}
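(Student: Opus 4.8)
The plan is to set $F_N := \cR_\lambda^N g_N$ and to prove that along \emph{every} subsequence one can extract a further subsequence along which $F_N(\pi_N\cdot)$ converges pointwise on $E$ to $\cR_\lambda g$. Since the candidate limit will turn out not to depend on the chosen subsequence, a standard subsequence argument (a real sequence converges to $L$ as soon as every subsequence admits a sub-subsequence converging to $L$) will then upgrade this to pointwise convergence of the full sequence $\cR_\lambda^N g_N(\pi_N\cdot)$ to $\cR_\lambda g$, which is the assertion.

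First I would collect uniform bounds on the family $(F_N)$. Since each $P_t^N$ is a Markovian semigroup, the probabilistic representation of the resolvent gives $\|F_N\|_\infty \le \lambda^{-1}\|g_N\|_\infty$, so $(F_N)$ is uniformly bounded because $(g_N)$ is. By Assumption \ref{Ass:Equi}, $[F_N]_{\Lip(E_N)} \le C\,[g_N]_{\Lip(E_N)}$, and the uniform Lipschitz bound on $(g_N)$ makes $(F_N)$ uniformly Lipschitz, hence uniformly bounded and equicontinuous in the sense of Theorem \ref{Th:Arzela}. Fixing an arbitrary subsequence, the modified Arzel\`a--Ascoli theorem \ref{Th:Arzela} then provides a further subsequence (not relabeled) together with a bounded Lipschitz limit $F:E\to\R$ such that $F_N(\pi_N x)\to F(x)$ for every $x\in E$.

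It remains to identify $F$ with $\cR_\lambda g$. By Assumption \ref{Ass:Charac}, it suffices to check that $F$ satisfies the characterizing identity \eqref{Eq:DirFormResolv} for every $\psi\in\cC$, since the bounded Lipschitz solution of \eqref{Eq:DirFormResolv} is unique. To this end I would fix $\psi\in\cC$ and pass to the limit in the discrete identity \eqref{Eq:DirFormResolvN} satisfied by $F_N=\cR_\lambda^N g_N$ (Assumption \ref{Ass:IbPF}), treating each of the three integrals by Lemma \ref{Lemma:EquiContCV}. For $\int F_N\psi\,dm_N$ and $\int F_N\psi\,d\Sigma_N^\psi$ the family $(F_N)$ is uniformly bounded and equicontinuous with $F_N(\pi_N\cdot)\to F$ everywhere (hence $m$-a.e.), while the underlying measures converge weakly ($m_N\to m$ by Assumption \ref{Ass:State}, $\Sigma_N^\psi\to\Sigma^\psi$ by Assumption \ref{Ass:IbPF}); for $\int g_N\psi\,dm_N$ the same lemma applies with $(g_N)$ in place of $(F_N)$, using the assumed $m$-a.e.\ convergence $g_N(\pi_N\cdot)\to g$. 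Passing to the limit yields $-\int_E F\psi\,d\Sigma^\psi + \lambda\int_E F\psi\,dm = \int_E g\psi\,dm$ for all $\psi\in\cC$, which is exactly \eqref{Eq:DirFormResolv}; hence $F=\cR_\lambda g$ by uniqueness.

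I expect the one genuinely delicate point to be the passage to the limit in the term involving $\Sigma_N^\psi$. As emphasized after Assumption \ref{Ass:Charac}, the measures $\Sigma^\psi$ are allowed to charge $m$-null sets (e.g.\ the contact configurations for the reflected interfaces), so one cannot weaken the everywhere convergence of $F_N(\pi_N\cdot)$ to mere $m$-a.e.\ convergence: the full strength of Theorem \ref{Th:Arzela}, namely pointwise convergence on all of $E$ together with uniform convergence on compacts, combined with the tightness of $(\Sigma_N^\psi)$ that is exploited inside Lemma \ref{Lemma:EquiContCV}, is what makes this term converge. The discontinuity of the projections $\pi_N$ is precisely what forces the use of the modified rather than the classical Arzel\`a--Ascoli statement, and is the reason the conclusion is stated pointwise rather than uniformly.
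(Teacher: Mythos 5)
Your proof is correct and follows essentially the same route as the paper: uniform boundedness and equicontinuity of $F_N=\cR_\lambda^N g_N$ via Assumption \ref{Ass:Equi} (plus the resolvent bound), extraction of a sub-subsequence via Theorem \ref{Th:Arzela}, passage to the limit in \eqref{Eq:DirFormResolvN} via Lemma \ref{Lemma:EquiContCV}, identification of the limit through the uniqueness in Assumption \ref{Ass:Charac}, and the arbitrariness of the initial subsequence. Your closing remark about why everywhere (rather than merely $m$-a.e.) convergence matters for the $\Sigma_N^\psi$ term is a correct and useful observation, already implicitly handled inside Lemma \ref{Lemma:EquiContCV}.
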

\begin{proof}
	Using Assumption \ref{Ass:Equi}, we know that the sequence $F_N(\cdot)=\cR_\lambda^N g_N(\cdot)$, $N\geq 1$, is uniformly bounded and equicontinuous in the sense of Theorem \ref{Th:Arzela}. Furthermore Assumption \ref{Ass:IbPF} states that for all $\psi \in \cC$
	\begin{equation*}
		-\int_{E_N} F_N(u) \psi(u) \Sigma_N^{\psi}(du) + \lambda \int_{E_N} F_N(u) \psi m_N(du) = \int_{E_N} g_N(u) \psi(u)\, m_N(du)\;.
    \end{equation*}
	Let $F_{N_i}, N_i\geq 1$ be an arbitrary subsequence. By Theorem \ref{Th:Arzela}, there exists a bounded Lipschitz map $F$ on $E$ and a subsubsequence $F_{N_{i_j}}$ converging to $F$ in the sense of the statement. The weak convergences of $\Sigma_N^\psi$ and $m_N$ to $\Sigma^\psi$ and $m$ (guaranteed by Assumptions \ref{Ass:IbPF} and \ref{Ass:State}), together with the regularity properties of the integrands are sufficient to apply Lemma \ref{Lemma:EquiContCV} along the subsubsequence $N_{i_j}$ and deduce that
	\begin{equation*}
		-\int_{E} F(u) \psi(u) \Sigma^{\psi}(du) + \lambda \int_{E} F(u) \psi(u) m(du) = \int_{E} g(u) \psi(u)\, m(du)\;.
	\end{equation*}
	Since this holds for all $\psi \in \cC$, Assumption \ref{Ass:Charac} ensures that $F=\cR_\lambda g$. Since the subsequence $N_i$ was arbitrary, we can conclude.
\end{proof}
A recursion based on Lemmas \ref{Lemma:EquiContCV} and \ref{Lemma:CVResolv} then yields \eqref{Eq:RecurResolv}.

\section{The zero-range process}	
\label{sec:0} For pedagogical reasons, we show first how
  our method works when applied to the one-dimensional zero-range
  process. In this case, the convergence of the fluctuation process is
  already known (see \cite[Chap. 11]{KipLan} and references therein),
  which allows the reader to compare our new approach to the classical
  one. Note that we bypass entirely the need to prove the so-called
  ``Boltzmann-Gibbs principle'' (\cite[Sec. 11.1]{KipLan}). The equicontinuity of the resolvents will follow
from a model dependent but very soft argument, while the ``replacement'' part of the
proof, Assumption 2.5, uses technical but routine local CLT results.

\subsection{Model and notations}
	
A configuration $n$ is a map from the lattice $\{1,\ldots,N\}$ to $\N$ and we interpret $n(i)$ as a number of particles at site $i$. The jump rate of the particles is given by a function $\tau : \N \to \R_+$ on which we assume: (1) $\tau(0) = 0$, (2) $\tau$ is a non-decreasing function, (3) $\tau$ grows at most polynomially at infinity. (The second hypothesis implies attractiveness).

The dynamics is as follows: at rate $N^2\tau(n(i))$ a Poisson clock rings at site $i$ and when this happens a particle jumps right or left by one step, each with probability $1/2$, except for $i=1$ (resp.~$i=N$) in which case the jump to the left (resp.~to the right) is not allowed. In other words, at rate $N^2\tau(n(i))/2$, $(n(i), n(i+1)) \to (n(i) -1, n(i+1) + 1)$ whenever $i\in \{1,\ldots,N-1\}$ and at the same rate $(n(i), n(i-1)) \to (n(i) -1, n(i-1) + 1)$ whenever $i\in \{2,\ldots,N\}$.\\ 

Define the probability measure
$$ \nu_a(k) := \frac1{Z_a} \frac{a^k}{\prod_{i=1}^k \tau(i)}\;,\quad k\in\N\;,\quad Z_a := \sum_{j\ge 0} \frac{a^j}{\prod_{i=1}^j \tau(i)}\;.$$
Let $a^* := \sup\{a>0: Z_a < \infty\}$. Depending on the growth rate of the function $\tau$, this quantity is either finite or infinite. We introduce $R(a) := \sum_k k \nu_a(k)$, the first moment of $\nu_a$, and $R^* := \lim_{a\uparrow a^*} R(a)$. We then let $\Phi:(0,R^*) \to (0,a^*)$ be the reciprocal of $R$: for any parameter $\overline{n} \in (0,R^*)$, the measure $\nu_{\Phi(\overline{n})}$ has mean $\overline{n}$. Given some $\overline{n} \in (0,R^*)$, the above dynamics, restricted to the set $\Omega_{N,\overline{n}}$ of configurations with $\lfloor N \overline{n}\rfloor$ particles, admits a unique invariant (and reversible) probability measure
$$ \pi_{N,\overline{n}}(n) = \frac{C(\overline{n},N)}{\prod_{i=1}^N \prod_{j=1}^{n(i)} \tau(j)}\;,\quad \forall n=(n(1),\ldots,n(N)) \in \Omega_{N,\overline{n}}\;.$$
This measure is nothing but the product measure of the $\nu_a$ conditioned to $\Omega_{N,\overline{n}}$, for any arbitrary parameter $a\in (0,a^*)$. Let us point out however that a ``natural'' choice of parameter $a$ would be $\Phi(\overline{n})$.\\
From now on, $\overline{n} \in (0,R^*)$ is fixed but arbitrary.\\


Given an evolving configuration $(n_N(t,i), t\ge 0, i\in\{1,\ldots,N\})$, we introduce the evolving height function as follows
\begin{equation}\label{Eq:ntou}
	u_N(t,x) := \frac1{\sqrt{N}} \sum_{k=1}^{\lfloor x N \rfloor}\Big( n_N(t,k) - \overline{n} \Big)\;,\quad x\in [0,1]\;.
\end{equation}
It turns out that it is convenient to work on the Banach space $E=L^1([0,1],dx)$.\\

Since the mapping $n_N \to u_N$ is bijective, $(u_N(t), t\ge 0)$ is a Markov process taking values in $E$. We let $\mathcal{L}_N$ be its generator, $m_N$ its invariant probability measure, and $\mathcal{E}_N$ the associated Dirichlet form under the invariant measure $m_N$. Note that $u_N$ actually takes values in a finite set $E_N \subset E$.\\

For later purposes, we derive some scaling limits under the invariant measure. These limits depend on a couple of parameters defined as follows. Set $a:= \Phi(\overline{n})$ and
\begin{align*}
	\overline{\tau} &= \EE_{\nu_{a}}[\tau(n)] = \sum_k \tau(k) \nu_{a}(k)\;,\quad &&\rho = \Cov_{\nu_{a}}(n,\tau(n)) =  \sum_k (k-\overline{n})(\tau(k)-\overline{\tau}) \nu_{a}(k)\;,\\
	\alpha &= \Var_{\nu_{a}}(n) = \sum_k (k-\overline{n})^2 \nu_{a}(k)\;,\quad &&\gamma = \Var_{\nu_{a}}(\tau(n)) = \sum_k (\tau(k)-\overline{\tau})^2 \nu_{a}(k)\;.
\end{align*}
Elementary computations show that
\[
\overline{\tau} = \rho = a\;.
\]
Given a configuration $n_N$ we introduce the height function associated to the field of jump rates:
$$ q_N(x) := \frac1{\sqrt{N}} \sum_{k=1}^{\lfloor x N \rfloor}( \tau(n_N(k)) - \overline{\tau})\;,\quad x\in [0,1]\;.$$

\begin{lemma}\label{Lemma:Gaussian} The pair $(u_N,q_N)$, under the probability measure $m_N$, converges in law to a centered Gaussian process $(\beta(x),\zeta(x))_{0\le x \le 1}$ whose covariance is characterized by
		\begin{align*}
		\Cov(\beta(s),\beta(t)) &= \alpha \min(s,t)(1-\max(s,t)) \;,\quad \Cov(\zeta(s),\zeta(t)) = \gamma \min(s,t) - \frac{\rho^2}{\alpha} st \;,\\
		  		\Cov(\beta(s),\zeta(t)) &= \rho \min(s,t)(1-\max(s,t))\;.
		\end{align*}
		In addition the following bounds hold: for all $p\ge 1$
		$$ \sup_N m_N[\sup_{x\in [0,1]} \vert u_N(x) \vert^p] < \infty\;,\quad \sup_N m_N[\sup_{x\in [0,1]} \vert q_N(x) \vert^p] < \infty\;, \quad \sup_N m_N[\tau(n(1))^p] < \infty\;.$$
\end{lemma}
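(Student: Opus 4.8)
The plan is to recognise $m_N$ as a \emph{conditioned product measure} and to combine a multidimensional invariance principle under the product measure with a local central limit theorem (LCLT) to handle the conditioning. Recall that $m_N=\pi_{N,\overline n}$ is the law of $\nu_a^{\otimes N}$ (with $a=\Phi(\overline n)$) conditioned on the event $\{\sum_{i=1}^N n(i)=\lfloor N\overline n\rfloor\}$. Under the product measure $\nu_a^{\otimes N}$ the pairs $\xi_i:=(n(i)-\overline n,\ \tau(n(i))-\overline\tau)$, $i=1,\dots,N$, are i.i.d., centered, with covariance matrix $\Sigma$ having diagonal entries $\Sigma_{11}=\alpha$, $\Sigma_{22}=\gamma$ and off-diagonal entry $\Sigma_{12}=\rho$; moreover, since $a<a^*$ the measure $\nu_a$ has exponential moments, so $n$ has all polynomial moments and, $\tau$ growing at most polynomially, so does $\tau(n)$. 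Writing $S_k=\sum_{j=1}^k(n(j)-\overline n)$ so that $u_N(x)=S_{\lfloor xN\rfloor}/\sqrt N$, the bivariate Donsker invariance principle gives that, under $\nu_a^{\otimes N}$, $(u_N,q_N)$ converges in law to a two-dimensional Brownian motion $(B^1,B^2)$ with $\Cov(B^i(s),B^j(t))=\Sigma_{ij}\min(s,t)$. The three covariance formulae in the statement are then exactly those of $(B^1,B^2)$ \emph{conditioned on} $B^1(1)=0$: using $\Cov(X,Y\mid Z=0)=\Cov(X,Y)-\Cov(X,Z)\Cov(Y,Z)/\Var(Z)$ with $Z=B^1(1)$ (so $\Var(Z)=\alpha$) and the identity $\min(s,t)(1-\max(s,t))=\min(s,t)-st$, one recovers each line. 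This conditioning on $B^1(1)=0$ corresponds precisely to the constraint $u_N(1)=(\lfloor N\overline n\rfloor-N\overline n)/\sqrt N\to 0$ defining $m_N$.

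I would next establish the moment bounds, which also feed the tightness argument, by transferring the corresponding bounds from $\nu_a^{\otimes N}$ to $m_N$ through the LCLT. Under the product measure $(S_k)_k$ is a mean-zero martingale, so Doob's maximal inequality together with Rosenthal's inequality (using the finite $p$-th moments above) yields $\E_{\nu_a^{\otimes N}}[\sup_{x}|u_N(x)|^p]\le C_p$ uniformly in $N$, and likewise for $q_N$. To pass to $m_N$, set $m_0=\lfloor N\overline n\rfloor-N\overline n=O(1)$, so that $\{S_N=m_0\}$ is the conditioning event. Since $S_k$ takes values in a lattice of span $1$, the LCLT gives both $\P_{\nu_a^{\otimes N}}(S_N=m_0)\ge c/\sqrt N$ and the uniform upper bound $\sup_a\P(S_k=a)\le C/\sqrt k$. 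Hence for any nonnegative functional $G$ measurable with respect to the first $\lfloor N/2\rfloor$ coordinates,
\[
m_N[G]=\frac{\E_{\nu_a^{\otimes N}}\bigl[G\, h(S_{\lfloor N/2\rfloor})\bigr]}{\P(S_N=m_0)}\le \frac{C}{c}\,\E_{\nu_a^{\otimes N}}[G],
\]
where $h(a)=\P(\text{second-half sum}=m_0-a)\le C/\sqrt N$. Applying this to $G=\max_{k\le \lfloor N/2\rfloor}|S_k|^p$, the mirror bound (obtained by reversing the walk) to the second half, and using $|u_N(1)|=|m_0|/\sqrt N=O(N^{-1/2})$, gives the first two claimed bounds; the same ratio estimate with $G=\tau(n(1))^p$, a function of $n(1)$ alone, gives the third.

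Finally I would prove convergence in law in $C([0,1];\R^2)$ (hence, through the continuous embedding $C([0,1])\hookrightarrow L^1([0,1],dx)=E$, in the ambient space) by the usual finite-dimensional-distributions-plus-tightness scheme. For \emph{tightness}, the same LCLT-ratio trick reduces the increment bound $m_N[|u_N(t)-u_N(s)|^p]\le C_p(|t-s|+N^{-1})^{p/2}$ to its product-measure counterpart (Rosenthal), so Kolmogorov's criterion applies, the vanishing of the maximal jump $\max_k|n(k)-\overline n|/\sqrt N\to 0$ ensuring the limit is supported on continuous paths. For the \emph{finite-dimensional distributions} at $0<x_1<\dots<x_m\le1$, I would look at the joint unconditioned law of $\bigl(u_N(x_1),q_N(x_1),\dots,u_N(x_m),q_N(x_m),u_N(1)\bigr)$, which by the bivariate central limit theorem converges to the Gaussian vector above, and then condition on $\{S_N=m_0\}$, i.e.\ on its last coordinate. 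Because $\sum_i n(i)$ is integer-valued with span $1$ (as $\nu_a$ has full support on $\N$), the LCLT makes this conditioning on a $\Theta(1/\sqrt N)$-event pass to the limit, yielding the Gaussian vector conditioned on $B^1(1)=0$; reassembling partial sums produces $(\beta,\zeta)$ with exactly the stated covariances.

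The main obstacle is precisely this last commutation: the event $\{\sum_i n(i)=\lfloor N\overline n\rfloor\}$ has vanishing probability $\Theta(1/\sqrt N)$, so conditioning on it does not follow from weak convergence alone. The role of the LCLT, through its lower bound at the mean and its uniform upper bound, is both to carry the moment bounds from the product to the conditioned measure and to justify that the conditional finite-dimensional laws converge to those of the conditioned Brownian motion. Once this is in place, all remaining ingredients (bivariate Donsker, Doob--Rosenthal, Kolmogorov--Chentsov) are standard.
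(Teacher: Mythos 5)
Your proposal is correct and follows essentially the same route as the paper: view $m_N$ as the product measure $\nu_a^{\otimes N}$ conditioned on the total particle number, prove the invariance principle and moment bounds under the product measure, and transfer both to $m_N$ via a local CLT (lower bound on the probability of the conditioning event, uniform upper bound for the complementary half of the coordinates, and a first-half/second-half split for the maximal-function estimates). The only cosmetic difference is that you compute the limiting covariances via the Gaussian conditioning formula $\Cov(X,Y\mid Z=0)=\Cov(X,Y)-\Cov(X,Z)\Cov(Y,Z)/\Var(Z)$, whereas the paper decomposes $Z=\tfrac{\rho}{\alpha}B+Z'$ with $Z'$ independent of $B$ before conditioning; both yield the stated formulas.
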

This result implies that $\frac1{\sqrt{\alpha}} \beta$ is a standard Brownian bridge on $[0,1]$, and that $\zeta - \frac{\rho}{\alpha} \beta$ is a centered Gaussian process independent of $\beta$. 
\begin{proof}
	To prove the convergence of $({u}_N,q_N)$ under $m_N$, one first considers the product law $\mu_N:=\otimes_{k=1}^N \nu_{a}$ with $a=\Phi(\overline{n})$. It is standard to prove that under\footnote{with minor abuse of notation, we let $\mu_N$ denote also the law of $(u_N,q_N)$ when $n_N$ has law $\mu_N$} $\mu_N$, the pair $({u}_N,q_N)$ converges towards a centered Gaussian process $(B,Z)$ with covariance
	\begin{align*}
		\Cov(B(s),B(t)) &= \alpha \min(s,t) \;,\quad \Cov(Z(s),Z(t)) = \gamma \min(s,t) \;,\\
		\Cov(B(s),Z(t)) &= \rho \min(s,t)\;.
	\end{align*}
	($B$ and $Z$ are scaled and correlated Brownian motions). At this point, we observe that $Z = \frac{\rho}{\alpha} B + Z'$ where $Z'$ is a centered Gaussian process independent of $B$. Consequently if we let $(\beta,\zeta)$ be the centered Gaussian process obtained by conditioning $(B,Z)$ to $B(1) = 0$, then $\beta$ is a scaled Brownian Bridge and $\zeta = \frac{\rho}{\alpha} \beta + Z'$, with $\beta$ independent of $Z'$. The covariance structure of the statement follows.\\
	We now concentrate on the proof of the convergence of ${u}_N$ under $m_N$ towards $\beta$: the extension to the convergence of the pair $({u}_N,q_N)$ under $m_N$ is relatively straightforward. By~\cite[Th.1 Chap VII]{Petrov} a Local Limit Theorem holds: for all $x\in (0,1]$, if we let $k_N := \lfloor xN\rfloor$ then
	\begin{eqnarray}
		\label{eq:LLT}
		\sup_{y \in \frac{\bbN - k_N\overline{n}}{\sqrt{N}}} \Big\vert \sqrt N\mu_N(u_N(k_N/N) = y) -  g_{{\alpha x}}(y) \Big\vert \to 0\;,\quad N\to\infty\;,          
	\end{eqnarray}
	where $g_{\sigma^2}$ denotes the density of the centered Gaussian of variance $\sigma^2$. The independence and stationarity of the increments allows to extend this LLT to finite dimensional marginals.\\
	Following an observation made earlier in this subsection and given the definition of ${u}_N$, we have
	$$ m_N = \mu_N (\cdot \mid {u}_N(1) = \delta_N)\;,\quad \delta_N := \frac1{\sqrt N}(\lfloor N\bar n\rfloor - N\bar n)\;.$$
	This allows to deduce the following uniform in $N$ absolute continuity result. There exists a constant $C>0$ such that for all $N$ large enough and all non-negative and measurable map $f:\R^{k_N} \to \R_+$
	\begin{equation}\label{Eq:UnifAbsCont}
		m_N\Big[ f({u}_N(1/N),\ldots, {u}_N(k_N/N))\Big] \le C \mu_N\Big[ f({u}_N(1/N),\ldots, {u}_N(k_N/N))\Big]
	\end{equation}
	where $k_N := \lceil N/2\rceil$.
	Indeed, $C$ can simply be taken to be the supremum over all $N$ large enough of
	$$ \sup_{y \in\frac{\bbN - k_N \overline{n}}{\sqrt{N}}} \frac{\mu_N({u}_N(1)-{u}_N(k_N/N) = -y)}{\mu_N({u}_N(1)=0)} =\sup_{y \in\frac{\bbN - k_N \overline{n}}{\sqrt{N}}} \frac{\mu_N({u}_N(1-k_N/N) = -y)}{\mu_N({u}_N(1)=0)}\;,$$
	and this quantity is finite thanks to \eqref{eq:LLT}.\\
	Let us now establish that the sequence $(u_N)_N$ is tight. Actually, our proof will also establish the moment bounds of the statement on the expectation of the suprema of $u_N$. To that end, we establish an estimate on the H\"older semi-norm of ${u}_N$: for any $\beta \in (0,1/2)$ and $p\ge 1$
	\begin{eqnarray}
		\label{eq:tedioso}
		\limsup_{N\ge 1} \sup_{k \ne \ell \in \{1,\ldots,N\}}m_N\left[\left(\frac{\vert {u}_N(k/N) - {u}_N(\ell/N)\vert}{\vert \frac{k}{N} - \frac{\ell}{N}\vert^{\beta}}\right)^p\right] < \infty\;. 
	\end{eqnarray}
	By symmetry and using the triangle inequality, we can assume that $k,\ell\le N/2$. From the uniform absolute continuity \eqref{Eq:UnifAbsCont}, it suffices to establish \eqref{eq:tedioso} under $\mu_N$ rather than $m_N$. This estimate follows from standard techniques, using the fact that under $\nu_a$, $n-\bar{n}$ admits moments of all orders.\\
	Let us now deduce the convergence of the finite dimensional marginals of ${u}_N$ under $m_N$ towards those of $\beta$. For simplicity, let us concentrate on one-dimensional marginals. Fix $a<b$ and $x\in (0,1)$, and set $k_N := \lfloor xN\rfloor$. We compute
	\begin{align*}
		m_N\big( a < {u}_N(k_N/N) < b \big) &= \sum_{y\in \frac{\bbN - k_N \overline{n}}{\sqrt{N}} : a < y < b} m_N({u}_N(k_N/N) = y)\\
		&= \sum_{y\in \frac{\bbN - k_N \overline{n}}{\sqrt{N}} : a < y < b}  \frac{\mu_N({u}_N(k_N/N) = y)\mu_N({u}_N(1-k_N/N) = -y+\delta_N)}{\mu_N({u}_N(1) = \delta_N)}\;.
	\end{align*}
	Now observe that
	$$\inf_{y \in (a,b)} g_{{\alpha x}}(y) > 0\;,\quad \inf_{y \in (a,b)} g_{{\alpha (1-x)}}(-y+\delta_N) > 0\;,\quad g_{{\alpha}}(\delta_N) \to g_\alpha(0) > 0\;.$$
	Consequently the LLT \eqref{eq:LLT} allows to write (with $o(1)$ being uniform over all $y$ in the sum)
	\begin{align*}
		m_N\big( a < {u}_N(k_N/N) < b \big) &= \sum_{y\in \frac{\bbN - k_N \overline{n}}{\sqrt{N}} : a < y < b}  \frac1{\sqrt{N}}\frac{g_{\alpha x}(y) g_{\alpha (1-x)}(-y+\delta_N)}{g_{{\alpha}}(\delta_N)}(1+o(1))\\
		&\to \int_{a<y<b} \frac{g_{\alpha x}(y) g_{\alpha (1-x)}(-y)}{g_{{\alpha}}(0)} dy = \P(a < \beta(x) < b)\;.
	\end{align*}
	Combined with the bound \eqref{eq:tedioso} already established, this is sufficient to deduce that the law of ${u}_N(x)$ under $m_N$ converges to that of $\beta(x)$.\\
	Let us finally establish the bound on the moments of $\tau(n(1))$. From the absolute continuity \eqref{Eq:UnifAbsCont}, it is sufficient to prove
	$$ \limsup_N \nu_a[\tau(n(1))^p] < \infty\;,$$
	and this follows from the fact that $\tau$ grows at most polynomially at infinity.    
\end{proof}
Note that Lemma \ref{Lemma:Gaussian} implies that Assumption \ref{Ass:State} holds (recall item (3) in the discussion after the formulation of that assumption).

\subsection{The SPDE}\label{Subsec:SPDEZR}

Consider the solution $u$ of
\begin{equation}\label{Eq:SPDEZRP}
	\begin{cases} \partial_t u = c\partial_x^2 u + \sigma {\xi}\;,\quad x\in [0,1]\;,\quad t\ge 0\;,\\
		u(t=0,\cdot) = u_0(\cdot)\;,
		\end{cases}
\end{equation}
endowed with Dirichlet b.c., where $c,\sigma>0$ are some constants, ${\xi}$ is a gaussian space-time white noise and $u_0$ is some initial condition in $E=L^1([0,1],dx)$. (We refer to Subsection \ref{Sec:SPDEPair} for the precise meaning given to such an equation).\\

The process $u$ admits a unique invariant and reversible measure $m_{c,\sigma}$ which is the law of a scaled Brownian bridge (pinned at $0$ at $x \in\{0,1\}$). More precisely, let $Y(x),x\in [0,1]$ be a standard Brownian bridge then $m_{c,\sigma}$ is the law of $\sqrt{\frac{\sigma^2}{2c}} Y$.\\

Since $u$ is a Feller process, it is associated to a strongly continuous semigroup $P_t$ in $L^2(E,m)$, the latter being the space of all maps $F:E \to\R$ that are square integrable against $m$. The general theory ensures that $P_t$ gives rise to a generator $\cL$, a Dirichlet form $\cE$ and resolvents $\cR_\lambda$, $\lambda > 0$.\\

Our next result shows that Assumption \ref{Ass:Charac} is satisfied for the process $u(t,\cdot)$. The class $\cC$ is taken to be the set of all functions $\psi_\varphi(u) := \exp(i \langle u,\varphi\rangle)$, where $\varphi:[0,1]\to\R$ is a $C^2$ function compactly supported in $(0,1)$, and where $\langle \cdot , \cdot \rangle$ is the inner product in $L^2([0,1],dx)$.

\begin{proposition}
  Let $f\in L^2(E,dm)$ be a Lipschitz function from $E$ to $\R$.
  For any $\lambda > 0$, $\cR_\lambda f$ is the unique element of $L^2(E,dm)$ such that for any $C^2$ function $\varphi:[0,1]\to\R$ with compact support in $(0,1)$, the following holds:
	\begin{equation}\label{Eq:CharactResolZRP}
		\int \cR_\lambda f(u) \psi_\varphi(u) \Big( \lambda +\frac{\sigma^2}{2}  \| \varphi\|_{L^2}^2 - ic \langle u,\varphi''\rangle \Big) m(du) = \int f(u) \psi_\varphi(u) m(du)\;.
	\end{equation}
	As a consequence Assumption \ref{Ass:Charac} is satisfied with the class $\cC$ and the measures
	\begin{equation}\label{Eq:SigmaZRP}
		\Sigma^{\psi_\varphi}(du) = \Big( ic \langle u,\varphi''\rangle -\frac{\sigma^2}{2}  \| \varphi\|_{L^2}^2 \Big) dm(u)\;,\quad \psi_\varphi \in \cC\;.
	\end{equation}
\end{proposition}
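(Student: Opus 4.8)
The plan is to exploit reversibility and the Gaussian nature of the invariant measure. Recall that $m=m_{c,\sigma}$ is the law of $\sqrt{\sigma^2/(2c)}\,Y$ with $Y$ a standard Brownian bridge, so $m$ is the centered Gaussian measure on $E$ whose covariance operator is $\frac{\sigma^2}{2c}(-\partial_x^2)^{-1}$ with Dirichlet boundary conditions; moreover $u$ is reversible, hence $\cL$ and the resolvent $\cR_\lambda$ are self-adjoint on $L^2(E,dm)$. I would first record the action of $\cL$ on the test functions $\psi_\varphi$, then obtain the identity \eqref{Eq:CharactResolZRP} by transferring $\cL$ onto $\psi_\varphi$ using self-adjointness, and finally establish the $L^2$-uniqueness through a core argument.

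For the existence half, I would first show that $\psi_\varphi\in\cD(\cL)$ with
\begin{equation*}
\cL\psi_\varphi = \Big(ic\langle u,\varphi''\rangle - \tfrac{\sigma^2}{2}\|\varphi\|_{L^2}^2\Big)\psi_\varphi\,.
\end{equation*}
When $\varphi$ is a finite combination of Dirichlet eigenfunctions $e_k$, this is the explicit Ornstein--Uhlenbeck generator computation on the cylinder function $\psi_\varphi$ (using $-\partial_x^2 e_k=\mu_k e_k$ to identify $\sum_k\mu_k\varphi_k\langle u,e_k\rangle=-\langle u,\varphi''\rangle$); for general $\varphi\in C^2_c(0,1)$ one passes to the limit, or equivalently uses the explicit formula $P_t\psi_\varphi=e^{-\frac12 v_t}\psi_{e^{tc\partial_x^2}\varphi}$ and differentiates at $t=0$. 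Granting this, since $F:=\cR_\lambda f\in\cD(\cL)$ solves $(\lambda-\cL)F=f$ and $\cL$ is self-adjoint, one gets $\int f\psi_\varphi\,dm=\int(\lambda-\cL)F\,\psi_\varphi\,dm=\int F\,(\lambda-\cL)\psi_\varphi\,dm$, which is precisely \eqref{Eq:CharactResolZRP}, and reading off the bracket gives $\Sigma^{\psi_\varphi}$ as in \eqref{Eq:SigmaZRP}. (The same conclusion can be reached from the Dirichlet-form identity \eqref{Eq:CharactDirichlet} combined with the Gaussian integration-by-parts formula for $m$, computing $\cE(\cR_\lambda f,\psi_\varphi)$ directly.)

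For uniqueness, if $G\in L^2(E,dm)$ satisfies \eqref{Eq:CharactResolZRP} with $f=0$, the computation above rewrites this as $\int G\,(\lambda-\cL)\psi_\varphi\,dm=0$ for every $\varphi\in C^2_c(0,1)$. It therefore suffices to prove that $\cD_0:=\mathrm{span}\{\psi_\varphi:\varphi\in C^2_c(0,1)\}$ is a \emph{core} for $\cL$, since then $(\lambda-\cL)\cD_0$ is dense in $L^2(E,dm)$ and $G=0$. The essential point is that, under $m$, the a priori second-order functional $\langle u,\varphi''\rangle$ is controlled at first order: a computation in the eigenbasis (where $\langle u,e_k\rangle$ has variance $\frac{\sigma^2}{2c}\mu_k^{-1}$) yields $\Var_m(\langle u,\varphi''\rangle)=\frac{\sigma^2}{2c}\|\varphi'\|_{L^2}^2$. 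Consequently graph-norm convergence $\psi_{\varphi_m}\to\psi_\phi$ requires only $\varphi_m\to\phi$ in $H^1$. Since the exponentials attached to finite eigenfunction combinations $\phi$ span a subspace that is $(P_t)$-invariant (because $e^{tc\partial_x^2}$ preserves each $e_k$) and dense in $L^2(E,dm)$ — hence a core — and since $C^2_c(0,1)$ is $H^1$-dense in $H^1_0$, approximating each such $\phi$ by $\varphi_m\in C^2_c(0,1)$ shows $\psi_\phi\in\overline{\cD_0}$ in graph norm, so $\cD_0$ is a core.

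The main obstacle is precisely this uniqueness step. The weak identity only tests $F$ against the family $\{(\lambda-\cL)\psi_\varphi:\varphi\in C^2_c(0,1)\}$, and these functions are not obviously total in $L^2(E,dm)$; a naive core argument based on eigenfunction combinations fails because $C^2_c(0,1)$ is \emph{not} dense in the relevant Dirichlet space in $H^2$-norm. What rescues the argument is the variance identity $\Var_m(\langle u,\varphi''\rangle)=\frac{\sigma^2}{2c}\|\varphi'\|_{L^2}^2$, which downgrades the needed approximation from $H^2$ to $H^1$, where density does hold. By comparison, the existence half is routine once the action of $\cL$ on $\psi_\varphi$ is justified, the only delicate point there being the passage from finite-mode cylinder functions to a general $\varphi\in C^2_c(0,1)$.
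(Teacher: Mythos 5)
Your proof is correct, and the existence half is essentially the paper's: the paper transfers $\cL$ onto $\psi_\varphi$ through the Dirichlet-form identity $\cE(\cR_\lambda f,g)=-\int \cR_\lambda f\,\cL g\,dm$ rather than through self-adjointness, and computes $\cL\psi_\varphi$ from the SPDE exactly as you suggest. Where you genuinely diverge is uniqueness. The paper first extends the identity from $\varphi\in C^2_c(0,1)$ to all $C^2$ functions vanishing at the endpoints (cutoff $\chi_n$ plus the bound $m[\sup_{I_n}|u|^2]\lesssim 1/n$ near the boundary), then tests with $\varphi=t e_\mu$ for Dirichlet eigenfunctions, recognizes $(\lambda-\cL^{OU})e^{itx}$ for a one-dimensional Ornstein--Uhlenbeck generator, deduces $m[F\mid\cF_n]=0$ for the $\sigma$-fields generated by finitely many modes, and concludes $F=0$ by martingale convergence since $F$ is $\cF_\infty$-measurable. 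You instead prove directly that $\mathrm{span}\{\psi_\varphi\}$ is a core for the infinite-dimensional $\cL$, via the dense-invariant-subspace criterion applied to eigenfunction exponentials followed by a graph-norm approximation; your variance identity $\Var_m(\langle u,\varphi''\rangle)=\frac{\sigma^2}{2c}\|\varphi'\|_{L^2}^2$ (which is correct, by integration by parts against the bridge) plays exactly the role of the paper's Step 1, reducing the needed approximation to $H^1_0$-density of $C^2_c(0,1)$ --- and you are right that $H^2$-density would fail, since $e_\mu'$ does not vanish at the endpoints. Both routes rest on the same two ingredients (the explicit action of $P_t$ on exponentials, and an approximation of eigenfunctions by compactly supported test functions); yours absorbs the one-dimensional approximation step that the paper leaves as ``by approximation arguments'' for the OU generator into a single abstract core argument and avoids the conditioning/martingale machinery, at the price of having to verify the core criterion's hypotheses (density of the exponentials in $L^2(E,dm)$, which as in the paper's Step 4 uses $m(L^2)=1$, and $P_t$-invariance of the eigenfunction exponentials). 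Both are complete proofs; yours is arguably the more economical of the two.
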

Such a result is certainly not new, but for the sake of completeness we provide a proof. Let us emphasize that the uniqueness part relies on elementary facts on finite-dimensional Ornstein-Uhlenbeck processes.
\begin{proof}
	\textit{Step 1: extension to more general $\varphi$.}
	Assume that some map $F\in L^2(E,dm)$ satisfies for all $C^2$ function $\varphi$ with compact support in $(0,1)$
	\begin{equation}\label{Eq:CharactZRPNew}
		\int F(u) \psi_\varphi(u) \Big( \lambda +\frac{\sigma^2}{2}  \| \varphi\|_{L^2}^2 - ic \langle u,\varphi''\rangle \Big) m(du) = \int f(u) \psi_\varphi(u) m(du)\;.
	\end{equation}
	Let us show that it remains true for all $C^2$ function $\varphi$ that is not necessarily compactly supported in $(0,1)$ but only vanishes at the boundaries. First of all, it is not hard to prove the following moment bound on the Brownian bridge: for any given $p\ge 1$
	$$ m\Big[\sup_{x\in [0,2/n] \cup [1-2/n,1]} |u(x)|^p\Big] \lesssim (1/n)^{p/2}\;.$$
	uniformly over all $n\ge 1$. Now let $\chi_n:[0,1]\to [0,1]$ be a smooth function that equals $1$ on $[2/n,1-2/n]$ and $0$ outside $[1/n,1-1/n]$ and such that there exists $C>0$ such that for all $n\ge 1$
	$$ \sup_{x} |\chi_n'(x)| \le Cn\;,\quad \sup_x | \chi_n''(x)| \le Cn^2\;.$$
	Set $\varphi_n(x) := \varphi(x)\chi_n(x)$. Then \eqref{Eq:CharactZRPNew} evaluated at $\varphi_n$ converges to the same equation with $\varphi$. The only delicate term comes is $\langle u,\varphi\rangle$, let us provide the details. Since $\varphi-\varphi_n$ vanishes outside $I_n := [0,2/n] \cup [1-2/n,1]$ we find
	\begin{align*}
		&\int \big| F(u) \psi_\varphi(u) \langle u,(\varphi''-\varphi_n'')\rangle \big| m(du)\\
		&\le \Big(\int F(u)^2 m(du)\Big)^{1/2} m\Big[\sup_{x\in I_n} |u(x)|^2\Big]^{1/2} \int_{[0,2/n]\cup [1-2/n,1]} \big| \varphi''(1-\chi_n) + 2 \varphi'\chi_n' + \varphi \chi_n''\big|\;.
	\end{align*}
The product of the first two terms go to $0$ as $n\to\infty$. Since $\varphi$ vanishes at $0$ and $1$, it is of order $1/n$ on $I_n$, and the integral can be bounded by a term of order $1$ uniformly over all $n\ge 1$.\\
	\textit{Step 2: existence.}
	The general theory~\cite[Th I.2.8]{MaRockner} ensures that $\cR_\lambda f$ is the only $F\in\cD(\cE)$ satisfying
	\begin{equation}\label{eq:page16}
		\cE(F, g) +\lambda \int F(u) g(u) m(du) = \int f(u) g(u) m(du)\;,
	\end{equation}
	for all $g \in \cD(\cE)$, the latter being the domain of the Dirichlet form. 
	The general theory again ensures that for any $g\in\cD(\cL)$, the domain of the generator, it holds
	$$ \cE(\cR_\lambda f, g) = -\int \cR_\lambda f(u) \cL g(u) m(du)\;.$$
	Assume for now that $\psi_\varphi$ belongs to $\cD(\cL)$, and that
	$$ \cL \psi_\varphi(u) =  \psi_\varphi(u) \Big(  ic \langle u,\varphi''\rangle  -  \frac{\sigma^2}{2} \| \varphi\|_{L^2}^2\Big)\;.$$
	Combining the last three identities, and using the general fact that $\cD(\cL) \subset \cD(\cE)$, we deduce that $\cR_\lambda f$ satisfies \eqref{Eq:CharactResolZRP}.\\
	\textit{Step 3: computation of $\cL \psi_\varphi$.}
	Let us show that $\psi_\varphi$ belongs to $\cD(\cL)$. Instead of identifying explicitly the domain of the generator, we prove that the following convergence holds in $L^2(E,dm)$
	$$\frac{P_t \psi_\varphi - \psi_\varphi}{t}(u) \to  \psi_\varphi(u) \Big(  ic \langle u,\varphi''\rangle  -  \frac{\sigma^2}{2} \| \varphi\|_{L^2}^2\Big)\;,\quad t\downarrow 0\;.$$
	Recall that the Markov process $u(t,\cdot)$ can be built as the solution of \eqref{Eq:SPDEZRP}. From this evolution equation, we deduce that
	$$ d \langle u(t,\cdot),\varphi\rangle = c \langle u(t,\cdot),\varphi''\rangle dt + \sigma \| \varphi \|_{L^2} dB(t)\;,$$
	where $B$ is a Brownian motion. Since $P_t \psi_\varphi(u_0) = \E[e^{i \langle u(t,\cdot),\varphi\rangle} \mid u(t,0)= u_0 ]$, an explicit computation yields the desired result.\\
	\textit{Step 4: uniqueness.}
	Let us show that this is the unique $L^2(E,dm)$ map that satisfies this identity. Given the first step above, it suffices to prove that if $F\in L^2(E,dm)$ satisfies
	$$ \int F(u) \psi_\varphi(u) \Big( \lambda + \frac{\sigma^2}{2} \| \varphi\|_{L^2}^2 - ic \langle u,\varphi''\rangle \Big) m(du) = 0\;,$$
	for all $C^2$ function $\varphi$ that vanishes at the boundaries, then $F=0$.\\
	Let $F$ be such a function. Let $e_\mu$ be a normalized eigenfunction of the Dirichlet Laplacian on $[0,1]$ associated with some eigenvalue $-\mu \le 0$. For $t\in\R$, set $\varphi := t e_\mu$. Then
	$$ \psi_\varphi(u)(\lambda + \frac{\sigma^2}{2} \| \varphi\|_{L^2}^2 - ic \langle u,\varphi''\rangle) = e^{it \langle u,e_\mu\rangle} (\lambda +t^2 \frac{\sigma^2}{2} + i \mu c t \langle u,e_\mu\rangle)\;.$$
	Writing $x := \langle u,e_\mu\rangle$ and $g(x) := e^{itx}$, we observe that the last expression coincides with $(\lambda - \cL^{\tiny OU})g(x)$, where $\cL^{\tiny OU}$ is the generator of the 1-d Ornstein-Uhlenbeck process
	$$ dX(t) = -\mu c X(t) dt + \sigma dB(t)\;.$$
	Its invariant probability measure $m^{\tiny OU}$ is the pushforward of $m$ through $u\mapsto \langle u,e_\mu\rangle$. Let $\cF_\mu$ be the sigma-field on $E$ generated by this last map and let $F_\mu:\R\to\R$ be the measurable map s.t.~$m[F \mid \cF_\mu](u) = F_\mu(\langle u,e_\mu\rangle)$. We have shown that
	\begin{align*}
		\int_\R F_\mu(x) (\lambda - \cL^{\tiny OU})g(x) dm^{\tiny OU}(x) = \int_E F(u) \psi_\varphi(u) \Big( \lambda + \frac{\sigma^2}{2} \| \varphi\|_{L^2}^2 - ic \langle u,\varphi''\rangle \Big) m(du) =0\;.
	\end{align*}
	As a consequence
	$$ \int F_\mu(x) (\lambda - \cL^{\tiny OU})h(x) m^{\tiny OU}(dx) =0\;,$$
	for all $h$ in the vector space spanned by $\{x\mapsto e^{itx}: t\in \R\}$. By approximation arguments, for any $f\in L^2(\R,dm^{\tiny OU})$ there exists a sequence $h_n$ in the latter vector space s.t.~$\|(\lambda - \cL^{\tiny OU}) h_n - f\|_{L^2(\R,dm^{\tiny OU})}$ goes to $0$ as $n\to\infty$ and this suffices to deduce that
	$$ \int F_\mu(x) f(x) m^{\tiny OU}(dx) =0\;.$$
	Consequently $F_\mu = 0$ $m^{\tiny OU}$-a.e., and in turn $m[F \mid \cF_\mu]= 0$ $m$-a.e.\\
	The very same arguments applies if we consider finitely many $e_{\mu_i}$ simultaneously. More precisely, if we enumerate the eigenvalues of the Dirichlet Laplacian $0 > -\mu_1 > -\mu_2 > \ldots$ and if we let $\cF_n$ be the sigma-field on $E$ generated by the maps $u\mapsto \langle u,e_{\mu_1}\rangle$, $\ldots$, $u\mapsto \langle u,e_{\mu_n}\rangle$, then $m[F \mid \cF_n]= 0$ $m$-a.e.~for any $n\ge 1$. As a consequence $m[F \mid \cF_\infty]= 0$ $m$-a.e., where $\cF_\infty$ is the smallest $\sigma$-field generated by all the $\cF_n$, $n\ge 1$ augmented with all $m$-null sets. Now we observe that $L^2 \subset L^1 = E$ and that $m(L^2) = 1$. Since the $(e_{\mu_n})_{n\ge 1}$ is a Hilbert basis of $L^2$, we deduce that for $m$-a.e.~$u$
	$$ F(u) = F\Big(\sum_{n\ge 1} \langle u,e_{\mu_n}\rangle e_{\mu_n}\Big)\;.$$
	We thus deduce that $F$ coincides $m$-a.e.~with a $\cF_\infty$-measurable map. Consequently $F=m[F \mid \cF_\infty]= 0$ $m$-a.e.
    \end{proof}

\subsection{Equicontinuity}\label{Subsec:EquiZRP}

Assume that for any $v,w \in \Omega_{N,\overline{n}}$ there exists a coupling of the processes $u_N^{v}$ and $u_N^w$ starting respectively from $v$ and $w$ such that:
\begin{eqnarray}
  \label{eq:contrazione}
\E[ \|u_N^v(t,\cdot) - u_N^w(t,\cdot)\|_{L^1} ] \le \|v-w\|_{L^1}\;.  
\end{eqnarray}
Then we deduce that for any $f\in \Lip(E_N)$
\begin{align*}
	\vert \cR^N_\lambda f(v) - \cR^N_\lambda f(w) \vert &\le \int_0^\infty e^{-\lambda t} \E[\vert f(u_N^v(t,\cdot)) - f(u_N^w(t,\cdot)) \vert] dt\\
	&\le  [f]_{\Lip(E_N)} \int_0^\infty e^{-\lambda t} \E[\| u_N^v(t,\cdot) - u_N^w(t,\cdot)\|_{L^1} ] dt\\
	&\le \lambda^{-1} [f]_{\Lip(E_N)} \|v-w\|_{L^1}\;,
\end{align*}
and Assumption \ref{Ass:Equi} is proved.

  \begin{remark}
    Note that, if the r.h.s. of  \eqref{eq:contrazione} were  replaced by $\|v-w\|_{L^1}$ times a function that  grows less than exponentially in $t$, Assumption
    \ref{Ass:Equi} would still follow.
  \end{remark}

We are thus left with constructing the above coupling. Actually we will construct a coupling of all processes $u_N^{v}$, $v\in \Omega_{N,\overline{n}}$ such that
\begin{enumerate}
	\item it preserves the ordering of the height functions: if $v \ge w$ then $u_N^v(t,\cdot) \ge u_N^w(t,\cdot)$ at all times $t\ge 0$,
	\item for any $v,w$, at all times $t\ge 0$ $\E[ \|u_N^v(t,\cdot) - u_N^w(t,\cdot)\|_{L^1} ] \le \|v-w\|_{L^1}$.
\end{enumerate} 
To do so, consider a collection of independent Poisson clocks $P_{i,i+1}^{k}$, $i\in\{1,\ldots,n-1\}$, $k\ge 1$ and $P_{i,i-1}^k$, $i\in\{2,\ldots,n\}$, $k\ge 1$ of rates $\frac12 (\tau(k) - \tau(k-1))$. With these Poisson clocks at hand, we can construct the processes $n_N^v$ in the following way. If {$P_{i,i\pm 1}^k$} rings at time $t$ then for all $v$ such that $n_N^{v}(t_-,i) \ge k$ we apply the following transition: $n_N^{v}(t,i) = n_N^{v}(t_-,i)-1$ and $n_N^{v}(t,i\pm1) = n_N^{u_0}(t_-,i\pm1)+1$.\\
It is then elementary to check that this coupling preserves the ordering of the height functions. Regarding the second property, we first consider the particular case where $v \ge w$. Then at all times $t\ge 0$
$$ \| u_N^w(t,\cdot) - u_N^v(t,\cdot)\|_{L^1} = \sum_{i=1}^N \frac1{N} \Big(u_N^v(t,i) - u_N^w(t,i)\Big)\;.$$
Consequently
$$ \partial_t \E[ \|u_N^v(t,\cdot) - u_N^w(t,\cdot)\|_{L^1} ] = \sum_{i=1}^{N-1} \frac1{N} \E[ \cL_N \Big(u_N^v(t,i)- u_N^w(t,i)\Big)] \;.$$
Since
$$ \E[\cL_N  u_N^v(t,i)] = \frac{N^{3/2}}2 \E[\tau(n_N^v(t,i+1)) - \tau(n_N^v(t,i))] \;,$$
and similarly with $w$, we obtain
\begin{multline} \partial_t \E[ \|u_N^v(t,\cdot) - u_N^w(t,\cdot)\|_{L^1} ]\\ = \frac{\sqrt N}2 \E[\tau(n_N^v(t,N))-\tau(n_N^w(t,N)) ] - \frac{\sqrt N}2 \E[\tau(n_N^v(t,1))-\tau(n_N^w(t,1))]\;.
\end{multline}
Since $u_N^v(t,\cdot) \ge u_N^w(t,\cdot)$ and since the total number
of particles is the same in both configurations, we deduce that
$$ n_N^v(t,N) \le n_N^w(t,N) \;,\quad n_N^v(t,1) \ge n_N^w(t,1)\;.$$
Since $k\mapsto \tau(k)$ is non-decreasing, we deduce that $\partial_t \E[ \|u_N^v(t,\cdot) - u_N^w(t,\cdot)\|_{L^1} ] \le 0$ so  that $\E[ \|u_N^v(t,\cdot) - u_N^w(t,\cdot)\|_{L^1} ] \le \|v-w\|_{L^1}$.\\
Let us now consider the general case where $v,w$ are not necessarily ordered. We introduce the configurations
$$ \max(i) := v(i) \vee w(i)\;,\quad \min(i) := v(i) \wedge w(i)\;.$$
Our coupling ensures that
$$ u_N^{\min}(t,\cdot) \le u_N^v(t,\cdot), u_N^w(t,\cdot) \le u_N^{\max}(t,\cdot)\;.$$
In addition
$$ \| v-w \|_{L^1} = \|\max - \min \|_{L^1}\;.$$
From the first part of the argument, we thus deduce
$$ \E[ \|u_N^v(t,\cdot) - u_N^w(t,\cdot)\|_{L^1} ] \le \E[ \|u_N^{\max}(t,\cdot) - u_N^{\min}(t,\cdot)\|_{L^1} ] \le \|\max - \min \|_{L^1} =  \| v-w \|_{L^1}\;.$$

\subsection{Discrete characterization and convergence}\label{Subsec:IbPFZRP}

The goal of this subsection is to check that Assumption \ref{Ass:IbPF} is fulfilled. Recall the parameters introduced above Lemma \ref{Lemma:Gaussian}. We set
\[
c := \frac{\rho}{2\alpha} \;,\quad \sigma := \sqrt{\overline{\tau}}\;,
\]
and we let $m = m_{c,\sigma}$ be the law on $E$ of a scaled Brownian bridge, as introduced in Subsection \ref{Subsec:SPDEZR}.

Recall that $\cC$ is the set of all maps $\psi_\varphi(u) = \exp( i \langle u,\varphi \rangle)$, $u\in E$, where $\varphi$ is a $C^2$ function from $[0,1]$ into $\R$ compactly supported in $(0,1)$. Recall the definition of the measure $\Sigma^{\psi_\varphi}$ from Subsection \ref{Subsec:SPDEZR}
\begin{equation*}
	\Sigma^{\psi_\varphi}(du) = \Big( ic \langle u,\varphi''\rangle -\frac{\sigma^2}{2}  \| \varphi\|_{L^2}^2 \Big) m(du)\;.
\end{equation*}
We set
$$ \psi_\varphi(u) \Sigma_N^{\psi_\varphi}(du) := \mathcal{L}_N \psi_\varphi(u)\cdot m_N(du)\;.$$

\begin{proposition}\label{Prop:DiscreteIbPF}
	The measure $\Sigma_N^{\psi_\varphi}$ converges weakly towards $\Sigma^{\psi_\varphi}$ in the set of finite complex measures on $E$.
\end{proposition}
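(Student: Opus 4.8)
The plan is to compute the density $g_N := \cL_N\psi_\varphi/\psi_\varphi$ of $\Sigma_N^{\psi_\varphi} = g_N\,m_N$ explicitly, expand it to second order, and then test against an arbitrary bounded continuous $\Psi:E\to\C$; weak convergence amounts to $m_N[\Psi(u_N)\,g_N]\to\int_E\Psi\,d\Sigma^{\psi_\varphi}$. Writing $\overline\varphi_k := N\int_{k/N}^{(k+1)/N}\varphi$, and recalling that a jump across the bond $\{k,k+1\}$ shifts $\langle u,\varphi\rangle$ by exactly $\mp\overline\varphi_k N^{-3/2}$, summing the generator over all allowed transitions and factoring out $\psi_\varphi(u)$ gives
\[
g_N(u) = \frac{N^2}{2}\Big[\sum_{k=1}^{N-1}\tau(n(k))\big(e^{-i\overline\varphi_k/N^{3/2}}-1\big) + \sum_{k=2}^{N}\tau(n(k))\big(e^{+i\overline\varphi_{k-1}/N^{3/2}}-1\big)\Big].
\]

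Next I would Taylor expand each exponential to second order. The cubic remainder is deterministically $O(N^{-9/2})$ per bond, so after multiplication by the rate and summation it is bounded by $CN^{-3/2}\,\tfrac1N\sum_k\tau(n(k))$, which vanishes in $L^1(m_N)$ by the bound $\sup_N m_N[\tau(n(1))^p]<\infty$ of Lemma \ref{Lemma:Gaussian}. The linear (imaginary) part telescopes: substituting $\tau(n(k+1))-\tau(n(k)) = \sqrt N\,(Q_{k+1}-2Q_k+Q_{k-1})$ with $Q_k := q_N(k/N)$, it equals $\tfrac{iN}{2}\sum_k\overline\varphi_k(Q_{k+1}-2Q_k+Q_{k-1})$; summing by parts twice (boundary terms vanish for $N$ large since $\varphi$ is compactly supported in $(0,1)$) transfers the discrete Laplacian onto $\overline\varphi$, and using $\overline\varphi_{k+1}-2\overline\varphi_k+\overline\varphi_{k-1} = N^{-2}\varphi''(k/N)+o(N^{-2})$ uniformly I obtain $\tfrac i2\langle q_N,\varphi''\rangle$ up to an error that is $o(1)$ in $L^1(m_N)$, the moment bound on $\sup_x|q_N|$ controlling $\tfrac1N\sum_k|Q_k|$. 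Note that this summation by parts is also what shows $g_N=O(1)$ pointwise, hence that the family $\Sigma_N^{\psi_\varphi}$ has uniformly bounded total variation and is tight. Finally, the quadratic (real) part is $-\tfrac1{4N}\sum_k\tau(n(k))(\overline\varphi_k^2+\overline\varphi_{k-1}^2)$, and a law of large numbers — the variance vanishing by the product structure of $\mu_N$ together with the uniform absolute continuity \eqref{Eq:UnifAbsCont}, and the mean converging by the local limit theorem — shows it converges in $L^1(m_N)$ to the constant $-\tfrac{\overline\tau}2\|\varphi\|_{L^2}^2 = -\tfrac{\sigma^2}2\|\varphi\|_{L^2}^2$.

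Collecting these steps, $g_N = \tfrac i2\langle q_N,\varphi''\rangle - \tfrac{\sigma^2}2\|\varphi\|_{L^2}^2 + r_N$ with $r_N\to0$ in $L^1(m_N)$, so that
\[
m_N[\Psi(u_N)\,g_N] = m_N\Big[\Psi(u_N)\,\tfrac i2\langle q_N,\varphi''\rangle\Big] - \tfrac{\sigma^2}2\|\varphi\|_{L^2}^2\,m_N[\Psi(u_N)] + o(1).
\]
The second term converges to $-\tfrac{\sigma^2}2\|\varphi\|_{L^2}^2\,m[\Psi]$ since $m_N$ converges weakly to $m$. For the first term I would invoke the joint convergence in law $(u_N,q_N)\to(\beta,\zeta)$ under $m_N$ from Lemma \ref{Lemma:Gaussian}: as $\Psi$ and $u\mapsto\langle u,\varphi''\rangle$ are continuous and the moment bounds give uniform integrability, it converges to $\tfrac i2\E[\Psi(\beta)\langle\zeta,\varphi''\rangle]$. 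This is the heart of the matter and the substitute for the Boltzmann–Gibbs principle: writing $\zeta = \tfrac\rho\alpha\beta + Z'$ with $Z'$ centered and independent of $\beta$ (again Lemma \ref{Lemma:Gaussian}), the independent part integrates out, $\E[\Psi(\beta)\langle Z',\varphi''\rangle] = \E[\Psi(\beta)]\,\E[\langle Z',\varphi''\rangle] = 0$, leaving $\tfrac i2\tfrac\rho\alpha\E[\Psi(\beta)\langle\beta,\varphi''\rangle] = ic\int_E\Psi(u)\langle u,\varphi''\rangle\,m(du)$ with $c = \rho/(2\alpha)$. Adding the two contributions gives exactly $\int_E\Psi(u)\big(ic\langle u,\varphi''\rangle - \tfrac{\sigma^2}2\|\varphi\|_{L^2}^2\big)m(du) = \int_E\Psi\,d\Sigma^{\psi_\varphi}$, the claimed weak convergence.

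The main obstacle, both conceptually and technically, is this last replacement of $q_N$ by $c\,u_N$. The field $q_N$ does not converge to a deterministic multiple of $u_N$ pathwise, and in the classical approach discarding its independent fluctuation would require a Boltzmann–Gibbs estimate. The gain here is that one only ever tests against functions $\Psi(u_N)$ of the height field, so the excess fluctuation $Z' = \zeta - \tfrac\rho\alpha\beta$, being asymptotically independent of $\beta$ and centered, integrates to zero for free. The remaining ingredients — uniform control of the Taylor remainder and the law of large numbers for the quadratic term — are routine, relying only on the polynomial growth of $\tau$, the moment bounds, and the uniform absolute continuity of $m_N$ with respect to $\mu_N$ established in the proof of Lemma \ref{Lemma:Gaussian}.
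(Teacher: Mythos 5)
Your proof is correct and follows essentially the same route as the paper's: Taylor-expand $\cL_N\psi_\varphi$, convert the linear term into $\tfrac{i}{2}\langle q_N,\varphi''\rangle$ by discrete summation by parts, and use the joint limit $(u_N,q_N)\to(\beta,\zeta)$ of Lemma \ref{Lemma:Gaussian} together with the decomposition $\zeta=\tfrac{\rho}{\alpha}\beta+Z'$ (with $Z'$ centered and independent of $\beta$) to integrate out the part of $q_N$ not explained by $u_N$. The only, harmless, deviation is in the quadratic term, where you invoke a law of large numbers under the product measure $\mu_N$ plus the absolute continuity \eqref{Eq:UnifAbsCont} (note you must split the sum into the two halves of the lattice for \eqref{Eq:UnifAbsCont} to apply), whereas the paper instead sums by parts once more and kills the fluctuation using the moment bound on $\sup_x|q_N(x)|$.
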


\begin{proof}
	We need to show that for any bounded and continuous function $F:E\to\R$
	$$ \int_E F(u) \Sigma_N^{\psi_\varphi}(du) \to \int_E F(u) \Sigma^{\psi_\varphi}(du)\;,\quad N\to\infty\;.$$
	From now on, $F$ will implicitly be such a function. Moreover, $\varphi$ is a $C^2$ function compactly supported in $(0,1)$. In the computations below, some boundary terms will therefore vanish provided $N$ is large enough: we will always assume this is the case, and we will not display these boundary terms.\\
	We have
	\begin{align*}
		\mathcal{L}_N \psi_\varphi(u) &= \sum_{k = 1}^{N-1} N^2\tau(n_k) \psi_\varphi(u) \left( \frac{e^{\frac{i}{\sqrt{N}}\int_{\frac{k-1}{N}}^{\frac{k}{N}} \varphi(x)dx} + e^{-\frac{i}{\sqrt{N}}\int_{\frac{k}{N}}^{\frac{k+1}{N}} \varphi(x)dx}}{2} -1 \right)\\
		&= \sum_{k = 1}^{N-1} N^2\tau(n_k) \psi_\varphi(u) (I_1 + I_2 + I_3)\;,
	\end{align*}
	where
	$$ I_1 = - \frac{i}{2N^{3/2}} \Big(\varphi\Big(\frac{k}{N}\Big) - \varphi\Big(\frac{k-1}{N}\Big)\Big) \;,\quad I_2 = - \frac1{2N^3} \varphi^2\Big(\frac{k}{N}\Big) \;,\quad I_3 = O(N^{-7/2})\;.$$
	Let us control the term provided by $I_2$, that is
	\begin{align*}
		&-\int_E \frac1{2N} \sum_{k = 1}^{N-1} \tau(n_k) \psi_\varphi(u) \varphi^2(k/N) F(u) m_N(du)\\
		=& -\frac{\overline{\tau}}{2} \int_E \frac1{N} \sum_{k = 1}^{N-1} \psi_\varphi(u) \varphi^2(k/N) F(u) m_N(du)\\
		&-\frac1{2N} \int_E \sum_{k = 1}^{N-1} (\tau(n_k)-\overline{\tau}) \psi_\varphi(u) \varphi^2(k/N) F(u) m_N(du)\;.
	\end{align*}
	The first term on the r.h.s.~converges towards
	$$ - \frac{\overline{\tau}}{2} \int_E F(u)  \|\varphi\|^2 m(du)\;.$$
	The second term equals
	\begin{align*}
		-\frac1{2\sqrt{N}} \int_E \psi_\varphi(u) F(u) \Big( \sum_{k = 1}^{N-1} q_N(\frac{k}{N}) [ \varphi^2(\frac{k}{N}) -  \varphi^2(\frac{k+1}{N})] \Big)m_N(du)\;.
	\end{align*}
	The last bound stated in Lemma \ref{Lemma:Gaussian} ensures that this quantity converges to $0$ as $N\to\infty$.
	Regarding $I_1$, we first compute
	\begin{align*}
	&-\sum_{k = 1}^{N-1} N^2\tau(n_k) \psi_\varphi(u) \frac{i}{2N^{3/2}} \big(\varphi(k/N) - \varphi((k-1)/N)\big)\\
	= &-\frac{i}{2N}\sum_{k = 1}^{N-1} N^{1/2}(\tau(n_k)-\overline{\tau}) \psi_\varphi(u)  N \big(\varphi(k/N) - \varphi((k-1)/N)\big)\\
	=&\frac{i}{2N}\sum_{k = 1}^{N-1} q_N(k/N) N^2\Delta_N \varphi(k/N) \psi_\varphi(u)\;,
	\end{align*}
	where $\Delta_N \varphi(k/N) = \varphi((k+1)/N) - 2  \varphi(k/N) +  \varphi((k-1)/N)$. By Lemma \ref{Lemma:Gaussian}, this expression, multiplied by $F$ and integrated against $m_N$, converges to
	$$ \frac{i}{2} \E\big[ F(\beta) \langle \zeta, \varphi''\rangle \psi_\varphi(\beta) \big]\;.$$
	Since $\zeta- \frac{\rho}{\alpha} \beta$ is a centered Gaussian process independent of $\beta$, we deduce that this last expression coincides with
	$$ \frac{i}{2} \E\big[ F(\beta) \langle \frac{\rho}{\alpha}\beta , \varphi''\rangle \psi_\varphi(\beta) \big] = \frac{i}{2} \int F(u) \frac{\rho}{\alpha}\langle u, \varphi''\rangle \psi_\varphi(u) m(du)\;.$$
	Finally, the term $I_3$ has a negligible contribution. 
\end{proof}

\subsection{Tightness}\label{Subsec:TightZRP}

Let $H^{-\beta}$ be the Sobolev space of all distributions $f$ on $(0,1)$ such that
$$ \|f\|_{H^{-\beta}}^2 := \sum_{n\ge 1} n^{-2\beta} \langle f,e_n\rangle^2 < \infty\;,$$
where $(e_n)_{n\ge 1}$ is the usual eigenbasis of the Dirichlet Laplacian on $[0,1]$.

Note that $L^1$ is continuously embedded in $H^{-\beta}$ as soon as $\beta > 1/2$. The next proposition shows that Assumption \ref{Ass:Tight} is satisfied with $E^-=H^{-\beta}$ for any $\beta > 1$.

\begin{proposition}
	Take $\beta > 1$. The sequence $u_N$, $N\ge 1$ is tight in $\bbD([0,\infty),H^{-\beta})$ and all limiting points are in $C([0,\infty),H^{-\beta})$.
\end{proposition}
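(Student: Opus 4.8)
The plan is to work on a fixed finite horizon $[0,T]$ (a diagonal argument then upgrades to $[0,\infty)$) and to establish a Kolmogorov-type moment bound directly in $H^{-\beta}$, the key input being the Lyons--Zheng decomposition~\cite{LyonsZheng}, which is available since $u_N$ is stationary and reversible under $m_N$. Writing $f_n(u):=\langle u,e_n\rangle$ and applying the decomposition on a subinterval $[s,t]\subset[0,T]$, I would represent the coordinate increment as
$$\langle u_N(t)-u_N(s),e_n\rangle=\tfrac12\big(M^{n}_t-M^{n}_s\big)+\tfrac12\,\overline M^{\,n}_{[s,t]}\;,$$
where $M^n$ is the forward martingale associated with $f_n$ and $\overline M^{\,n}_{[s,t]}$ is the corresponding time-reversed martingale increment. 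Reversibility guarantees that the reversed process has the same law, so both martingale pieces have predictable bracket equal to $\int_s^t\Gamma(f_n)(u_N(r))\,dr$, where $\Gamma$ is the carré du champ of $\cL_N$. The whole point of using Lyons--Zheng here is to eliminate the drift $\int_s^t\cL_N f_n(u_N(r))\,dr$, which is not directly controllable, and to reduce everything to martingale increments over \emph{deterministic} times, so that no optional-stopping or backward-filtration subtlety arises.

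The heart of the matter is then a uniform estimate on the bracket. A direct computation of the carré du champ for a pure-jump generator gives $\Gamma(f_n)(u)=\sum_{i}\mathrm{rate}_i\,(\Delta_i f_n)^2$; since a single particle jump at site $i$ changes $f_n$ by at most $\sqrt2\,N^{-3/2}$ while its rate is $\tfrac12N^2\tau(n(i))$, I obtain the $n$-independent bound $\Gamma(f_n)(u)\le \tfrac2N\sum_{i=1}^N\tau(n(i))$. Using convexity and the exchangeability of $m_N$ in the sites, together with the moment bound $\sup_N m_N[\tau(n(1))^p]<\infty$ from Lemma \ref{Lemma:Gaussian}, this yields $\sup_n\sup_N m_N[\Gamma(f_n)^p]\le C_p$ for every $p\ge1$. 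Feeding this into the Burkholder--Davis--Gundy inequality (the jumps being uniformly $O(N^{-3/2})$, optional and predictable brackets are interchangeable) and using stationarity to bound $m_N[(\int_s^t\Gamma(f_n)(u_N(r))\,dr)^p]\le |t-s|^p\, m_N[\Gamma(f_n)^p]$, I get, uniformly in $n$ and $N$,
$$m_N\big[\,|\langle u_N(t)-u_N(s),e_n\rangle|^{2p}\big]\le C_p\,|t-s|^p\;.$$
A Minkowski inequality in $L^p$ over the index $n$ then upgrades this to $m_N[\|u_N(t)-u_N(s)\|_{H^{-\beta}}^{2p}]\le C'_p\,|t-s|^p$ as soon as $\sum_n n^{-2\beta}<\infty$, i.e.\ for any $\beta>1/2$, in particular throughout the range $\beta>1$ of the statement.

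To conclude, fix $p\ge2$ so that the exponent $p$ exceeds $1$. A Cauchy--Schwarz step converts the two-point bound into the three-increment bound $m_N[\|u_N(t)-u_N(s)\|^{p}_{H^{-\beta}}\,\|u_N(s)-u_N(r)\|^{p}_{H^{-\beta}}]\le C\,(t-r)^{p}$ for $r\le s\le t$, which, together with tightness of the one-time marginals $m_N$ in $H^{-\beta}$ (immediate from the same estimates and Lemma \ref{Lemma:Gaussian}), verifies the hypotheses of the Kolmogorov--Chentsov criterion for tightness in $\bbD([0,T],H^{-\beta})$. That criterion already forces all limit points to be continuous; alternatively, and more concretely, each jump of $u_N$ is a single bump of height $N^{-1/2}$ on a cell of width $N^{-1}$, hence of $H^{-\beta}$-size at most $C N^{-3/2}$, so that $\sup_{t\le T}\|u_N(t)-u_N(t^-)\|_{H^{-\beta}}\to0$ deterministically and the limits lie in $C([0,T],H^{-\beta})$. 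A standard diagonal argument over $T\uparrow\infty$ then yields tightness in $\bbD([0,\infty),H^{-\beta})$ with continuous limit points, which is exactly Assumption \ref{Ass:Tight}.

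The main obstacle, and the only genuinely model-specific step, is the uniform control of $m_N[\Gamma(f_n)^p]$: everything hinges on the carré du champ being dominated, independently of $n$, by an empirical average of the jump rates $\tau(n(i))$ whose moments are uniformly bounded under $m_N$ — precisely the last estimate in Lemma \ref{Lemma:Gaussian}. The remainder is soft once one commits to measuring increments in the weak norm $H^{-\beta}$ and to exploiting reversibility through Lyons--Zheng; in particular, working in a negative Sobolev space rather than in $L^1$ is what makes the bracket summable over the Fourier modes $e_n$.
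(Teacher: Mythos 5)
Your proposal is correct and follows essentially the same route as the paper: the Lyons--Zheng decomposition to eliminate the drift, BDG applied to the forward and backward martingale halves, the bracket dominated (via the carré du champ) by the empirical average of the jump rates $\tau(n(i))$, exchangeability of $m_N$ together with the last moment bound of Lemma \ref{Lemma:Gaussian}, and Minkowski over the Fourier modes $e_n$. The one caveat is your parenthetical claim that the three-increment Kolmogorov--Chentsov criterion by itself forces continuity of limit points --- it does not (a Poisson process satisfies it) --- but your fallback argument via the deterministic $O(N^{-3/2})$ bound on the jump sizes is exactly what is needed, so the conclusion stands; the paper instead keeps the supremum over $[s,s+h]$ inside the BDG estimate and invokes the criterion of \cite[Thm~13.2]{Billingsley}, which delivers tightness and continuity of the limits in a single step.
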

\begin{proof}
	To obtain tightness in $\bbD([0,\infty), H^{-\beta})$ with limiting points in $\bbC([0,\infty),H^{-\beta})$ it suffices to show that the sequence of initial laws $u_N(0,\cdot)$ is tight and that for all $T,\delta > 0$ we have
	\begin{equation}\label{Eq:TightnessZRP}
		\lim_{h\downarrow 0} \varlimsup_{N\rightarrow\infty} \frac{1}{h} \sup_{0 \leq s \leq T} \bbP\Big( \sup_{s\leq t \leq s+h} \| u_N(t) - u_N(s) \|_{H^{-\beta}} > \delta \Big) = 0\;,
	\end{equation}
	see for instance~\cite[Thm~13.2]{Billingsley}. The former is immediate since we start from the stationary measure $m_N$ that weakly converges to $m$. To check the latter condition, we apply the so-called Lyons-Zheng decomposition~\cite{LyonsZheng} that exploits the reversibility of the process w.r.t.~its invariant measure. Fix some arbitrary $T>0$. Let $\cF_t,t\geq 0$ be the filtration associated with $u_N(t),t\geq 0$, and let $\tilde{\cF}_t,t\in[0,T]$ be the filtration associated with the reversed process $u_N(T-t),t\in[0,T]$. For any $n\ge 1$, the process $\hat{u}_N(t,n):=\langle u_N(t), e_n\rangle$ satisfies
	\begin{equation}
		\hat{u}_N(t,n)-\hat{u}_N(s,n) = \int_s^t \cL_N \hat{u}(r,n) dr + M_N(t,n) - M_N(s,n)\;,
	\end{equation}
	where $M_N(t,n)$ is an $\cF_t$-martingale. Using the reversibility of the process, it is simple to get the identity
	\begin{equation}
		\hat{u}_N(T-(T-s),n)-\hat{u}_N(T-(T-t),n) = \int_{T-t}^{T-s} \cL_N \hat{u}_N(T-r,n) dr + \tilde{M}_N(T-s,n) - \tilde{M}_N(T-t,n)\;,
	\end{equation}
	where $\tilde{M}_N(t,n)$ is an $\tilde{\cF}_t$ martingale. Therefore, 
	\begin{equation}
		\hat{u}_N(t,n)-\hat{u}_N(s,n) =\frac12\Big( M_N(t,n) - M_N(s,n) + \tilde{M}_N(T-t,n) -\tilde{M}_N(T-s,n) \Big)\;.
	\end{equation}
	Using the Burkh\"older-Davis-Gundy inequality (see for instance~\cite{BDG}) to the martingales we deduce that for all $p\geq 2$
	\begin{align*}
		\bbE^N \Big[\sup_{t\in [s,s+h]}\big|M_N(t,n) - M_N(s,n)\big|^p\Big]^{\frac{1}{p}} &\lesssim \bbE^N \Big[\sup_{t\in [s,s+h]} \big\langle M_N(\cdot,n) - M_N(s,n)\big\rangle_t^{\frac{p}{2}}\Big]^{\frac{1}{p}}\\
		&\;+ \bbE^N\Big[\sup_{r\in(s,s+h]} \big|M_N(r,n)-M_N(r-,n)\big|^p\Big]^{\frac{1}{p}}\;,
	\end{align*}
	uniformly over all $N\geq 1$, all $s,h \ge 0$ and all $n\geq 1$. The jumps of $M_N$ are of size at most $N^{-3/2} \vert e_n' \vert=O(n N^{-3/2})$  so that the second term is easily bounded. Regarding the first term, observe that the predictable bracket is non-decreasing in time so that it suffices to take $t=s+h$. Now observe that there exists $C>0$ such that for all $N$ and $n$
	\begin{align*}
	\big\langle M_N(\cdot,n) - M_N(s,n)\big\rangle_{s+h} &= \int_s^{s+h} \frac1{N} \sum_{k=1}^N \frac12 N^2 \tau({n(r,k)} \Big( \Big(e_n(\frac{k}{N}) - e_n(\frac{k-1}{N})\Big)^2\\ + \Big(e_n(\frac{k+1}{N}) - e_n(\frac{k}{N})\Big)^2 \Big) dr
	&\le C n^2\int_s^{s+h}  \frac1{N} \sum_{k=1}^N\tau({n(r,k)}) dr.
	\end{align*}
	By Jensen's inequality
	$$ \Big\vert \int_s^{s+h}  \frac1{N} \sum_{k=1}^N\tau({n(r,k)}) dr \Big\vert^{\frac{p}{2}} \le h^{\frac{p}{2}-1} \int_s^{s+h}  \frac1{N} \sum_{k=1}^N\tau({n(r,k)})^{p/2} dr\;.$$
	Noting that the law of $\{n(i), i\le N\}$ is exchangeable under $m_N$ we get
	$$ \bbE^N \Big[\big\langle M_N(\cdot,n) - M_N(s,n)\big\rangle_{s+h}^{\frac{p}{2}}\Big]^{\frac{1}{p}} \lesssim h^{\frac12} n\,m_N[(\tau({n(1)}))^{p/2}]^{1/p}\;.$$
	The moment on the r.h.s.~is controlled by the last estimate of Lemma \ref{Lemma:Gaussian}. Consequently,
	\[\bbE^N \Big[\sup_{t\in [s,s+h]}\big|M_N(t,n) - M_N(s,n)\big|^p\Big]^{\frac{1}{p}} \lesssim n(h^{\frac{1}{2}} + N^{-\frac{3}{2}})\;.\]
	We deduce that
	\begin{equation*}
		\bbE^N \Big[\sup_{s\leq t \leq s+h} \big|\hat{u}_N(t,n)-\hat{u}_N(s,n)\big|^p\Big]^{\frac{1}{p}} \lesssim n(h^{\frac{1}{2}} + N^{-\frac{3}{2}})\;,
              \end{equation*}
	uniformly over all $N\geq 1$, $s\in [0,T-h]$ and $n\geq 1$. Applying the triangle inequality w.r.t.~the $L^{p/2}(\Omega)$-norm at the second line we get
	\begin{align*}
		\bbE^N \Big[\sup_{s\leq t \leq s+h} \big\|u_N(t)-u_N(s)\big\|^p_{H^{-\beta}}\Big]^{\frac{1}{p}} &\leq \bbE^N \bigg[\Big(\sum_{n\geq 1} n^{-2\beta}\sup_{s\leq t \leq s+h} \big|\hat{u}_N(t,n)-\hat{u}_N(s,n)\big|^{2}\Big)^{p/2}\bigg]^{\frac{1}{p}}\\
		&\leq \bigg(\sum_{n\geq 1} n^{-2\beta} \bbE^N \Big[\sup_{s\leq t \leq s+h} \big|\hat{u}_N(t,n)-\hat{u}_N(s,n)\big|^{p}\Big]^{\frac{2}{p}}\bigg)^{\frac{1}{2}}\\
		&\lesssim h^{\frac{1}{2}} + N^{-\frac{3}{2}}\;,
	\end{align*}
    if $2\beta-1>1$ so that \eqref{Eq:TightnessZRP} follows provided $p\ge 3$.
\end{proof}

We have therefore completed the proof of the following result.

\begin{theorem}\label{Th:ZRP}
	The process $(u_N(t), t\ge 0)$, starting from its stationary measure $m_N$, converges in law to the process $(u(t),t\ge 0)$, starting from $m_{c,\sigma}$, in the space $\bbD([0,\infty),H^{-\beta}([0,1]))$ for any given $\beta > 1$.
\end{theorem}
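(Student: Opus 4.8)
The plan is to invoke the black box Theorem \ref{Th:BlackBox} directly: all five of its hypotheses have been established, one by one, in the preceding subsections, so the proof reduces to collecting these verifications and checking that the discrete and continuous objects are correctly matched. Concretely, the limit $u$ is the solution of the stochastic heat equation \eqref{Eq:SPDEZRP} with the constants $c = \rho/(2\alpha)$ and $\sigma = \sqrt{\overline{\tau}}$ fixed in Subsection \ref{Subsec:IbPFZRP}, taking values in $E = L^1([0,1],dx)$ with invariant measure $m = m_{c,\sigma}$; the approximations are the height functions $u_N$ of \eqref{Eq:ntou}, taking values in the finite sets $E_N \subset E$ with invariant measures $m_N$.

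First I would run through the five assumptions. Assumption \ref{Ass:Charac} is the content of the Proposition of Subsection \ref{Subsec:SPDEZR}, which characterizes $\cR_\lambda f$ through \eqref{Eq:CharactResolZRP} and supplies the measures $\Sigma^{\psi_\varphi}$ of \eqref{Eq:SigmaZRP} indexed by $\cC$. Assumption \ref{Ass:State} holds because each $E_N$ is a finite subset of $E$, so a projection $\pi_N$ exists (uniqueness is not needed), and the weak convergence $m_N \to m$ is Lemma \ref{Lemma:Gaussian}. Assumption \ref{Ass:Tight} in $\bbD([0,\infty), H^{-\beta})$ for $\beta > 1$ is the Proposition of Subsection \ref{Subsec:TightZRP}, so we take $E^- = H^{-\beta}$. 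Assumption \ref{Ass:Equi} is proved in Subsection \ref{Subsec:EquiZRP} via the attractive coupling and the $L^1$ contraction \eqref{eq:contrazione}. Finally, Assumption \ref{Ass:IbPF} holds because $E_N$ is finite, which makes \eqref{Eq:DirFormResolvN} automatic with $\psi_\varphi \Sigma_N^{\psi_\varphi} = \cL_N \psi_\varphi\, m_N$, while the required weak convergence $\Sigma_N^{\psi_\varphi} \to \Sigma^{\psi_\varphi}$ is Proposition \ref{Prop:DiscreteIbPF}.

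The one arithmetic point I would flag explicitly is that the invariant measure of the limiting SPDE genuinely coincides with the weak limit of the $m_N$: Lemma \ref{Lemma:Gaussian} predicts a scaled bridge of variance $\alpha$, whereas $m_{c,\sigma}$ is the law of $\sqrt{\sigma^2/(2c)}\,Y$; these agree since $\sigma^2/(2c) = \overline{\tau}\,\alpha/\rho = \alpha$ by the identity $\overline{\tau} = \rho$. With this compatibility and the five assumptions in hand, Theorem \ref{Th:BlackBox} yields the convergence of $(u_N)_N$ to $u$ started from $m_{c,\sigma}$ in $\bbD([0,\infty), H^{-\beta})$, which is the claim. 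There is no genuine obstacle at this last stage: the real difficulty has already been absorbed into the earlier subsections, chiefly the coupling estimate behind equicontinuity and the local CLT computations behind the convergence of the measures $\Sigma_N^{\psi_\varphi}$.
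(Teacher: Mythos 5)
Your proposal is correct and matches the paper's own argument, which is precisely to assemble the verifications of the five assumptions carried out in Subsections \ref{Subsec:SPDEZR}--\ref{Subsec:TightZRP} and invoke Theorem \ref{Th:BlackBox}. Your explicit check that $\sigma^2/(2c)=\overline{\tau}\alpha/\rho=\alpha$ (via $\overline{\tau}=\rho$), so that $m_{c,\sigma}$ really is the weak limit of the $m_N$ identified in Lemma \ref{Lemma:Gaussian}, is a compatibility point the paper leaves implicit, and it is verified correctly.
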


\section{Pair of reflected interfaces}\label{Sec:Reflected}

The dynamics studied in this section is a special case of a dynamics of $k\ge 2$ reflected interfaces,
introduced previously in \cite{luby2001markov,wilson2004mixing} in the context of dynamics of planar structures and tilings. See also Remark \ref{rem:k>2} below.

We consider the set $\Omega_N$ of all pairs $\ttu=(\ttv,\ttw)$ of lattice paths from $\{0,\ldots,2N\}$ to $\bbZ$ that make $\pm 1$ steps, are pinned at height $0$ at both ends and are ordered, namely
\begin{align*}
	\ttv(k+1)-\ttv(k) \in \{-1,1\}\;,\quad \ttw(k+1)-\ttw(k) \in \{-1,1\}\;,\\
	\ttv(0)=\ttw(0)=\ttv(2N)=\ttw(2N) = 0\;,\\
	\ttv \ge \ttw\;.
\end{align*}
We let $m_N$ be the uniform measure on $\Omega_N$. Let $\Delta$ be the discrete one-dimensional Laplacian, that is
\[
	\Delta f(k) = f(k+1) - 2 f(k) + f(k-1)\;.
\]
Given a configuration $\mathtt u=(\ttv,\ttw) \in \Omega_N$, we say that $k\in\{1,\ldots,2N-1\}$ is a \emph{contact point} if
$$ \ttv(\ell) = \ttw(\ell)\;,\quad \forall \ell \in \{k-1,k,k+1\}\;.$$
We also say that $\ttv$ forms an upward corner at $k$ if $\Delta \ttv = -2$ and a downward corner at $k$ if $\Delta \ttv = 2$. The same definition holds for $\ttw$. On $\Omega_N$, we introduce a partial order by saying that $\ttu=(\ttv,\ttw)\ge \ttu'=(\ttv',\ttw')$
iff $\ttv(k)\ge \ttv'(k)$ and $\ttw(k)\ge \ttw'(k)$ for every $k$. Given a lattice path $\ttv$, if $\ttv$ forms an (upward or downward) corner at $k$, we let $\ttv^k$ denote the configuration where $\ttv(k)$ is flipped to $\ttv(k)+\Delta \ttv(k)$.\\
Note that, if $k$ is a contact point and $\ttv,\ttw$ form an upward (resp.~downward) corner there, then the configuration $(\ttv^k,\ttw)$ does not belong to $\Omega_N$, but $(\ttv,\ttw^k)$ does (resp.~$(\ttv,\ttw^k)$ does not belong to $\Omega_N$, but $(\ttv^k,\ttw)$ does).\\

We consider the following corner flip dynamics on $\Omega_N$:\begin{enumerate}
	\item for every $k\in \{1,\ldots,2N-1\}$ which is not a contact point, if $\ttv$ (resp.~$\ttw$) forms a corner at $k$, then $\ttv(k)$ (resp.~$\ttw(k)$) is flipped at rate $(2N)^2/2$, so that the resulting configuration is  $(\ttv^k,\ttw)$ (resp.~$(\ttv,\ttw^k)$);
	\item for every contact point $k$, if $\ttv$ and $\ttw$ form an upward corner (resp.~downward corner) then $\ttw(k)$ (resp.~$\ttv(k)$) flips at rate $(2N)^2/2$, so that the resulting configuration is  $(\ttv,\ttw^k)$ (resp.~$(\ttv^k,\ttw)$;
	\item for every contact point $k$, if $\ttv$ and $\ttw$ form a corner then they flip simultaneously at rate $(2N)^2/4$, so that the resulting configuration is $(\ttv^k,\ttw^k)$.
\end{enumerate}
We refer to Figure \ref{Fig:Contact} for an illustration. Let us emphasize that all these jumps are driven by independent Poisson clocks. It is straightforward to check that this dynamics is reversible w.r.t.~the uniform measure $m_N$ on $\Omega_N$.

\begin{remark}
  \label{rem:letus}
	Let us point out that the jump rates at contact points are chosen in such a way that the generator satisfies
	$$  \cL_N[\ttv(k)+\ttw(k)] = \frac{(2N)^2}{2} \Delta (\ttv(k) + \ttw(k))\;.$$
	This identity would still hold if we imposed:
	\begin{enumerate}\setcounter{enumi}{1}
		\item for every contact point $k$, if $\ttv$ and $\ttw$ form an upward corner (resp.~downward corner) then $\ttw(k)$ (resp.~$\ttv(k)$) flips at rate $\alpha$,
		\item for every contact point $k$, if $\ttv$ and $\ttw$ form a corner then they flip simultaneously at rate $\beta$
	\end{enumerate}
with $\alpha,\beta \ge 0$ satisfying $\frac{\alpha}{2} + \beta  = (2N)^2/2$. In this more general setting, the invariant measure is no longer uniform nor reversible (except if $\alpha = (2N)^2/2$ and $\beta=(2N)^2/4$).
\end{remark}

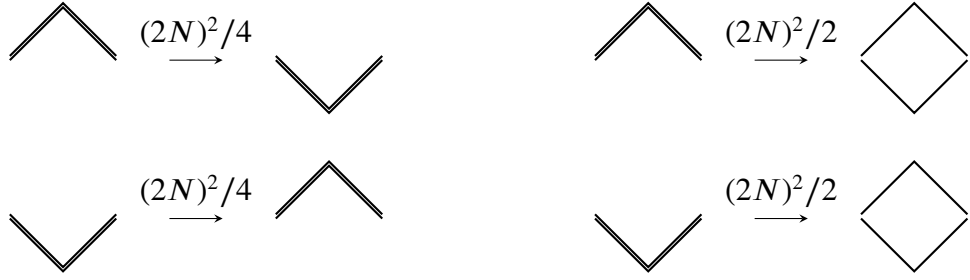
\begin{figure}\centering
	\begin{tikzpicture}[xscale=0.7, yscale=0.7, >=stealth]\label{Fig2}
		
		\draw[-,thick,color=black] (2,3.08) -- (3,4.08) -- (4,3.08);
		\draw[-,thick,color=black] (2,3) -- (3,4) -- (4,3);
		
		\draw[->,color=black] (5,3) -- (5.5,3)node[above]{$(2N)^2/4$} -- (6,3);
		
		\draw[-,thick,color=black] (7,3.08) -- (8,2.08) -- (9,3.08);
		\draw[-,thick,color=black] (7,3) -- (8,2) -- (9,3);
		
		
		\draw[-,thick,color=black] (13,3.08) -- (14,4.08) -- (15,3.08);
		\draw[-,thick,color=black] (13,3) -- (14,4) -- (15,3);
		
		\draw[->,color=black] (16,3) -- (16.5,3)node[above]{$(2N)^2/2$} -- (17,3);
		
		\draw[-,thick,color=black] (18,3.08) -- (19,4.08) -- (20,3.08);
		\draw[-,thick,color=black] (18,3) -- (19,2) -- (20,3);
		
		
		\draw[-,thick,color=black] (2,0.08) -- (3,-0.92) -- (4,0.08);
		\draw[-,thick,color=black] (2,0) -- (3,-1) -- (4,0);
		
		\draw[->,color=black] (5,0) -- (5.5,0)node[above]{$(2N)^2/4$} -- (6,0);
		
		\draw[-,thick,color=black] (7,0.08) -- (8,1.08) -- (9,0.08);
		\draw[-,thick,color=black] (7,0) -- (8,1) -- (9,0);
		
		
		\draw[-,thick,color=black] (13,0.08) -- (14,-0.92) -- (15,0.08);
		\draw[-,thick,color=black] (13,0) -- (14,-1) -- (15,0);
		
		\draw[->,color=black] (16,0) -- (16.5,0)node[above]{$(2N)^2/2$} -- (17,0);
		
		\draw[-,thick,color=black] (18,0.08) -- (19,1.08) -- (20,0.08);
		\draw[-,thick,color=black] (18,0) -- (19,-1) -- (20,0);

	\end{tikzpicture}
	\caption{Dynamics at contact points}\label{Fig:Contact}
\end{figure}

In order to take the continuum limit $N\to\infty$, we need to suitably rescale the paths. Given $\ttu\in\Omega_N$, we define
  $u=(v,w)$ where $v:[0,1]\mapsto \mathbb R$ is the continuous function defined via
  \begin{eqnarray}
    \label{eq:tildeu}
    v(k/(2N)):=\frac1{\sqrt {2N}}\ttv(k)\;,\quad k\in \{0,1,\dots,2N\}
  \end{eqnarray}
  and by linear interpolation in between. For lightness of notation, we do not give an index $N$ to the rescaled functions, in contrast with what we did for the zero-range process. The function $w$ is defined analogously. Then, $u$ is an element of the space $E$ of all pairs $u=(v,w)$ of continuous functions from $[0,1]$ into $\mathbb R$ such that $v\ge w$. On $E$, we consider the distance
$\|u-u'\|_E := \|v-v'\|_{L^1} + \|w-w'\|_{L^1}$. Note that $E$ is a separable metric space, but it is not complete, see the next subsection for a discussion on that point. For fixed $N$, $u$ belongs to the finite set $E_N\subset E$ (obviously defined as the image of $\Omega_N$ via the rescaling). Also, $u^{u_0}(t)$ denotes the rescaled configuration at time $t$, with rescaled initial configuration $u_0$.

\subsection{The SPDE}\label{Sec:SPDEPair}

The limiting dynamics is better described in terms of the sum and difference of the interfaces. Let us introduce the associated stochastic PDEs. Given a space-time white noise $\xi^S$ on $\R_+\times[0,1]$ we consider the solution $S$ of the following stochastic PDE
\begin{align*}
	\begin{cases}
		\partial_t S(t,x) = \frac12 \partial_x^2 S(t,x) + \xi^S\;,\quad x\in [0,1]\;,\quad t\ge 0\;,\\
		S(t,0)=S(t,1)=0\;,
	\end{cases}
\end{align*}
where the initial condition $S(0,\cdot)$ is any continuous function on $[0,1]$. This equation should be understood in the weak sense: for any $C^2$ function $h:[0,1]\to\R$ that vanishes at $0$ and $1$, we have
\begin{equation}\label{Eq:Sweak}
	\langle S(t,\cdot),h\rangle = \langle S(0,\cdot),h\rangle + \frac12\int_0^t \langle S(t,\cdot),h''\rangle +\int_0^t h(x) \xi^S(ds,dx)\;.
\end{equation}
It is well-known that there exists a unique solution $S$ which is continuous in time with values in $C([0,1])$ (endowed with the sup-norm, i.e. the solution is continuous in both variables on $[0,+\infty)\times [0,1]$). Moreover the solution $(S(t,\cdot))_{t\geq 0}$ is a Markov process which admits a unique invariant (reversible) measure, given by the law $\mBB$ of the standard Brownian bridge on $[0,1]$.\\

Given a space-time white noise $\xi^D$, independent of $\xi^S$, we consider the solution $(D,\eta)$ of the following stochastic PDE
\begin{align}	\begin{cases}
		\partial_t D(t,x) = \frac12 \partial_x^2 D(t,x) + \xi^D + \eta\;,\quad x\in [0,1]\;,\quad t\ge 0\;,\\
		D(t,x)\ge 0\;,\quad \;\quad \int_{t\in\R_+} \int_{x\in (0,1)} D(t,x) \eta(dt,dx) = 0\;,\\
		D(t,0)=D(t,1)=0\;,
              \end{cases}
  \label{eq:NP}
\end{align}
where the initial condition $D(0,\cdot)$ is any given non-negative continuous function. Here $\eta$ is a random non-negative Radon measure on $\R_+\times (0,1)$ and the equation is understood similarly as in \eqref{Eq:Sweak} except that one considers the additional term $\int_0^t h(x) \eta(ds,dx)$. The random variable $\eta$ is a reflection measure: it is supported by the space time points $(t,x)$ where the interface $D$ hits $0$, and at these points, it pushes the interface upwards to keep it non-negative.\\
Nualart and Pardoux~\cite{NualartPardoux92} showed that almost surely, there exists a unique pair $(D,\eta)$ satisfying the system above (note that the construction therein is purely deterministic). The process $(D(t,\cdot))_{t\geq 0}$ is Markov with values in $C_+([0,1]):=\{f\in C([0,1]): f\geq 0\}$ and admits a unique invariant (reversible) measure, given by the law $\mBE$ of the normalised Brownian excursion, see~\cite{Zambotti2001,MR3616274}.\\
Since $(S,D)$ is an independent pair of Markov processes, then the pair $(S,D)$ is itself a Markov process on $E:=C([0,1])\times C_+([0,1])$ that admits an invariant and reversible invariant measure $\mBB \otimes \mBE$.

In this situation, $E$ has a natural metric given by the sup-norm, which makes it a Polish space and $(S,D)$ an a.s.~continuous process. However we have allowed ourselves
in the black box result of Section \ref{sec:ageneral} to endow our state space $E$ with a distance $d$ which makes it separable but not necessarily complete. Here we exploit
this freedom and we choose $d$ as the $L^1([0,1],dx)$ distance: for $u=(u^1,u^2), v=(v^1,v^2)\in E=C([0,1])\times C_+([0,1])$
\[
\|u-v\|_E:=\sum_{i=1}^2\|u^i-v^i\|_{L^1([0,1],dx)}.
\] 
The reason for this choice is that the contraction property \eqref{Eq:DecayPair} for the discrete processes can be
proved in this weaker distance.\\

We then define $u=(v,w)$ through
$$ v := \frac{S+D}{\sqrt 2} \;,\quad w = \frac{S-D}{\sqrt 2}\;.$$
It solves
\begin{align}
	\begin{cases}
		\partial_t v = \frac12\partial^2_x v + \xi^v + \frac1{\sqrt 2}\eta\;,\quad x\in [0,1]\;,\quad t\ge 0\;,\\
		\partial_t w = \frac12\partial^2_x w + \xi^w - \frac1{\sqrt 2}\eta\;,\quad x\in [0,1]\;,\quad t\ge 0\;,\\
		v\ge w\;,\quad \eta(dt,dx)\ge 0\;,\quad \int_{(0,1)\times \R_+} (v-w)(t,x) \eta(dt,dx) = 0\;,
              \end{cases}
  \label{eq:theSPDEs}
\end{align}
where $\xi^v := \frac{\xi^S+\xi^D}{\sqrt 2}$ and $\xi^w:=\frac{\xi^S - \xi^D}{\sqrt 2}$ are still independent space-time white noises.

We let $m$ be the pushforward of $\mBB\otimes \mBE$ through the map $(S,D) \mapsto (\frac{S+D}{\sqrt 2} , \frac{ S-D }{\sqrt 2})$. The process $u=(v,w)$ is a Markov process taking values in $E$ with invariant (reversible) measure $m$. Our next proposition provides a characterization of its resolvents and thus establishes Assumption \ref{Ass:Charac} in the present model. This proposition takes advantage of the independence of $S$ and $D$, and therefore mixes the two sets of coordinates ($(S,D)$ and $(v,w)$).\\

For any pair of continuous functions $(\varphi_v,\varphi_w)$ from $[0,1]$ into $\R$, we define for any $u\in E$
$$ \psi_\varphi(u):= e^{i \langle v,\varphi_v\rangle+i\langle w,\varphi_w\rangle}\;.$$
We call $(\cR_\lambda)_{\lambda>0}$ the
resolvent family of the Markov process $(S,D)$.
\begin{proposition}
  \label{prop:Sigma2int}
	Let $f$ be a bounded Lipschitz function from $E$ to $\R$. For any $\lambda > 0$, $\cR_\lambda f$ is the unique bounded Lipschitz-continuous function $F:E\to\R$ such that for any pair of $C^2$ functions $(\varphi_v,\varphi_w)$ from $[0,1]$ into $\R$ with compact supports in $(0,1)$, the following holds:
	\begin{equation}\label{Eq:CharactResolPair}
		\int F(u) \psi_\varphi(u) (\lambda m(du) - \Sigma^{\psi_\varphi}(du)) = \int f(u) \psi_\varphi(u) m(du)\;,
	\end{equation}
	where, writing $S=\frac{v+w}{\sqrt{2}}$ and $D= \frac{v-w}{\sqrt{2}}$
	\begin{equation}\label{Eq:SigmaPair}\begin{split}
		\Sigma^{\psi_\varphi}(du) =& \frac12 \Big( i \langle v,\varphi''_v\rangle  + i \langle w,\varphi''_w\rangle - \| \varphi_v\|_{L^2}^2 -  \| \varphi_w\|_{L^2}^2\Big) m(du)\\
		&+\frac{i}{2} \int_0^1 dr \frac{1}{\sqrt{2\pi r^3(1-r)^3}} \frac{(\varphi_v-\varphi_w)}{\sqrt{2}}(r) \mBE(dD \mid D(r) = 0)\mBB(dS)\;.
	\end{split}
	\end{equation}
      \end{proposition}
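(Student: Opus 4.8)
The plan is to exploit the independence of $S$ and $D$ and to run the whole argument at the level of the Dirichlet form, treating the reflection as a boundary contribution produced by an integration by parts against the excursion measure. Since $(S,D)$ is the product of two independent reversible Markov processes, its Dirichlet form on $L^2(E,m)$ splits into the sum of the forms $\cE^S$ and $\cE^D$ acting on the two variables, and by~\cite[Th I.2.8]{MaRockner} (displayed in~\eqref{Eq:CharactDirichlet}) $\cR_\lambda f$ is the unique $F\in\cD(\cE)$ with $\cE(F,\psi)+\lambda\int F\psi\,m=\int f\psi\,m$ for all $\psi\in\cD(\cE)$. The entire proof will reduce to the \emph{key identity}
$$
\cE(F,\psi_\varphi) = -\int_E F\,\psi_\varphi\,\Sigma^{\psi_\varphi}\,,\qquad F\in\cD(\cE)\,,
$$
with $\Sigma^{\psi_\varphi}$ as in~\eqref{Eq:SigmaPair}: once this holds, \eqref{Eq:CharactResolPair} is exactly the characterization of~\cite{MaRockner} tested against $\psi_\varphi$. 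I first note that $\psi_\varphi$ is a smooth cylinder function lying in $\cD(\cE)$, and that any bounded function that is Lipschitz for the $L^1$ distance on $E$ has gradient bounded in $L^\infty([0,1])$, hence finite energy, so it belongs to $\cD(\cE)$; this lets me treat the unknown solution by the same identity.

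The computational heart is the key identity. Setting $\varphi_S=(\varphi_v+\varphi_w)/\sqrt2$ and $\varphi_D=(\varphi_v-\varphi_w)/\sqrt2$, the orthogonal change of variables $S=(v+w)/\sqrt2$, $D=(v-w)/\sqrt2$ turns $\psi_\varphi$ into $e^{i\langle S,\varphi_S\rangle}e^{i\langle D,\varphi_D\rangle}$ and decouples the bulk part of~\eqref{Eq:SigmaPair} into an $S$ piece $\tfrac i2\langle S,\varphi_S''\rangle-\tfrac12\|\varphi_S\|_{L^2}^2$ and a $D$ piece $\tfrac i2\langle D,\varphi_D''\rangle-\tfrac12\|\varphi_D\|_{L^2}^2$. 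In the $S$ direction the computation is identical to the stochastic heat equation treated in Subsection~\ref{Subsec:SPDEZR} (with $c=1/2$, $\sigma=1$): here $e^{i\langle S,\varphi_S\rangle}\in\cD(\cL^S)$ and a Gaussian integration by parts against $\mBB$ produces the $S$-bulk term. In the $D$ direction the gradient of $\psi_\varphi$ equals $i\varphi_D\psi_\varphi$, so $\cE^D(F,\psi_\varphi)=\tfrac i2\int\psi_\varphi\,\partial_{\varphi_D}F\,m$; writing $\psi_\varphi\,\partial_{\varphi_D}F=\partial_{\varphi_D}(F\psi_\varphi)-i\|\varphi_D\|_{L^2}^2F\psi_\varphi$ and applying, fibrewise in $S$, the integration by parts formula for the normalised excursion measure $\mBE$ yields both the $D$-bulk term and the reflection term. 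This last formula — that for $h\in C_c^\infty(0,1)$
$$
\int \partial_h G\,\mBE = \int G\,\langle D,-h''\rangle\,\mBE - \int_0^1 h(r)\,\frac{1}{\sqrt{2\pi r^3(1-r)^3}}\,\E^{\mBE}\!\big[G\mid D(r)=0\big]\,dr
$$
— is precisely the excursion integration by parts of~\cite{NualartPardoux92,Zambotti2001,MR3616274}. Taking $h=\varphi_D$, $G=F\psi_\varphi$ and keeping the surviving factor $\mBB(dS)$ reproduces the reflection term of~\eqref{Eq:SigmaPair}; the compact support of $\varphi_D$ keeps the kernel $1/\sqrt{2\pi r^3(1-r)^3}$ integrable, so that term is a genuine finite measure.

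With the key identity in hand, existence is immediate: $\cR_\lambda f\in\cD(\cE)$ satisfies the identity of~\cite{MaRockner}, hence~\eqref{Eq:CharactResolPair}. For uniqueness, I take any bounded Lipschitz $F$ satisfying~\eqref{Eq:CharactResolPair}; as observed $F\in\cD(\cE)$, and the key identity converts~\eqref{Eq:CharactResolPair} into $\cE(F,\psi_\varphi)+\lambda\int F\psi_\varphi\,m=\int f\psi_\varphi\,m$ for every $\psi_\varphi$. It then remains to upgrade this from the class $\{\psi_\varphi\}$ to all of $\cD(\cE)$, which holds as soon as the real and imaginary parts of $\{\psi_\varphi:\varphi_v,\varphi_w\in C_c^\infty(0,1)\}$ are dense in $\cD(\cE)$ for the form norm $(\cE+\lambda\int\,\cdot\,m)^{1/2}$, i.e.\ form a core. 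For the Gaussian factor this is classical, and for the excursion factor it is part of the construction of the Dirichlet form associated with $\mBE$ in~\cite{Zambotti2001,MR3616274}; the product structure gives a core for $\cE$. Testing the resulting identity against $\psi=F-\cR_\lambda f\in\cD(\cE)$ then forces $F=\cR_\lambda f$.

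The hard part will be the $D$-direction integration by parts: making rigorous the appearance of the conditional law $\mBE(\,\cdot\mid D(r)=0)$ and of the explicit kernel $1/\sqrt{2\pi r^3(1-r)^3}$, and checking that the boundary term carries the sign forcing $\eta\ge0$ in~\eqref{eq:theSPDEs}–\eqref{eq:NP}. This is exactly the fine analysis of the Brownian excursion measure of~\cite{Zambotti2001,MR3616274}, and the very same analysis underlies the core property needed for uniqueness; by contrast the $S$ direction is entirely standard, being the stochastic heat equation already handled in Subsection~\ref{Subsec:SPDEZR}.
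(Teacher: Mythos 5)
Your proposal follows essentially the same route as the paper's proof: decompose the Dirichlet form of $(S,D)$ as the tensor product of $\cE^S$ and $\cE^D$, invoke the Ma--R\"ockner characterization \eqref{eq:page16} of $\cR_\lambda f$, convert \eqref{Eq:CharactResolPair} into that characterization tested on $\psi_\varphi$ via the Gaussian and excursion-measure integration by parts formulae (the latter being the Zambotti formula the paper cites), and conclude uniqueness from the facts that bounded Lipschitz functions lie in $\cD(\cE)$ and that exponentials form a core. The only difference is that you spell out the excursion integration by parts and the change of variables explicitly, which the paper leaves to the references.
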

      In this case, we let $\mathcal C$ denote the set of all functions $\psi_\varphi(\cdot)$ with $\varphi$ as in the statement of the proposition. Let us point out that the probability measure $\mBE(dD \mid D(r) = 0)$ can be defined by passing to the limit on a regular conditioning of the form $\{D(r) < \varepsilon\}$. This probability measure then concides with the law of the concatenation of two Brownian excursions rescaled on $(0,r)$ and $(r,1)$ respectively: we refer to Step 3 in the proof of Proposition \ref{Prop:DiscreteIbPFPair} for more details in this direction.
      
\begin{proof}
By \cite[chapter V]{BH91}, the Markov process $(S,D)$ is associated with the tensor product $\cE$ of the Dirichlet forms $\cE^S,\cE^D$ of $S$ and $D$ respectively. Moreover $\cR_\lambda f$ is the unique function in $\cD(\cE)$ such that \eqref{eq:page16} holds for all $g\in\cD(\cE)$. Then if we choose $g=\psi_\varphi$, we obtain by integration by parts that $\cR_\lambda f$ does satisfy \eqref{Eq:CharactResolPair}, using the explicit form of the integration by parts for $\cE^D$ given in \cite{Zambotti2002}. On the other hand,
if $F:E\to\R$ is bounded Lipschitz-continuous then $F\in\cD(\cE)$ and if $F$ also satisfies \eqref{Eq:CharactResolPair} then by integration
by parts $F$ satisfies \eqref{eq:page16} for $g=\psi_\varphi$. Since exponential functions are a core in the domains $\cD(\cE^S)$ and
$\cD(\cE^D)$, then $F$ 
satisfies \eqref{eq:page16} for all $g\in\cD(\cE)$.
%
%
\end{proof}
      
\begin{lemma}\label{lemma:CVInvMeas}
	As $N\to\infty$, $m_N$ converges weakly to the measure $m$.
\end{lemma}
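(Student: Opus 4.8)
The plan is to describe $m_N$ via the increments of the two paths, pass to sum/difference coordinates, and read off the Brownian bridge and Brownian excursion from classical invariance principles. First, I would note that $m_N$, being uniform on the set of \emph{ordered} pairs of bridges, is exactly the law of two independent uniform $\pm1$ bridges $(\ttv,\ttw)$ on $\{0,\dots,2N\}$ conditioned on $\{\ttv(k)\ge\ttw(k)\ \forall k\}$ (uniform law on a subset is the unconditioned uniform law conditioned on that subset). I then change variables to the integer walks $\hat A:=(\ttv+\ttw)/2$ and $\hat B:=(\ttv-\ttw)/2\ge 0$, both integer valued since $\ttv(k)\equiv\ttw(k)\equiv k \pmod 2$. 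The key structural observation is an \emph{interlacing}: at each of the $2N$ steps exactly one of $\hat A,\hat B$ moves by $\pm1$ and the other stays put --- a step where $\ttv,\ttw$ move the same way changes only $\hat A$, a step where they move oppositely changes only $\hat B$. In these coordinates the ordering constraint reads simply $\hat B\ge 0$, while $\hat A$ is unconstrained apart from being a bridge.

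This interlacing yields an exact combinatorial factorisation of $m_N$. Writing $2p$ for the number of ``difference-steps'' (those moving $\hat B$) and $2q=2N-2p$ for the number of ``sum-steps'', a bijection gives $|\Omega_N|=\sum_{p=0}^N\binom{2N}{2p}\,C_p\,\binom{2(N-p)}{N-p}$, where $C_p$ is the $p$-th Catalan number counting the non-negative $\pm1$ bridges (Dyck paths) of length $2p$ and $\binom{2q}{q}$ counts the free $\pm1$ bridges of length $2q$. Hence, under $m_N$ and conditionally on the (uniformly chosen) set of difference-step positions, the walk $\hat B$ read along its own clock is a uniform Dyck path of semilength $p$, the walk $\hat A$ read along its own clock is a uniform free bridge of length $2q$, and the two are independent. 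I would then check two soft facts: that $p/N\to 1/2$ in probability (the factor $\binom{2N}{2p}$ peaks at $2p=N$ with Gaussian width $O(\sqrt N)$, the Catalan/binomial prefactors only perturbing this by $O(\sqrt N)$), and that the associated random clocks, e.g.\ $x\mapsto \#\{\text{difference-steps}\le 2Nx\}/(2p)$, converge uniformly to the identity, their $O(N^{-1/2})$ fluctuations being negligible at the diffusive scale.

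It then remains to invoke two classical invariance principles: Donsker's theorem for bridges, giving $\hat A^{(q)}(2q\,\cdot)/\sqrt{2q}\to$ a standard Brownian bridge, and Kaigh's invariance principle for walks conditioned to stay non-negative, giving $\hat B^{(p)}(2p\,\cdot)/\sqrt{2p}\to$ the normalised Brownian excursion; at fixed $p$ these two limits are independent. Composing with the clocks and using that $\sqrt{2p}/\sqrt N\to 1$ and $\sqrt{2q}/\sqrt N\to 1$ (consequences of $p/N,q/N\to 1/2$), together with the uniform Hölder continuity of the limits, one obtains that under $m_N$ the rescaled processes $S:=\hat A(2N\,\cdot)/\sqrt N$ and $D:=\hat B(2N\,\cdot)/\sqrt N$ converge jointly in $C([0,1])$ to an \emph{independent} pair consisting of a Brownian bridge (law $\mBB$) and a normalised Brownian excursion (law $\mBE$); the constants are arranged so that the excursion appears with unit normalisation. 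Since $v=(S+D)/\sqrt2$ and $w=(S-D)/\sqrt2$, the continuous mapping theorem finishes the proof: $(v,w)$ converges in law (in sup-norm, hence a fortiori in the $L^1$-metric of $E$) to the pushforward of $\mBB\otimes\mBE$ under $(S,D)\mapsto((S+D)/\sqrt2,(S-D)/\sqrt2)$, which is exactly $m$.

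The main obstacle is the \emph{degenerate} nature of the conditioning that turns the difference bridge into an excursion: since $\{\hat B\ge 0\}$, i.e.\ $\{\ttv\ge\ttw\}$, has vanishing probability under the free measure, the excursion limit cannot be read off from naive weak convergence of $(S,D)$ under the unconditioned law and genuinely requires the excursion invariance principle. A closely related subtlety is the asymptotic independence of $S$ and $D$: the two random clocks are exactly anti-correlated at the discrete level, so one must verify that these $O(\sqrt N)$ clock fluctuations, together with the $O(\sqrt N)$ fluctuations of $p$ about $N/2$, wash out under diffusive rescaling, which is what decouples the excursion and the bridge in the limit. The tightness needed to run the composition-of-clocks argument in $C([0,1])$ follows from uniform Hölder moment bounds obtained by comparison with the free (unconditioned) measure.
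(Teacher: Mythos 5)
Your proof is correct, but it takes a genuinely different route from the paper's. The paper disposes of this lemma in a few lines by citing an existing result (Gillet's Theorem~2.7) giving weak convergence of $m_N$ to the law of a pair $(v,w)$ solving a coupled system of SDEs with a repulsion term $dt/(v-w)$, and then performing the linear change of variables $S=(v+w)/\sqrt2$, $D=(v-w)/\sqrt2$ on those SDEs to recognise the Brownian-bridge SDE for $S$ and the Bessel(3)-bridge SDE for $D$, with independent driving noises. Your argument is instead self-contained modulo two classical invariance principles: the interlacing observation that exactly one of $\hat A=(\ttv+\ttw)/2$, $\hat B=(\ttv-\ttw)/2$ moves at each step gives an \emph{exact} discrete factorisation of $m_N$ (conditionally on the clock, a uniform free bridge independent of a uniform Dyck path), from which the decoupling of $\mBB$ and $\mBE$ in the limit is structurally transparent rather than read off from the limiting SDEs; the counting identity $|\Omega_N|=\sum_p\binom{2N}{2p}C_p\binom{2(N-p)}{N-p}$ checks out against the Karlin--McGregor count used elsewhere in the paper. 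The price is the bookkeeping you correctly identify: concentration of $p$ around $N/2$ and of the clocks around the identity, and the use of Kaigh's theorem for the excursion limit (note that Kaigh already gives functional convergence in $C([0,1])$, so the separate tightness step via comparison with the free measure is not really needed there; and where you do compare with the unconditioned law, the conditioning event has only polynomially small probability, so high moments absorb the cost, exactly as in the paper's treatment of the zero-range process). Both proofs are valid; the paper's is shorter by outsourcing the hard work to a citation, yours explains where the independence of bridge and excursion actually comes from.
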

\begin{proof}
	It is known, see for instance~\cite[Theorem 2.7]{Gillet03}, that $m_N$ converges weakly to a probability measure on $C([0,1],\R^2)$ (and a fortiori on $E$) under which $(v,w)$ solves
	\begin{align*}
		dv(t) &= - \frac{v(t)}{1-t} dt + \frac{dt}{v(t)-w(t)} + dB^v(t)\;,\quad v(0) = 0\;,\\
		dw(t) &= - \frac{w(t)}{1-t} dt + \frac{dt}{w(t)-v(t)} + dB^w(t)\;,\quad w(0)=0\;,
	\end{align*}
	where $B^v$ and $B^w$ are independent standard Brownian motions. It is then straightforward to check that $S:=(v+w)/\sqrt{2}$ and $D:=(v-w)/\sqrt{2}$ solve
		\begin{align*}
		dS(t) &= - \frac{S(t)}{1-t} dt + dB^S(t)\;,\quad S(0) = 0\;,\\
		dD(t) &= - \frac{D(t)}{1-t} dt + \frac{dt}{D(t)} + dB^D(t)\;,\quad D(0)=0\;,
	\end{align*}
	where $B^S$ and $B^D$ are independent standard Brownian motions. As a consequence $S$ is a Brownian bridge and $D$ an independent normalised Brownian excursion.
\end{proof}
This lemma ensures that Assumption \ref{Ass:State} is satisfied.

\subsection{Equicontinuity}

We follow the same strategy as in Subsection \ref{Subsec:EquiZRP} in order to establish Assumption \ref{Ass:Equi}. It suffices to show that for any $u_0,u'_0 \in E_N$, there exists a coupling under which
\begin{align}
	\E\big[\|u^{u_0}(t) - u^{u_0'}(t) \|_{E}\big] &\le \|u_0-u_0'\|_E\;.\label{Eq:DecayPair}
\end{align}
From now on, we write $u$ and $u'$ for $u^{u_0}$ and $u^{u_0'}$.

There exists a global monotone coupling of the processes starting from
all initial configurations $\mathtt u_0\in\Omega_N$ such that if
$\mathtt u_0 \ge \mathtt u_0'$ then almost surely $\ttu^{u_0}(t) \ge \ttu^{u_0'}(t)$ for
all $t\ge 0$. Indeed, the dynamics we are considering is a special case of a dynamics of $k$ reflected interfaces (see Remark \ref{rem:k>2} below), for which the existence of a global monotone coupling is known \cite{wilson2004mixing}.\\
With this coupling at hand, let us consider first the case
$\mathtt u_0 \ge \mathtt u_0'$. Then at all times $t\ge 0$ 
$$(2 N)^{3/2}\| u(t) - u'(t)\|_E = \sum_{k=1}^{2N-1} \ttv(t,k)-\ttv'(t,k) + \ttw(t,k)-\ttw'(t,k)\;.$$
Consequently,
$$ (2 N)^{3/2}\partial_t \E\big[\| u(t) - u'(t)\|_E\big] = \sum_{k=1}^{2N-1} \E\big[ \cL_N(\ttv(t,k)-\ttv'(t,k) + \ttw(t,k)-\ttw'(t,k)) \big]\;.$$
Thanks to Remark \ref{rem:letus},
$$\E\big[\cL_N( \ttv(t,k)+ \ttw(t,k)) \big] =\frac{(2N)^2}{2}\E\big[ \Delta \ttv(t,k) + \Delta \ttw(t,k) \big]\;,$$
and similarly for $\ttv' + \ttw'$. Since the interfaces are pinned at height $0$ at $k=0$ and $k=2N$, an integration by parts gives
\begin{align*}
&(2 N)^{3/2} \partial_t \E\big[\| u_N(t) - u_N'(t)\|_E\big]\\
&= -\frac{(2N)^2}{2} \sum_{k\in \{1,2N-1\}} \E\big[\ttv(t,k)-\ttv'(t,k)+\ttw(t,k)-\ttw'(t,k)\big]\;,
\end{align*}
which is non-positive, and therefore \eqref{Eq:DecayPair} is proved.\\
To treat the general case where $\tu_0$ and $\tu'_0$ are not ordered, we introduce
$$ \tv^{\max}(k) = \max(\tv_0(k),\tv_0'(k))\;,\quad \tv^{\min}(k) = \min(\tv_0(k),\tv_0'(k))\;,$$
together with
$$ \Tw^{\max}(k) = \max(\Tw_0(k),\Tw_0'(k))\;,\quad \Tw^{\min}(k) = \min(\Tw_0(k),\Tw_0'(k))\;.$$
Our coupling implies that almost surely at all times $t\ge 0$
$$ \ttv^{\max}(k,t) \ge \ttv(k,t),\ttv'(k,t) \ge \ttv^{\min}(k,t)\;,$$
and
$$ \ttw^{\max}(k,t) \ge \ttw(k,t),\ttw'(k,t) \ge \ttw^{\min}(k,t)\;,$$
with the notation that $(\ttv^{\max}(t),\ttw^{\max}(t))$ is the configuration at time $t$ started from $(\ttv^{\max},\ttw^{\max})$, and analogously for the evolution starting from the minimal configuration.
Consequently,
$$\E[ \|u(t) - u'(t) \|_{E} ]\le \E[\|u_N^{\max}(t) - u_N^{\min}(t) \|_{E}] \le  \|u^{\max} - u^{\min} \|_{E}\;.$$
To conclude, one observes that
\begin{align*}
	\|u^{\max} - u^{\min} \|_{E} &= \|v^{\max} - v^{\min} \|_{L^1} + \|w^{\max} - w^{\min} \|_{L^1}\\
&= \|v_0 - v'_0\|_{L^1} +  \|w_0 - w'_0\|_{L^1}\\
&= \|u_0 - u_0'\|_E\;.
\end{align*}

\subsection{Discrete characterization and convergence}

In this subsection, we check that Assumption \ref{Ass:IbPF} is satisfied. Given two functions $\varphi_v,\varphi_w$ from $[0,1]$ into $\R$, recall that
$$ \psi_\varphi(u)= e^{i \langle v,\varphi_v\rangle+i\langle w,\varphi_w\rangle}\;.$$
We define the finite complex measure $ \Sigma^{\psi_\varphi}_N$ on $E$ by setting
\begin{equation}
	\Sigma^{\psi_\varphi}_N(du) =	\mathcal{L}_N \psi_\varphi(u)m_N(du)\;.
\end{equation}

\begin{proposition}\label{Prop:DiscreteIbPFPair}
	Let $\varphi_v,\varphi_w$ be two $C^2$ functions from $[0,1]$ into $\R$ with compact supports in $(0,1)$. As $N\to\infty$, $ \Sigma^{\psi_\varphi}_N$ converges weakly to the finite complex measure $\Sigma^{\psi_\varphi}$ which was defined in \eqref{Eq:SigmaPair}.
\end{proposition}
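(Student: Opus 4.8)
The plan is to compute $\cL_N\psi_\varphi(u)$ explicitly by summing over all admissible corner flips, expand each jump factor as $e^{i\Delta}-1=i\Delta-\tfrac12\Delta^2+O(\Delta^3)$ (with $\Delta$ the increment of $i\langle v,\varphi_v\rangle+i\langle w,\varphi_w\rangle$ produced by the flip), multiply by a bounded continuous $F$, integrate against $m_N$, and pass to the limit. A single corner flip of $\ttv$ at $k$ changes $\ttv(k)$ by $\Delta\ttv(k)\in\{-2,+2\}$, hence shifts $\langle v,\varphi_v\rangle$ by $\tfrac{\Delta\ttv(k)}{\sqrt{2N}}\int\Lambda_k\varphi_v=\tfrac{\Delta\ttv(k)}{\sqrt{2N}}\cdot\tfrac1{2N}\varphi_v(k/(2N))+O(N^{-7/2})$, where $\Lambda_k$ is the hat function at $k/(2N)$; likewise for $\ttw$. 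I would then write $\cL_N\psi_\varphi=\psi_\varphi\cdot(\text{I}+\text{II}+\text{III}+\text{h.o.t.})$, where I collects the first-order contributions computed \emph{as if every corner flipped freely}, II the genuinely quadratic contributions, and III the correction, localised at contact points, recording the difference between the true rates and the free ones.

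For the free first-order part I, summing $\tfrac{(2N)^2}{2}\,i\,\delta\langle v,\varphi_v\rangle$ over all corners of $\ttv$ gives $i\sqrt{N/2}\sum_k\Delta\ttv(k)\,\varphi_v(k/(2N))$, and a discrete summation by parts (boundary terms vanish since $\varphi_v$ is compactly supported in $(0,1)$) converts this, up to lower order, into $\tfrac i2\langle v,\varphi_v''\rangle$; similarly for $\ttw$. Integrated against $F\psi_\varphi\,dm_N$ and using the weak convergence $m_N\to m$ (\cref{lemma:CVInvMeas}) together with uniform moment bounds on $\sup|v|,\sup|w|$ under $m_N$ to secure uniform integrability, this yields $\tfrac i2(\langle v,\varphi_v''\rangle+\langle w,\varphi_w''\rangle)$ integrated against $m$. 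The quadratic part II equals $-\tfrac1{2N}\sum_{k:\,\ttv\text{ corner}}\varphi_v^2(k/(2N))$ (plus the $\ttw$ analogue); here I would replace the local indicator $\mathbf1_{\{k\text{ is a corner of }\ttv\}}$ by its asymptotic conditional mean $\tfrac12$, a soft replacement justified by a local limit theorem showing that away from the endpoints the path increments are asymptotically i.i.d.\ $\pm1$ under $m_N$; this produces $-\tfrac12(\|\varphi_v\|_{L^2}^2+\|\varphi_w\|_{L^2}^2)$. Together, I and II reproduce exactly the $m(du)$-part (first line) of \eqref{Eq:SigmaPair}, and the $O(\Delta^3)$ terms are negligible.

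The whole reflection term must therefore come from III. A direct computation shows that at a contact point where $\ttv,\ttw$ form a corner, the difference (true rates) $-$ (free rates), to first order, equals $i\sqrt{N/2}\,(\varphi_v-\varphi_w)(k/(2N))$ — and, strikingly, this value is the same whether the corner is upward or downward. This antisymmetry (the contribution is proportional to $\varphi_v-\varphi_w$, with no $\varphi_v+\varphi_w$ component, consistently with \cref{rem:letus} stating that $\ttv+\ttw$ evolves freely) is precisely the discrete incarnation of the fact that the ``push $\ttv$ up'' and ``push $\ttw$ down'' mechanisms converge to the \emph{same} measure $\eta$. Hence $\text{III}=i\sqrt{N/2}\sum_k(\varphi_v-\varphi_w)(k/(2N))\,\mathbf1_{\{k\text{ is a contact corner}\}}$, and contributions from $k$ near $0$ or $2N$, where contact corners abound, are switched off by the compact support of $\varphi_v-\varphi_w$.

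The main obstacle is the sharp local limit theorem needed to pass this last sum to the limit. A contact corner at $k$ forces $D=\tfrac{\ttv-\ttw}{\sqrt2}$ to vanish on $\{k-1,k,k+1\}$, which in the bulk is rare, since rescaled $D$ converges to a Brownian excursion whose marginal density vanishes quadratically at $0$. Writing $k=\lfloor 2Nr\rfloor$, the quadratic suppression ($\sim(1/\sqrt N)^2$) times the local-CLT normalisation ($\sim 1/\sqrt N$) should give $\P_{m_N}(k\text{ is a contact corner})\sim\tfrac1{2N\sqrt N}\cdot\tfrac1{2\sqrt{2\pi r^3(1-r)^3}}$, and, crucially, the law of $(S,D)$ conditioned on this event should converge to $\mBB(dS)\,\mBE(dD\mid D(r)=0)$, the conditioning decoupling the discrete difference path into two independent excursions on $[0,k]$ and $[k,2N]$, exactly as in the description following the statement. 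Granting a version of this estimate uniform for $r$ in the support of $\varphi_v-\varphi_w$ (so the Riemann sum $\sqrt{N/2}\sum_k(\cdots)$ converges and is dominated), III converges to $\tfrac i2\int_0^1\tfrac1{\sqrt{2\pi r^3(1-r)^3}}\tfrac{(\varphi_v-\varphi_w)}{\sqrt2}(r)\,\E[F\psi_\varphi\mid D(r)=0]\,dr$, which is exactly the integral of $F\psi_\varphi$ against the second line of \eqref{Eq:SigmaPair}. Establishing this local CLT — the precise constant \emph{and} the conditional law, uniformly in the bulk — is the technical heart of the argument and is precisely the point left open in \cite{EthLab}; I would obtain it through combinatorial and local-CLT estimates for pairs of ordered paths, in the spirit of \cite{Zambotti2001}.
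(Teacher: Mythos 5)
Your strategy is essentially the one the paper follows: the same decomposition of the generator into a ``free'' part plus a contact-point correction, the same second-order Taylor expansion, and your two key computations are exactly right --- the correction at a contact corner is $i\sqrt{N/2}\,(\Phi_v-\Phi_w)(k/(2N))$ irrespective of the corner's orientation, and the probability of a contact corner at $k=\lfloor 2Nr\rfloor$ is asymptotic to $\tfrac{1}{4N^{3/2}}(2\pi r^3(1-r)^3)^{-1/2}$, which reproduces the constant in \eqref{Eq:SigmaPair}. Two points, however, are asserted rather than proved. First, for the quadratic term, replacing $\mathbf{1}_{\{\Delta\ttv(k)\ne0\}}$ by $\tfrac12$ cannot be justified merely by convergence of the one-site marginals under $m_N$: since the limit must hold after integration against an arbitrary bounded continuous $F$, you need the centered sum $\tfrac1N\sum_k a_k\big(\tfrac12-\mathbf{1}_{\{\Delta\ttv(k)\ne0\}}\big)$ to vanish in $L^1(m_N)$, not just in expectation. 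The paper gets this by writing $m_N=\mathbb P^{\otimes 2}(\cdot\,|\,D_N)$ with $\mathbb P^{\otimes 2}(D_N)\gtrsim N^{-2}$, applying Jensen to pass to a $p$-th moment under the unconditioned product law of two free walks (where the summands are genuinely i.i.d.\ and centered), and choosing $p>4$ to beat the $N^{2/p}$ cost of unconditioning. Second, the sharp local CLT for contact corners together with the convergence of the conditional law of the configuration given a contact corner --- which you correctly single out as the technical heart --- is precisely the content of Lemmas \ref{lemma:EstimatesmN} and \ref{lemma:CVJ}: the former is an explicit Karlin--McGregor computation (which also supplies the truncation in the height $j$ of the contact point, a uniformity issue your sketch does not address), and the latter decouples the conditioned path into two independent ordered bridges via an explicit concatenation map and then invokes Lemma \ref{Lemma:EquiContCV}. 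So the route is the right one and the computations you do carry out are correct, but as written the proposal leaves these quantitative inputs as hypotheses.
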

\begin{proof}
We write $v^k,w^k$, $k=1,\dots,2N-1$, for the configurations $\tv^k,\Tw^k$ after rescaling. Since $\tv,\Tw$ are obtained by affine interpolation in between their values at lattice sites, one can check that
$$ \langle v,\varphi_v\rangle = \frac1{(2N)^{3/2}} \sum_{k=1}^{2N-1} \tv(k) \Phi_v(k/(2N))\;,$$
with
$$ \Phi_v(k/(2N)) = 2N \int_{(k-1)/(2N)}^{k/(2N)} \varphi_v(x) (2Nx-(k-1)) dx + 2N \int_{k/(2N)}^{(k+1)/(2N)} \varphi_v(x) (k+1 - 2Nx) dx\;,$$
and similarly for $\langle w,\varphi_w\rangle$ with a function $\Phi_w$ defined analogously.

A direct computation shows that the generator $\mathcal{L}_N$ can be written as a sum of $ \mathcal{L}_N^A$ and $ \mathcal{L}_N^R$ where the former deals with non-reflected terms in the generator, that is
	\begin{align*}		
		\mathcal{L}_N^A \psi_\varphi(u) &= \sum_{k = 1}^{2N-1} {(2N)^2} \psi_\varphi(u)A_k(u)\;,
	\end{align*}
	with
	$$ A_k(u) = \frac12\Big(\frac{\psi_\varphi(v^k,w)}{\psi_\varphi(v,w)} - 1 \Big) \mathbf{1}_{\Delta \tv(k) \ne 0} + \frac12 \Big(\frac{\psi_\varphi(v,w^k)}{\psi_\varphi(v,w)} - 1 \Big) \mathbf{1}_{\Delta \Tw(k) \ne 0}\;,$$
	while $ \mathcal{L}_N^R$ is the part of the generator coming from the reflection, that is
	\begin{align*}		
		\mathcal{L}_N^R \psi_\varphi(u) &= \sum_{k = 1}^{2N-1} {(2N)^2}\psi_\varphi(u)R_k(u)\;,
	\end{align*}
	with
	\begin{align*}
		R_k(u) = &-\frac12\Big(\frac{\psi_\varphi(v^k,w)}{\psi_\varphi(v,w)} - 1 \Big) \mathbf{1}_{\Wedge(k)}-\frac12\Big(\frac{\psi_\varphi(v,w^k)}{\psi_\varphi(v,w)} - 1 \Big) \mathbf{1}_{\Vee(k)} \\
		& + \frac14\Big(\frac{\psi_\varphi(v^k,w^k)}{\psi_\varphi(v,w)} - 1 \Big)( \mathbf{1}_{\Wedge(k)}+\mathbf{1}_{\Vee(k)})\;.
	\end{align*}
    Here $\Wedge(k)$ (resp. $\Vee(k)$) is the event that $k$ is  a contact point and the interfaces $\tv, \Tw$ have an upward (resp. downward) corner there. We control these terms separately. From now on $O(N^{-a})$ will denote a quantity that is bounded by $CN^{-a}$ where $C>0$ is independent of $N$ and $k$.\\
    
    \noindent\emph{Step 1: non-reflected terms.} One can check that
              \begin{align*}
                A_k(u)                &=\frac i{2(2N)^{3/2}}[\Phi_v(k/(2N))\Delta \tv(k)+\Phi_w(k/(2N))\Delta \Tw(k)]\\&-\frac{1}{(2N)^3}\Big[\Big(\Phi_v\big(\frac k{2N}\big)\Big)^2 \mathbf{1}_{\Delta \tv(k) \ne 0}
+\Big(\Phi_w\big(\frac k{2N}\big)\Big)^2 \mathbf{1}_{\Delta \Tw(k) \ne 0}\Big] +O(N^{-9/2})\;.
              \end{align*}
    Via a summation by parts and recalling that $\varphi_v,\varphi_w$ are $C^2$ functions that have compact support in $(0,1)$, 
    we thus deduce that, up to negligible terms, $\mathcal{L}_N^A \psi_\varphi(u)$ coincides with
    \begin{equation}
    	\frac12 \psi_\varphi(u)\Big( i \langle v,\varphi_v''\rangle+ i \langle w,\varphi_w''\rangle -\|\varphi_v\|^2 -\|\varphi_w\|^2\Big) + \cS_N(u)\;,
    \end{equation}
    where
    \begin{multline}
    	\cS_N(u) := \psi_\varphi(u)\frac1{N}\sum_{k=1}^{2N-1}\Big(\Phi_v\Big(\frac k{2N} \Big)^2\Big(\frac12-\mathbf{1}_{\Delta\tv(k)\ne0}\Big)+
    	\Phi_w\Big(\frac k{2N} \Big)^2\Big(\frac12-\mathbf{1}_{\Delta\Tw(k)\ne0}\Big)\Big)\;.
    \end{multline}
    Let us show that $\cS_N(u) m_N(du)$ converges weakly to $0$ as $N\to\infty$. Set
    \[
    a_k:=\Phi_v^2(k/(2N))\;.
    \]
    By symmetry, and since $\psi_\varphi$ is a bounded function, it is enough to prove that
    \begin{equation}\label{Eq:ToproveSN}
    	\frac1N m_N\Big[|\sum_{k=1}^{2N-1}a_k \Big(\frac12-\mathbf{1}_{\Delta\tv(k)\ne0}\Big)|\Big]
    \end{equation}
    tends to zero as $N\to\infty$. Let $\mathbb P^{\otimes 2}$ denote the law of two iid simple random walks $\tv,\Tw$ started at zero, and set
    $$D_N := \{(\tv,\Tw):  \forall k\;,\tv(k) \ge \Tw(k) \quad \mbox{ and }\tv(2N)=\Tw(2N)\}\;.$$
    We then have $m_N(\cdot)=\mathbb P^{\otimes 2}(\cdot|D_N)$.\\
    Using Jensen's inequality for $p>1$ and the definition of $m_N$, \eqref{Eq:ToproveSN} is smaller than
    \begin{multline}
    	\frac1{\sqrt N} \left(m_N\Big[\Big|\frac1{\sqrt N}\sum_{k=1}^{2N-1}a_k \Big(\frac12-\mathbf{1}_{\Delta\tv(k)\ne0}\Big)\Big|^p\Big]
    	\right)^{1/p}\\\le \frac1{\sqrt N(\mathbb P^{\otimes 2}(D_N))^{1/p}}  \left(\mathbb E^{\otimes 2}\Big[\Big|\frac1{\sqrt N}\sum_{k=1}^{2N-1}a_k \Big(\frac12-\mathbf{1}_{\Delta\tv(k)\ne0}\Big)\Big|^p\Big]
    	\right)^{1/p}\;.
    \end{multline}
    Since $\mathbb P^{\otimes 2}(D_N)$ is of order $1/N^2$, the constants $a_k$ are bounded uniformly in $N$ and the random variables $\frac12-\mathbf{1}_{\Delta\tv(k)\ne0}$ are i.i.d., centered and of finite variance under $\mathbb P^{\otimes 2}$, this expression is upper bounded by a $p$-dependent constant times 
    $N^{-1/2+2/p} $, that tends to zero as soon as $p>4$.\\
    
    \noindent\emph{Step 2: simplifying the reflection terms.} It is straightforward to check that
    \begin{equation}
              R_k(u)=\frac i{{ 2}(2N)^{3/2}}(\mathbf{1}_{\Vee(k)} +\mathbf{1}_{\Wedge(k)}) \Big(\Phi_v\Big(\frac k{2N} \Big)-\Phi_w\Big(
             \frac k{2N}\Big)+O(N^{-3/2})\Big).
   	\end{equation}
   Since $\varphi_v,\varphi_w$ have compact supports in $(0,1)$, we can restrict ourselves to those $k\in [\delta 2N, (1-\delta) 2N]$ for some $\delta > 0$ small enough but fixed w.r.t.~$N$.
   By (1) of Lemma \ref{lemma:EstimatesmN} below, the quantity $O(N^{-3/2})$ has a negligible contribution in $\mathcal{L}_N^R \psi_\varphi(u)m_N(du)$. Furthermore, by (2) of Lemma \ref{lemma:EstimatesmN} it suffices to show that, as $N\to\infty$ and then $a\uparrow \infty$ the measure
   \begin{equation}\label{Eq:RN}\begin{split}
		(2N)^2\psi_\varphi(u)&\sum_{k\in [\delta 2N, (1 - \delta) 2N]} \sum_{j\in [-a\sqrt{2N},a\sqrt{2N}]} \frac i{{ 2}(2N)^{3/2}}(\mathbf{1}_{\Vee(k,j)} +\mathbf{1}_{\Wedge(k,j)})\\
		&\times\Big(\Phi_v\Big(\frac k{2N} \Big)-\Phi_w\Big(\frac k{2N}\Big)\Big) m_N(du)\;,
	\end{split}   
	\end{equation}
   converges weakly to
   \begin{equation}
   		\frac{i}{2} \psi_\varphi(u)\int_0^1 dr \frac{1}{\sqrt{2\pi r^3(1-r)^3}} \frac{(\varphi_v-\varphi_w)}{\sqrt{2}}(r) d\mBE(D \mid D(r) = 0)d\mBB(S)\;,
   \end{equation}
   where $S=(v+w)/\sqrt 2$ and $D=(v-w)/\sqrt 2$. Here, $\Vee(k,j)$ (resp. $\Wedge(k,j)$) denotes the event that $k$ is a contact point, the interfaces $\ttv,\ttw$ have a downward (resp. upward) corner there and $\ttv(k\pm1)=\ttw(k\pm1)=j$.
   \\
   
   \noindent\emph{Step 3: concatenation maps and decomposition of the invariant measure - the continuous case.} For any functions $u_1,u_2$ from $[0,1]$ into $\R^2$ and for any given $r\in (0,1)$, we define the concatenation map
   \[
   			T_{r} (u_1,u_2)(x) = \sqrt{r}\, u_1\Big(\frac{x}{r}\Big) \tun_{\{x\leq r\}} + \sqrt{1-r} \,u_2\Big(\frac{x-r}{1-r}\Big)\tun_{\{x>r\}}\;,\quad x\in [0,1]\;.
   \]
   Note that this is a function from $[0,1]$ into $\R^2$.\\
   We observe that under $m$, $S$ is a Brownian bridge on $[0,1]$ so that for any $r\in (0,1)$, $S(r) \sim\cN(0,r(1-r))$, and $D$ is an independent normalised Brownian excursion. Let $m_{(0,0)\to (1,y)}$ denote the probability measure on $E$ under which $S$ is a Brownian bridge between $(0,0)$ and $(1,y)$ and $D$ is an independent Brownian excursion. In particular $m_{(0,0)\to (1,0)} = m$. Let also $m_{(0,y)\to(1,0)}$ denote the probability measure on $E$ under which $S$ is a Brownian bridge between $(0,y)$ and $(1,0)$ and $D$ is an independent Brownian excursion.\\
   The law of $T_r(u_1,u_2)$ under the probability measure (on $E\times E$)
   $$ \int_{y\in\R} \frac1{\sqrt{2\pi r(1-r)}} e^{-\frac{y^2}{2r(1-r)}} m_{(0,0)\to (1,y)}(du_1) m_{(0,y)\to(1,0)}(du_2) \,dy\;,$$
   coincides with the law of $u$ under $\mBB(dS)\mBE(dD \mid D(r) =0)$.\\

   \noindent\emph{Step 4: concatenation maps and decomposition of the invariant measure - the discrete case.}
  	For any functions $u_1,u_2$ from $[0,1]$ into $\R^2$ and for any given integer $k\in\{2,\ldots,N-2\}$ we let $T_{N,k}^\Wedge (u_1,u_2)$ be the pair $(v,w)$ defined by
  	\begin{equation*}
  	(v,w)(x) :=\begin{cases}
  		\sqrt{\frac{k-1}{2N}}\, u_1\Big(\frac{x2N}{k-1}\Big)\quad &\mbox{ if } x \leq \frac{k-1}{2N}\;,\\
  		\sqrt{\frac{k-1}{2N}}\, u_1(1) + \Big(\frac{1-|x2N-k|}{\sqrt{2N}} , \frac{1-|x2N-k|}{\sqrt{2N}} \Big)\quad &\mbox{ if } \frac{k-1}{2N}< x\leq \frac{k+1}{2N}\;,\\
  		\sqrt{1-\frac{k+1}{2N}}\, u_2\Big(\frac{x-\frac{k+1}{2N}}{1-\frac{k+1}{2N}}\Big)\quad &\mbox{ if } \frac{k+1}{2N} < x \le 1\;.
  	\end{cases}
  \end{equation*}
  $T_{N,k}^\Vee (u_1,u_2)$ is defined similarly except that $\frac{1-|x2N-k|}{\sqrt{2N}}$ is replaced by $\frac{|x2N-k|-1}{\sqrt{2N}}$.\\
   Let $\nu_{(0,a)\to (n,b)}$ denote the uniform measure on the set of pairs $(\tv,\Tw)$ of lattice paths of length $n$ such that $\tv \ge \Tw$, $\tv(0)=\Tw(0)=a$ and $\tv(n)=\Tw(n)=b$. We consider the rescalings $v=\tv(\cdot n)/\sqrt{n}$ and $w=\Tw(\cdot n)/\sqrt{n}$. The law of $T_{N,k}^\Wedge(u_1,u_2)$ under the probability measure $\nu_{(0,0)\to (k-1,j)}(du_1) \nu_{(0,j)\to (2N-k-1,0)}(du_2)$ coincides with the law of $u$ under $m_N(du \mid \Wedge(k,j))$. The same assertion holds with $\Wedge$ replaced by $\Vee$.\\

   \noindent\emph{Step 5: end of proof.} Let $F:E\to\R$ be some bounded and continuous function and set $G:= F \psi_\varphi$. Note that $m_{N}(\Vee(k,j)))=m_{N}(\Wedge(k,j))$ (by symmetry, or see \eqref{eq:KmG} below). Using the identity in law from Step 4, we deduce that the integral of $F(u)$ against \eqref{Eq:RN} can be written
    \begin{align}
   	&\sqrt{2N}\sum_{k\in [\delta 2N, (1 - \delta) 2N]} \sum_{j\in [-a\sqrt{2N},a\sqrt{2N}]} \frac i{{ 2}} m_{N}(\Vee(k,j)) \Big(\Phi_v\Big(\frac k{2N} \Big)-\Phi_w\Big(\frac k{2N}\Big)\Big)\\
   	&\times  \int_E \sum_{X\in \{\Wedge,\Vee\}}  G(T_{N,k}^X(u_1,u_2)) \nu_{(0,0)\to (k-1,j)}(du_1) \nu_{(0,j)\to (2N-k-1,0)}(du_2)\;.
   \end{align}
	Note that only those $j$ whose parity matches with that of $k-1$ contribute in the sum. Using in turn Lemma \ref{lemma:EstimatesmN} (3) together with the change of variables $r=k/(2N)$ and $y=\sqrt{2} j / \sqrt{2N}$ this equals
	\begin{align}
		&\frac{i}{2} \int_{r=0}^1 \frac{dr}{\sqrt{2\pi r^3(1-r)^3}}  \int_{y=-\sqrt{2} a}^{\sqrt{2} a}\frac{e^{-\frac{y^2}{2r(1-r)}}}{\sqrt{2\pi r(1-r)}}dy \frac{\Big(\Phi_v\Big(\frac{\lfloor r2N\rfloor}{2N} \Big)-\Phi_w\Big(\frac{\lfloor r2N\rfloor}{2N}\Big)\Big)}{\sqrt 2}\\
		&\times J_N\Big(\lfloor r2N\rfloor,\lfloor \frac{\sqrt{2N} y}{\sqrt{2}}\rfloor,G \Big)(1+o(1))
	\end{align}
	where
	\begin{align}
		J_N(k,j,G) &:=\frac12\int_{E\times E}  \sum_{X\in \{\Wedge,\Vee\}} G(T_{N,k}^X(u_1,u_2)) \nu_{(0,0)\to (k-1,j)}(du_1) \nu_{(0,j)\to (2N-k-1,0)}(du_2)\;.
	\end{align}
	Since $|J_N(r,y,G)|$ is bounded uniformly over all $N$, $r$ and $y$, Lemma \ref{lemma:CVJ} below and the Dominated Convergence Theorem ensure that the whole expression we started from converges to	
		\begin{align*}
		&\frac{i}{2}\int_{E\times E} \int_{r=0}^1 \frac{dr}{\sqrt{2\pi r^3(1-r)^3}}  \int_{y=-\sqrt{2} a}^{\sqrt{2} a}\frac{e^{-\frac{y^2}{2r(1-r)}}}{\sqrt{2\pi r(1-r)}}dy \\
		&\times G(T_r(u_1,u_2)) \frac{(\varphi_v(r)-\varphi_w(r))}{\sqrt{2}} m_{(0,0)\to (1,y)}(du_1) m_{(0,y)\to (1,0)}(du_2)\;.
	\end{align*}
   Using the identity in law of Step 3, we deduce that this measure converges as $a\uparrow \infty$ towards
   		\begin{align*}
   	\frac{i}{2} \int_E\int_{r\in [\delta,1-\delta]} \frac{dr}{\sqrt{2\pi r^3(1-r)^3}} \frac{(\varphi_v(r)-\varphi_w(r))}{\sqrt{2}}  G(u) \mBB(S) \mBE(D \mid D(r)=0)\;,
   \end{align*}
	as required.
\end{proof}

\begin{lemma}\label{lemma:CVJ}
	For any bounded continuous function $G:E\to\R$, any $r\in (0,1)$ and $y\in\R$, $J_N(\lfloor r2N\rfloor, \lfloor\sqrt{2N} y/\sqrt{2}\rfloor,G)$ converges as $N\to\infty$ towards
	$$ \int_E G(T_r(u_1,u_2)) m_{(0,0)\to (1,y)}(du_1) m_{(0,y)\to (1,0)}(du_2)\;.$$
\end{lemma}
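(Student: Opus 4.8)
The plan is to reduce the statement to the weak convergence, in $E$, of the pushforward measures obtained by applying the concatenation maps, and then to integrate the bounded continuous $G$. Write $k_N=\lfloor r2N\rfloor$ and $j_N=\lfloor\sqrt{2N}\,y/\sqrt 2\rfloor$, and for $X\in\{\Wedge,\Vee\}$ let $\Lambda_N^X$ be the law on $E$ of $T_{N,k_N}^X(u_1,u_2)$ when $(u_1,u_2)$ is distributed as $\nu_{(0,0)\to(k_N-1,j_N)}(du_1)\,\nu_{(0,j_N)\to(2N-k_N-1,0)}(du_2)$, and let $\Lambda$ be the law of $T_r(u_1,u_2)$ under $m_{(0,0)\to(1,y)}(du_1)\,m_{(0,y)\to(1,0)}(du_2)$. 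Since $J_N(k_N,j_N,G)=\tfrac12\sum_X\int_E G\,d\Lambda_N^X$, it suffices to show that $\Lambda_N^\Wedge$ and $\Lambda_N^\Vee$ both converge weakly to the \emph{same} limit $\Lambda$: the prefactor $\tfrac12$ and the sum over the two contact events then combine to give exactly $\int_E G\,d\Lambda$, the claimed limit.

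First I would establish the underlying invariance principle: under diffusive rescaling with $k_N-1\sim 2Nr$ and $j_N\sim\sqrt N\,y$, the rescaled uniform measure $\nu_{(0,0)\to(k_N-1,j_N)}$ on ordered pairs of lattice bridges converges weakly to $m_{(0,0)\to(1,y)}$, and likewise $\nu_{(0,j_N)\to(2N-k_N-1,0)}\Rightarrow m_{(0,y)\to(1,0)}$; as the two factors are independent, the product measures converge weakly on $E\times E$. This is the analogue, for pinned endpoints at diffusive scale, of Lemma~\ref{lemma:CVInvMeas}: writing $S=(v+w)/\sqrt2$ and $D=(v-w)/\sqrt2$, the ordering constraint is exactly $D\ge 0$, so $D$ is a random walk conditioned to stay nonnegative with pinned ends and converges to a Brownian excursion, while the free component $S$ converges to an independent Brownian bridge with the prescribed endpoints. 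Concretely I would obtain this by tilting the $j=0$ statement through a local central limit theorem, in the spirit of the absolute-continuity arguments of Section~\ref{sec:0}, the normalisation being controlled via the estimate that $\mathbb P^{\otimes 2}(D_N)$ is of order $N^{-2}$ used in Step~1 of the proof of Proposition~\ref{Prop:DiscreteIbPFPair}; alternatively one extends the SDE characterisation of~\cite{Gillet03} to general endpoints. Since these invariance principles hold in the uniform topology, a fortiori they hold in the weaker $L^1$ distance carried by $E$.

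The second ingredient is the \emph{continuous convergence} of the concatenation maps towards $T_r$. I would check that if $(u_1^{(N)},u_2^{(N)})\to(u_1,u_2)$ in $E\times E$ and $k_N/(2N)\to r$, then $T_{N,k_N}^X(u_1^{(N)},u_2^{(N)})\to T_r(u_1,u_2)$ in the $L^1$ distance of $E$. Indeed the outer prefactors $\sqrt{(k_N\mp 1)/(2N)}$ converge to $\sqrt r$ and $\sqrt{1-r}$, the time-compressions converge to $x\mapsto x/r$ and $x\mapsto (x-r)/(1-r)$, and the triangular bump of height $O(N^{-1/2})$ supported on an interval of width $N^{-1}$ inserted at the contact contributes $O(N^{-3/2})$ in $L^1$ and is negligible; the possible single-point discontinuity of $T_r$ at $x=r$ is invisible to the $L^1$ norm. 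Combining the two ingredients through Skorokhod's representation theorem (to upgrade the weak convergence of the product measures to almost sure convergence in $E\times E$), together with the bounded continuity of $G$ on $(E,\|\cdot\|_E)$ and dominated convergence, yields $\int_E G\,d\Lambda_N^X\to\int_E G\,d\Lambda$ for each $X$, and hence the lemma.

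I expect the main obstacle to be the invariance principle of the second paragraph: handling the joint conditioning, namely the ordering constraint \emph{together} with the diffusively displaced endpoints, requires the convergence of a conditioned-to-stay-positive walk to a Brownian excursion and a careful local-CLT bookkeeping of the normalising constants, and the precise matching of the $\sqrt r$ and $\sqrt{1-r}$ scaling factors between the discrete and continuous concatenations must be tracked throughout. By contrast, the continuous convergence of the maps and the final passage to the limit are routine once the $L^1$ topology on $E$ is exploited.
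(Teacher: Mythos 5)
Your proposal is correct and rests on the same two substantive ingredients as the paper's proof: (i) the invariance principle $\nu_{(0,0)\to(k_N-1,j_N)}\Rightarrow m_{(0,0)\to(1,y)}$ (and its mirror image), which the paper also only sketches as ``a slight generalisation of Lemma \ref{lemma:CVInvMeas}'', and (ii) the convergence of the discrete concatenation maps $T^X_{N,k_N}$ to $T_r$, with the observation that the $\Wedge$/$\Vee$ bump is $O(N^{-3/2})$ in $L^1$ so both pushforwards share the same limit. Where you genuinely diverge is the final passage to the limit: the paper stays inside its own framework from Section \ref{sec:ageneral}, checking that the maps $T^X_{N,\lfloor r2N\rfloor}$ are \emph{uniformly Lipschitz} from $\tilde E_N\times\tilde E_N$ (unpinned rescaled ordered paths) into $\tilde E_N$ and that $T^X_{N,k}(\pi_N u_1,\pi_N u_2)\to T_r(u_1,u_2)$ almost everywhere, so that the diagonal-convergence Lemma \ref{Lemma:EquiContCV} applies directly with $F_N=G\circ T^X_{N,k}$ and $\rho_N=\nu\otimes\nu$; you instead invoke the Skorokhod representation theorem and a generalized continuous-mapping argument. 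Both routes work. Yours is more self-contained and handles a merely bounded continuous $G$ without worrying about equicontinuity of $G\circ T^X_{N,k}$, but note that $(E,\|\cdot\|_E)$ is separable and \emph{not} complete, so you must use the Dudley form of the representation theorem (valid since the limit law has separable support) rather than the textbook Polish-space version. The paper's route buys uniformity in the quantifiers (it reuses Theorem \ref{Th:Arzela}-type equicontinuity already needed elsewhere) at the cost of the explicit uniform Lipschitz estimate on the concatenation maps, whose constant degenerates as $r\to 0,1$ — a point you do not need. Your closing caveat about tracking the $\sqrt r$, $\sqrt{1-r}$ normalisations between the discrete and continuous concatenations is well placed: this bookkeeping (endpoint $j_N/\sqrt{k_N-1}$ versus the endpoint of the limiting bridge) is exactly where the argument is most error-prone, and it deserves the careful treatment you flag.
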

\begin{proof}
	     Let $\tilde{E}_N$ be the set of all pairs $(v,w)$ obtained by rescaling ordered lattice paths $\tv \ge \Tw$. Let us point out that, contrary to $E_N$, we do not pin these lattice paths at sites $0$ and $2N$ so that $E_N$ is a strict subset of $\tilde{E}_N$. Let $\pi_N:E\to \tilde{E}_N$ be a projection map as defined in Subsection \ref{sec:ageneral}.
		For $X\in \{\Wedge,\Vee\}$ and $r\in (0,1)$, it is straighforward to check that the maps $T_{N,\lfloor r2N \rfloor}^X$, $N\ge 1$ are uniformly (over $N$) Lipschitz from $\tilde{E}_N\times \tilde{E}_N$ into $\tilde{E}_N$. Of course, the Lipschitz constant depends on $r$ and blows up as $r$ approaches $0$ or $1$. Furthermore, for any given $y\in\R$, $T_{N,\lfloor r2N \rfloor}^X(\pi_N u_1,\pi_N u_2)$ converges to $T_r(u_1,u_2)$ $m_{(0,0)\to (1,y)}(du_1) m_{(0,y)\to (1,0)}(du_2)$-almost everywhere.\\
		A slight generalisation of Lemma \ref{lemma:CVInvMeas} shows the following weak convergences hold as $N\to\infty$
		\begin{align*}
			\nu_{(0,0)\to (\lfloor r2N\rfloor-1,\sqrt{2N} y/\sqrt{2})}(du_1) \to m_{(0,0)\to (1,y)}(du_1) \;,\\
			\nu_{(0,\sqrt{2N} y/\sqrt{2})\to (2N-\lfloor r2N\rfloor-1,0)}(du_2) \to m_{(0,y)\to (1,0)}(du_2)\;.
		\end{align*}
		We have all the ingredients to apply Lemma \ref{Lemma:EquiContCV} and deduce that $J_N(\lfloor r2N\rfloor, \lfloor\sqrt{2N} y/\sqrt{2}\rfloor,G)$ converges to
		$$ \int_E G(T_r(u_1,u_2)) m_{(0,0)\to (1,y)}(du_1) m_{(0,y)\to (1,0)}(du_2)\;.$$
\end{proof}

\begin{lemma}\label{lemma:EstimatesmN}
	Fix $\delta > 0$.
	\begin{enumerate}
		\item $$ \limsup_{N\to\infty} \sum_{k\in [\delta 2N, (1 - \delta) 2N]} \Big(m_N(\Vee(k)) + m_N(\Wedge(k))\Big) \sqrt{2N} < \infty\;.$$
		\item $$ \limsup_{a\to\infty}\limsup_{N\to\infty} \sum_{k\in [\delta 2N, (1 - \delta) 2N]} \sum_{j\notin [-a\sqrt{2N} , a\sqrt{2N} ]}\Big(m_N(\Vee(k,j)) + m_N(\Wedge(k,j))\Big)\sqrt{2N} = 0\;.$$
		\item Uniformly over all $k\in [\delta 2N, (1 - \delta) 2N]$ and all $j \in  [-(2N)^{2/3},(2N)^{2/3}]$ with the same parity as $k-1$, as $N\to\infty$
		\begin{align*}
				m_{N}(\Vee(k,j)) = m_{N}(\Wedge(k,j) = \frac1{2\pi (2N)^2} \Big(\frac{(2N)^2}{k (2N-k)}\Big)^{2} e^{-\frac{j^2}{2N} \frac{(2N)^2}{k(2N-k)}} (1+o(1))\;.
		\end{align*}
	\end{enumerate}
\end{lemma}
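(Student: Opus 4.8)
The plan is to reduce all three estimates to exact enumeration of ordered pairs of $\pm1$ paths together with Stirling/local central limit asymptotics. Write $M:=2N$ and, for $n\ge0$ and $a,b\in\bbZ$ with $n+b-a$ even, let $Z_n(a,b)$ be the number of pairs $(\tv,\Tw)$ of length-$n$ lattice paths with $\pm1$ steps such that $\tv(0)=\Tw(0)=a$, $\tv(n)=\Tw(n)=b$ and $\tv\ge\Tw$. Recording for each path its running number of up-steps turns a $\pm1$ path into a monotone lattice path and the constraint $\tv\ge\Tw$ into weak non-crossing of the two monotone paths; the Lindstr\"om--Gessel--Viennot determinant (equivalently, the reflection principle after a standard shift turning weak into strict non-crossing) then gives, with $u:=(n+b-a)/2$,
\begin{equation*}
Z_n(a,b)=\binom{n}{u}^2-\binom{n}{u-1}\binom{n}{u+1}=\binom{n}{u}^2\,\frac{n+1}{(u+1)(n-u+1)},
\end{equation*}
the last equality being elementary. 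In particular $|\Omega_N|=Z_M(0,0)$.

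Next I would record the decomposition at a contact point. Imposing an upward (resp. downward) corner contact point at $k$ with common height $j$ at $k\pm1$ forces the three central values of both interfaces; the remainder of the configuration then splits into an ordered pair on $\{0,\dots,k-1\}$ running from $0$ to $j$ and an ordered pair on $\{k+1,\dots,M\}$ running from $j$ to $0$. Since the forced central piece is a single configuration in both the peak and the valley case, this yields
\begin{equation*}
m_N(\Wedge(k,j))=m_N(\Vee(k,j))=\frac{Z_{k-1}(0,j)\,Z_{M-k-1}(j,0)}{Z_M(0,0)},
\end{equation*}
which already proves the equality $m_N(\Vee(k,j))=m_N(\Wedge(k,j))$ asserted in (3), with no symmetry argument needed.

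For (3) I would insert the local CLT into this ratio. For $k\in[\delta M,(1-\delta)M]$ and $|j|\le M^{2/3}$ one has, uniformly over such $k,j$ and over $n\in\{k-1,\,M-k-1,\,M\}$, the expansions $\binom{n}{u}2^{-n}=\sqrt{2/(\pi n)}\,e^{-j^2/(2n)}(1+o(1))$ and $\frac{n+1}{(u+1)(n-u+1)}=\frac4n(1+o(1))$; the threshold $M^{2/3}$ is chosen precisely so that the first Edgeworth correction, of order $j^4/n^3=O(M^{-1/3})$, is negligible. Substituting, the powers of $4$ combine to $4^{-2}$, the prefactors collapse to $\frac1{2\pi M^2}\big(\frac{M^2}{k(M-k)}\big)^2$, and the exponents add up to $-\frac{j^2}{M}\frac{M^2}{k(M-k)}$, which is exactly the formula in (3).

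Finally, for (1) and (2) I would replace the precise asymptotics by a single uniform Gaussian bound. Using the Chernoff bound $\binom{n}{u}2^{-n}\le e^{-j^2/(2n)}$, the maximal bound $\binom{n}{u}2^{-n}\le Cn^{-1/2}$, and the lower bound $Z_M(0,0)\ge c\,4^M/M^2$ (itself a consequence of the local CLT), the displayed ratio gives, for all $j$ and all $k\in[\delta M,(1-\delta)M]$,
\begin{equation*}
m_N(\Wedge(k,j))\le C\,\frac{M^2}{(k(M-k))^2}\,\exp\!\Big(-c\,\frac{j^2M}{k(M-k)}\Big).
\end{equation*}
Summing this Gaussian over $j$ (effective width of order $\sqrt{k(M-k)/M}$) gives $m_N(\Wedge(k))\le C\,M^{3/2}(k(M-k))^{-3/2}$, so that $\sqrt M\sum_{k\in[\delta M,(1-\delta)M]}m_N(\Wedge(k))$ is dominated by a Riemann sum converging to $\int_\delta^{1-\delta}(x(1-x))^{-3/2}\,dx<\infty$, which is (1); restricting the $j$-sum to $|j|>a\sqrt M$ leaves a Gaussian tail factor vanishing as $a\to\infty$ uniformly in $N$, which is (2). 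The hard part is precisely this uniform control: carrying the subleading factor $\frac{n+1}{(u+1)(n-u+1)}$ (which is what upgrades $\binom{n}{u}^2$ to the correct $\big(\frac{M^2}{k(M-k)}\big)^2$ power) through the asymptotics, and making the local CLT valid up to the borderline scale $|j|\sim M^{2/3}$ while the sub-Gaussian upper bound holds simultaneously in the bulk and near the extreme heights $|j|\sim n$.
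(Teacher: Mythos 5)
Your proposal is correct and follows essentially the same route as the paper: the exact count of ordered pairs meeting at height $j$ at positions $k\pm1$ via the non-crossing determinant $\binom{n}{u}^2-\binom{n}{u-1}\binom{n}{u+1}$ (the paper invokes Karlin--McGregor, you invoke LGV/reflection), the resulting product formula for $m_N(\Vee(k,j))=m_N(\Wedge(k,j))$, local CLT asymptotics for (3), and uniform sub-Gaussian bounds summed over $j$ and $k$ for (1) and (2). The only (cosmetic) difference is that you factor the determinant as $\binom{n}{u}^2\tfrac{n+1}{(u+1)(n-u+1)}$ and treat the two factors separately, whereas the paper works with a single asymptotic expression involving the rate function $f(x)=(1+x)\ln(1+x)+(1-x)\ln(1-x)$ and the bound $f(x)\ge x^2$; both treatments leave the same routine case analysis near the extreme heights $|j|\sim n$, which you correctly flag.
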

\begin{proof}
	By the Karlin-McGregor formula, see for instance~\cite[Theorem 2.5]{gorin2021lectures}, the number of configurations in $\Omega_N$ that have a contact point at $k$ with an upward (resp.~downward) corner at $k$ and are such that $\ttv(k-1)=j$ equals
	\begin{eqnarray}
		C(k-1,j)C(2N-k-1,j)\;,
	\end{eqnarray}
	where
	\begin{eqnarray}
		C(k,j)=\binom{k}{(k+j)/2}^2-\binom{k}{(k+j)/2-1}\binom{k}{(k+j)/2+1}
	\end{eqnarray}
	is the number of pairs or ordered simple random walk trajectories of length $k$, starting at $0$ and ending at $j$. As a consequence
	\begin{equation}
          \label{eq:KmG}
          m_{N}(\Vee(k,j)) = m_{N}(\Wedge(k,j)) = \frac{C(k-1,j)C(2N-k-1,j)}{C(2N,0)}\;.
        \end{equation}
	We fix $\delta > 0$ and we always work with $k\in [\delta 2N, (1 - \delta) 2N]$ so that $k\to\infty$ as $N\to\infty$. Define the function
	$$ f(x) = (1+x) \ln(1+x) + (1-x)\ln (1-x)\;,\quad x\in [-1,1]\;,$$
	and note that $f(x) = x^2 + O(x^4)$ as $x\to 0$. Note also that it is a convex function so that $f(x) \ge x^2$ over $[-1,1]$. A tedious computation shows that, provided $(k-1) - j \to \infty$ as $N\to\infty$, we have
	$$ C(k-1,j) \sim 2^{2k-1} \frac{4k(k-1)}{\pi ((k-1)^2-j^2)((k+1)^2-j^2)} e^{-(k-1) f(\frac{j}{k-1})}\;,\quad N\to\infty\;.$$
	A straightforward computation then leads to (3), noticing that in this statement $j$ is taken to be negligible compared to $N$ and therefore to $k$.\\
	To establish (1) and (2) we need to deal with values $j$ that can be arbitrarily close to $\pm (k-1)$ and $\pm (2N-k-1)$. Actually the contributions coming from $j\in \{-k+1,k-1,-2N+k+1,2N-k-1\}=:\cJ$ are easy to deal with since, in any of these four cases, either $C(k-1,j)=1$ or $C(2N-k-1,j)=1$. We therefore exclude these four values from now on. The same arguments as above show that there exists some constant $c>0$ such that for all $N$ large enough
	$$ C(k-1,j) \le c 2^{2k-1} \frac{4k(k-1)}{\pi ((k-1)^2-j^2)((k+1)^2-j^2)} e^{-(k-1) f(\frac{j}{k-1})}\;,$$
	and
	$$ C(2N-k-1,j) \le c 2^{4N-2k-1} \frac{4(2N-k)(2N-k-1)}{\pi ((2N-k-1)^2-j^2)((2N-k+1)^2-j^2)} e^{-(2N-k-1) f(\frac{j}{2N-k-1})}\;,$$
	and we deduce that (recall that $f(x) \ge x^2$)
	\begin{align*}
		&\sum_{k\in [\delta 2N, (1 - \delta) 2N]} \sum_{j\notin [-a\sqrt{2N} , a\sqrt{2N} ]\cup \cJ}\Big(m_N(\Vee(k,j)) + m_N(\Wedge(k,j))\Big)\sqrt{2N}\\
		&\lesssim 
		\sum_{k\in [\delta 2N, (1 - \delta) 2N]} \frac1{2N}\sum_{j\notin [-a\sqrt{2N} , a\sqrt{2N} ]}\frac1{\sqrt{2N}} e^{-\frac{j^2}{2N} \frac{1}{\delta(1-\delta)}}\;,
	\end{align*}
	which is bounded uniformly over all $N$ large and its limsup in $N$ goes to $0$ as $a\to\infty$. This establishes (2). The bound (1) can be deduced from (2) and (3).
\end{proof}

\subsection{Tightness}

We aim at checking Assumption \ref{Ass:Tight}. Recall the Sobolev space $H^{-\beta}([0,1],\R)$ introduced in Subsection \ref{Subsec:TightZRP}. We define $H^{-\beta}([0,1],\R^2)$ as the set of all pairs $(v,w)$ such that both $v$ and $w$ belong to $H^{-\beta}([0,1],\R)$. To show that the sequence $(u)_N$ is tight in $\bbD([0,\infty),H^{-\beta}([0,1],\R^2))$, it suffices to prove that the sequences $(v)_N$ and $(w)_N$ separately are tight in $\bbD([0,\infty),H^{-\beta}([0,1],\R))$ for any$\beta > 1$. The proof follows from the Lyons-Zheng decomposition~\cite{LyonsZheng}, exactly like in Subsection \ref{Subsec:TightZRP} for the Zero-Range Process so we do not present the details.\\

We have therefore completed the proof of the following result.

\begin{theorem}\label{Th:Pair}
	The discrete process $(u(t), t\ge 0)$, starting from its stationary measure $m_N$, converges in law to the process $(u(t),t\ge 0)$, starting from $m$, in the space \[\bbD([0,\infty),H^{-\beta}([0,1], \R^2))\] for any given $\beta > 1$.
\end{theorem}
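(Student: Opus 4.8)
The plan is to deduce the statement directly from the black box theorem, Theorem \ref{Th:BlackBox}, by checking that all five of its assumptions hold for the rescaled pair of reflected interfaces $u=(v,w)$, with limiting process the solution of \eqref{eq:theSPDEs} and invariant measure $m$. All the substantive work has in fact been carried out in the preceding subsections, so the proof reduces to collecting these verifications and invoking the general result. First I would record that Assumption \ref{Ass:Charac} is precisely the content of Proposition \ref{prop:Sigma2int}: the resolvent $\cR_\lambda f$ is the unique bounded Lipschitz function satisfying the integration-by-parts identity \eqref{Eq:CharactResolPair} against the class $\cC$ of exponentials $\psi_\varphi$, with the measures $\Sigma^{\psi_\varphi}$ given explicitly in \eqref{Eq:SigmaPair}.

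Next I would address the four assumptions on the approximating sequence. Assumption \ref{Ass:State} holds because $E$ sits inside the Banach space $L^1([0,1],\R^2)$ while each $E_N$ is a finite subset of a finite-dimensional subspace, so a projection $\pi_N$ exists by item (3) of the discussion following that assumption; the weak convergence $m_N\to m$ is Lemma \ref{lemma:CVInvMeas}. Assumption \ref{Ass:Equi} was established in the equicontinuity subsection: the global monotone coupling together with Remark \ref{rem:letus} gives the $L^1$-contraction \eqref{Eq:DecayPair}, which yields $[\cR_\lambda^N f]_{\Lip(E_N)}\le \lambda^{-1}[f]_{\Lip(E_N)}$ by the same computation as for the zero-range process. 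Assumption \ref{Ass:IbPF} holds because, $E_N$ being finite, the identity \eqref{Eq:DirFormResolvN} is automatic with $\psi_\varphi\,\Sigma_N^{\psi_\varphi}(du):=\cL_N\psi_\varphi(u)\,m_N(du)$, and the required weak convergence $\Sigma_N^{\psi_\varphi}\to\Sigma^{\psi_\varphi}$ is exactly Proposition \ref{Prop:DiscreteIbPFPair}. Finally, Assumption \ref{Ass:Tight} is supplied by the tightness subsection, via the Lyons--Zheng decomposition applied componentwise to $v$ and $w$, giving tightness in $\bbD([0,\infty),H^{-\beta}([0,1],\R^2))$ for every $\beta>1$ with continuous limit points.

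Having verified all five assumptions, I would conclude by applying Theorem \ref{Th:BlackBox} to obtain the claimed convergence in law in $\bbD([0,\infty),H^{-\beta}([0,1],\R^2))$. The genuinely hard part of the whole argument is not this final assembly but rather Proposition \ref{Prop:DiscreteIbPFPair}, and within it Step 2 and Step 5: showing that the two distinct discrete reflection terms (the one pushing $\ttv$ up and the one pushing $\ttw$ down) collapse onto the \emph{single} reflection measure $\eta$ in the limit. This is the point that was left open in \cite{EthLab} for a similar model, and it is resolved here through the concatenation-map decomposition of the conditioned invariant measure (Steps 3--4) combined with the Karlin--McGregor asymptotics of Lemma \ref{lemma:EstimatesmN}. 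Once that convergence is in hand, the present theorem follows with no further estimates.
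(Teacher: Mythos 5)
Your proposal is correct and follows exactly the paper's route: the paper's ``proof'' of Theorem \ref{Th:Pair} is precisely the assembly you describe, namely checking Assumptions \ref{Ass:Charac}--\ref{Ass:IbPF} via Proposition \ref{prop:Sigma2int}, Lemma \ref{lemma:CVInvMeas}, the $L^1$-contraction coupling, Proposition \ref{Prop:DiscreteIbPFPair}, and the Lyons--Zheng tightness argument, and then invoking Theorem \ref{Th:BlackBox}. You also correctly locate the genuinely difficult ingredient in the convergence of the two discrete reflection terms to the single measure $\eta$.
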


\begin{remark} \label{rem:k>2} The dynamics of the two reflected
  interfaces can be easily generalised
  \cite{luby2001markov,wilson2004mixing} to $k\ge 2$ reflected
  interfaces, $\ttv_{(1)}\ge \dots\ge \ttv_{(k)}$ pinned at zero at
  their endpoints: $\ttv_{(i)}(0)=\ttv_{(i)}(2N)=0$ for all
  $1\le i\le k$. Assign the flip rates as follows. If interface
  $\ttv_{(i)}$ has an upward (resp. downward) corner at
  $1\le x\le 2N-1$, call $N_i(x)\ge0$ the number of interfaces with
  index $j$ strictly larger (resp. smaller) than $i$, such that
  $\ttv_{(j)}(y)=\ttv_{(i)}(y)$, $y=x,x+1,x-1$. In other words, the
  $j$ interfaces below (resp. above) $\ttv_{(i)}$ have a contact point
  with it at position $x$. Then, the corner of $\ttv_{(i)}$ is
  flipped, together with that of the other $j$ mentioned interfaces,
  with rate $(2N)^2/(2 (N_i(x)+1))$. See Fig. \ref{fig:k>2} for an
  illustration. The uniform measure over the collections of ordered
  interfaces $\ttv_{(i)},1\le i\le k$, is stationary and reversible
  for the process.  Also, as already mentioned above, the existence of
  a monotone global coupling under which the $L^1$ distance contracts
  is known \cite{wilson2004mixing}.  It is very likely that, modulo a
  certain amount of technical work (especially for the ``discrete
  characterization and convergence'', which requires applying the
  Karlin-McGregor formula for $k$ paths) one can use our method to
  show that, for $k$ fixed and $N\to\infty$, the stationary dynamics
  converges to a collection of $k$ reflected SPDEs, that generalises
  \eqref{eq:theSPDEs}.  A much more challenging situation is that
  where $k$ is of order $N$. In this case, the process can be seen as
  a dynamics for rhombus tilings of an $N\times N$ planar domain
  \cite{wilson2004mixing} (the interfaces $\ttv_{(i)}$ representing
  the level lines of the height function), and we expect that the
  scaling limit of the stationary process is a two-dimensional
  stochastic heat equation with additive noise. 
  \end{remark}

\begin{figure}[h]
  \centering
  \includegraphics[width=9cm]{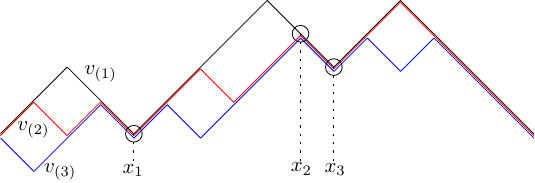}
  \caption{An example of allowed configuration for $k=3$ reflected interfaces. At positions $x_1,x_3$ there is a triple contact with a downward corner, and at $x_2$ a double contact with an upward corner. The rate at which $v_{(3)}$ flips upward at position $x_1$ (together with the other two interfaces) is $(2N)^2/6$, since $N_3(x_1)=2$. On the other hand, $v_{(2)}$ can also flip upward at $x_1$ (together with $v_{(1)}$) and it does so with rate $(2N)^2/4$.}
  \label{fig:k>2}
\end{figure}
  
\section{$\nabla\phi$ interfaces with convex potential in one dimension}

\label{sec:nablaphi}

To conclude, let us present yet another example of interesting Markov process for which our general method should apply. Given a convex function $V:\mathbb R\mapsto \mathbb R$ bounded from below, consider the
one-dimensional $\nabla\phi$  interface model described by the height function $h=(h_i)_{i=0,1,\dots,N}$ with
boundary conditions $h_0=h_N=0$ and with Boltzmann-Gibbs measure given as
\begin{eqnarray}
  m_N(d h)\propto e^{-\sum_{i=1}^N V(h_i-h_{i-1})}\prod_{i=1}^{N-1}d h_i
\end{eqnarray}
(the potential $V$ should increase at least linearly at $\pm\infty$, so that the measure is normalisable).
One can consider two other variants of this model:
\begin{itemize}
\item the one with ``hard wall'', where $m_N$ is restricted to the subspace where $h_i\ge0$ for every $i$;
  
\item the discrete version (with or without the ``hard wall'' constraint $h_i\ge0$) where the heights $h_i$ are restricted to integer values and the Lebesgue measure $\prod_{i=1}^{N-1}d h_i$ is replaced by the counting measure.
\end{itemize}
The following dynamics (Gibbs sampler) is reversible with respect to $m_N$: with rate $N^2$, the height $h_i$, $1\le i\le N-1$ is updated to a new value sampled from $ m_N(\cdot|h_{i\pm1})$, i.e., the equilibrium measure conditioned to the value of the heights in neighboring positions.

Let us point out that the dynamics with continuous heights and hard wall constraint can be seen as a Poissonian version of the dynamics considered by Funaki and Olla~\cite{FunaOlla}.

For the dynamics with continuous heights and no hard-wall constraint,
\cite{caputo2022spectral} identified precisely the spectral gap and
proved the occurrence of total variation cut-off, under the assumption
that $V$ is of at most polynomial growth at infinity. For the case
with discrete heights and hard-wall constraint, the mixing time was
identified in \cite[Sec. 3]{martinelli2009mixing}, up to a global
prefactor.

We believe that the method of the present paper allows to prove that,
under the same assumptions on $V$, the stationary process of suitably
rescaled height fluctuations
$\tilde h_N(t,x)=N^{-1/2}h_{\lfloor x N\rfloor}(t), x\in[0,1],t\ge0$
converges in distribution to either the stochastic heat equation with
additive noise (if the hard wall constraint is absent) or the
Nualart-Pardoux equation \eqref{eq:NP} (if the hard wall constraint is
present). We do not expect the actual proof to be harder than or
significantly different from that of either the zero-range process or
of the pair of reflected interfaces, given above, but we preferred not
to work it out, in order to keep this work within a reasonable length.

Let us just mention that it is easy to check that, because of the
assumed convexity of $V$, these interface dynamics admit a global
monotone coupling, and that under this coupling, the $L^1$ distance
between configurations contracts (this holds both with and without
hard-wall constraint).  This allows to obtain equicontinuity of the
sequence of resolvants. The technical work that is left concerns the
discrete integration by parts, that involves arguments based on the
local limit theorem for random walk (and for random walks conditioned
to be positive) \cite{bryn2006functional,caravenna2008invariance}.

\subsection*{Data availability} We do not analyse or generate any datasets.

\subsection*{Competing interests} The authors have no competing interests to declare that are relevant to the content of this article.

\bibliographystyle{Martin}
\bibliography{library}

\end{document}